\documentclass[12pt]{amsart}
\usepackage[left=2.5cm,top=2.5cm,right=2.5cm]{geometry}
\usepackage{verbatim}
\usepackage{amsmath}
\usepackage{amssymb}
\usepackage{amscd}
\usepackage{color}
\usepackage{url}
\usepackage{graphicx}
\usepackage[
backref,
pdfauthor={Farbod Shokrieh}, 
]{hyperref}
\usepackage[
alphabetic,
backrefs,
msc-links,
nobysame,
lite,
]{amsrefs} 


\numberwithin{equation}{section}

\newtheorem{theorem}[equation]{Theorem}
\newtheorem{lemma}[equation]{Lemma}

\newtheorem{proposition}[equation]{Proposition}
\newtheorem{corollary}[equation]{Corollary}
\newtheorem{definition}[equation]{Definition}
\newtheorem{example}[equation]{Example}

\theoremstyle{remark}
\newtheorem{remark}[equation]{Remark}



\renewcommand{\O}{{\mathcal O}}

\newcommand{\C}{{\mathcal C}}
\newcommand{\D}{{\mathcal D}}

\newcommand{\Cf}{{\mathfrak C}}
\newcommand{\Bf}{{\mathfrak B}}
\newcommand{\Df}{{\mathfrak D}}
\newcommand{\Lf}{{\mathfrak L}}

\newcommand{\cb}{{\mathbf c}}

\newcommand{\Cb}{{\mathbf C}}


\def\RR{{\mathbb R}}
\def\ZZ{{\mathbb Z}}


\newcommand{\Div}{\operatorname{Div}}

\newcommand{\Jac}{\operatorname{Jac}}

\newcommand{\Prin}{\operatorname{Prin}}

\newcommand{\Pic}{\operatorname{Pic}}

\newcommand{\outdeg}{{\operatorname{outdeg}}}

\newcommand{\Vol}{{\rm Vol}}
\newcommand{\indeg}{{\rm indeg}}
\newcommand{\supp}{{\rm Supp}}

\begin{document}

\title[Canonical representatives for divisor classes on tropical curves]
    {Canonical representatives for divisor classes on tropical curves
    and the Matrix-Tree Theorem}


\subjclass[2010]{05A19, 05C25, 05E45, 14T05}
\thanks{Matthew Baker and Farbod Shokrieh were partially supported by NSF grant DMS-1201473. Greg Kuperberg was partially supported by NSF grant CCF-1013079.  We enthusiastically thank Spencer Backman and the anonymous referee for their helpful comments.}

\author{Yang An}
\email{yangan@math.columbia.edu}
\address{Columbia University}
\author{Matthew Baker}
\email{mbaker@math.gatech.edu}
\address{Georgia Institute of Technology}
\author{Greg Kuperberg}
\email{greg@math.ucdavis.edu}
\address{University of California, Davis}
\author{Farbod Shokrieh}
\email{farbod@math.cornell.edu}
\address{Cornell University}

\begin{abstract}
Let $\Gamma$ be a compact tropical curve (or \emph{metric graph}) of genus
$g$.  Using the theory of tropical theta functions, Mikhalkin and Zharkov
proved that there is a canonical effective representative (called a \emph{break divisor}) for each linear
equivalence class of divisors of degree $g$ on $\Gamma$.
We present a new combinatorial proof of the fact that there is a unique
break divisor in each equivalence class, establishing in the process an ``integral'' version of this result which is of
independent interest.
As an application,
we provide a ``geometric proof'' of (a dual version of) Kirchhoff's celebrated Matrix-Tree Theorem.
Indeed, we show that each weighted graph model $G$ for $\Gamma$ gives rise
to a canonical polyhedral decomposition of the $g$-dimensional real torus
$\Pic^g(\Gamma)$ into parallelotopes $C_T$, one for each spanning tree $T$ of
$G$, and the dual Kirchhoff theorem becomes the statement that the volume of
$\Pic^g(\Gamma)$ is the sum of the volumes of the cells in the decomposition.
\end{abstract}

\maketitle


\section{Introduction}
\label{sec:Introduction}

Let $\Gamma$ be a compact tropical curve (or \emph{metric graph}) of
genus $g$.  There is a canonical continuous map $\pi : \Div^g_+(\Gamma)
\to \Pic^g(\Gamma)$ taking an effective divisor of degree $g$ on $\Gamma$
to its linear equivalence class.  Using tropical theta functions, Mikhalkin
and Zharkov \cite{MK08} showed that there is a canonical continuous section
$\sigma$ to the map $\pi$.  In particular, every divisor class of degree
$g$ has a canonical effective representative.  (This is in sharp contrast
to the situation for compact Riemann surfaces, where the analogous map
$\pi$ does not admit such a section.)  Although not stated explicitly in that paper, one can deduce easily from the results in
\cite{MK08} that the image of $\sigma$ is the set of \emph{break divisors}
in $\Div^g_+(\Gamma)$ (combine Theorem 6.5, Corollary 6.6, and Lemma 8.3 from \cite{MK08}).

\medskip

In this paper, we study break divisors in detail
and give some applications.  One application is a new combinatorial
proof of the existence of the section $\sigma$ which does not make use of
tropical theta functions; this proof has the advantage that it yields an integral version of the Mikhalkin--Zharkov theorem.
Another application is a ``geometrization'' of Kirchhoff's
celebrated Matrix-Tree Theorem: we show that for each weighted graph model
$G$ for $\Gamma$, there is a canonical polyhedral decomposition of the
$g$-dimensional real torus $\Pic^g(\Gamma)$ into parallelotopes $C_T$,
one for each spanning tree $T$ of $G$; from this point of view Kirchhoff's
theorem (or rather its \emph{matroid dual}) becomes the statement that
the volume of $\Pic^g(\Gamma)$ is the sum of the volumes of the cells in
the decomposition.

\medskip

In order to define break divisors, it is convenient to fix a model $G$ for $\Gamma$.
For each spanning tree $T$ of $G$, let $\Sigma_T$ (resp. $\Sigma_T^\circ$) be the image of the canonical map
$\prod_{e \not\in T} {\bar e} \to \Div^g_+(\Gamma)$ (resp. $\prod_{e \not\in T} e^\circ \to \Div^g_+(\Gamma)$) sending
$(p_1,\ldots,p_g)$ to $(p_1)+ \cdots + (p_g)$.  (Here $\bar{e}$ denotes a closed edge and $e^\circ$ denotes the corresponding open edge with the
endpoints removed.)
We call $\Sigma := \bigcup_T \Sigma_T$ the set of \emph{break divisors} on $\Gamma$.
It does not depend on the choice of the model $G$.
Our first main result is a new combinatorial proof of the following theorem of Mikhalkin and Zharkov:

\begin{theorem}
\label{thm:mainthm1}
Every degree $g$ divisor on $\Gamma$ is linearly equivalent to a unique break divisor.
\end{theorem}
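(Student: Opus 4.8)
The plan is to fix a base point and use the tropical Abel--Jacobi map to transport the problem into linear algebra on the Jacobian torus. Concretely, I would identify $\Pic^g(\Gamma) \cong \Pic^0(\Gamma) \cong H_1(\Gamma,\RR)/H_1(\Gamma,\ZZ)$, a real torus of dimension $g$, and write $V = H_1(\Gamma,\RR)$ and $\Lambda = H_1(\Gamma,\ZZ)$. The first task is to make $\pi$ restricted to each $\Sigma_T$ completely explicit. For a spanning tree $T$ the fundamental cycles $\{\gamma_e : e\notin T\}$ form a $\ZZ$-basis of $\Lambda$, and a direct computation shows that the map $\prod_{e\notin T}\bar{e}\to V/\Lambda$ sending $(p_e)_e$ to the class of $\sum_e (p_e)$ is the reduction modulo $\Lambda$ of an affine map whose linear part sends the arclength coordinate on $e$ to an explicit tangent vector $w_e \in V$, with $\{w_e : e\notin T\}$ a basis of $V$. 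Thus $C_T := \pi(\Sigma_T)$ is the image of a $g$-dimensional box, and Theorem~\ref{thm:mainthm1} becomes the single assertion that the family $\{C_T\}$ tiles $V/\Lambda$, i.e. that $\pi|_\Sigma$ is a bijection onto $\Pic^g(\Gamma)$.

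Uniqueness splits into two sub-statements. First, $\pi$ is injective on each open cell $\Sigma_T^\circ$: if $D,D'\in\Sigma_T^\circ$ are linearly equivalent then $D-D' = \divisor(f)$ for a rational function $f$ whose divisor is supported on the open non-tree edges, is balanced at every vertex, and is affine along every edge of the (spanning, connected) tree $T$; using that the slopes are integers, a maximum-principle argument forces $f$ to be constant, so $D = D'$. Equivalently, since $\{w_e\}$ is a basis, $\Phi_T := \pi|_{\Sigma_T}$ is a local homeomorphism and injectivity on the open box is immediate from the explicit form above. The harder sub-statement is global injectivity: that two break divisors coming from \emph{different} trees are never equivalent. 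In the Jacobian picture this is exactly the assertion that the open parallelotopes $C_T^\circ$ are pairwise disjoint and that no nontrivial $\Lambda$-translate of one open box meets another --- a statement genuinely about how the vectors $w_e$ and the fundamental cycles of distinct spanning trees sit inside $V/\Lambda$. I expect this to be the main obstacle.

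For existence I would first invoke tropical Riemann--Roch: a degree-$g$ divisor $D$ satisfies $r(D)\ge \deg(D)-g = 0$, so every class is effective. It then remains to transport an effective representative onto the non-tree edges of some spanning tree, which I would carry out by a reduction/chip-transport procedure along a tree. Alternatively, existence follows formally once $\pi|_\Sigma$ is understood as a proper local homeomorphism off a lower-dimensional locus: $\Sigma$ is compact, so $\pi(\Sigma)$ is closed, and a degree (covering-number) count on the dense open locus where $\pi$ is a local homeomorphism upgrades the established injectivity to surjectivity.

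To make both the disjointness and the gap-free covering rigorous in a purely combinatorial way --- and to extract the Matrix--Tree application --- I would prove an \emph{integral version} first: that the lattice points of $\Sigma$ (break divisors supported at the vertices of a fine enough subdivision of $G$) map bijectively onto the finite group $\Pic^g$ of the combinatorial model, a statement provable by a finite combinatorial argument and compatible with refinement of the model. Passing to the real statement is then a density-plus-continuity/compactness argument over finer and finer subdivisions. The single step I expect to fight hardest with is the mutual disjointness of the cells $C_T^\circ$, equivalently the cross-tree injectivity of $\pi|_\Sigma$; everything else is either standard (Abel--Jacobi, Riemann--Roch) or a routine computation.
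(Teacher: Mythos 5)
Your outline correctly identifies the skeleton (Abel--Jacobi, cells $C_T$, injectivity on open cells, an integral version), but it leaves unproved exactly the two points that constitute the theorem, and the devices you propose for them do not close the gaps. For surjectivity, the ``degree count on the dense open locus'' can only work once you know the open cells $C_T^\circ$ are pairwise disjoint \emph{and} that their volumes fill up $\Pic^g(\Gamma)$ --- but the latter is the dual Matrix--Tree theorem (Theorem~\ref{thm:DualKirchhoff}), which the paper \emph{derives from} Theorem~\ref{thm:mainthm1}; that route is circular unless you import an independent Cauchy--Binet computation (cf.\ Remark~\ref{CBRemark}). Worse, no covering-degree argument addresses uniqueness at cell boundaries: a piecewise-linear surjection of $g$-tori that is injective on a dense open set can still identify boundary points, and break divisors with a chip at a vertex lie in several closed cells $C_T$ simultaneously, so cross-tree uniqueness there is a genuine assertion, not bookkeeping. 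Your alternative, integral-then-density, defers rather than supplies the content: the ``finite combinatorial argument'' for the integral statement is the heart of the whole problem (it is the paper's Theorem~\ref{thm:eqiv_q_orient}), and even granting it, injectivity does not pass to limits --- a continuous map injective on a dense subset need not be injective --- so ``density plus compactness'' cannot by itself give uniqueness for break divisors with irrational coordinates.

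Two remarks on how the paper fills these holes, since they are the ideas missing from your sketch. First, the step you flag as the main obstacle --- disjointness of the \emph{open} cells --- has a one-line solution: any $D \in \Sigma_T^\circ$ is $q$-reduced for \emph{every} $q \in \Gamma$, so the least-action principle (Lemma~\ref{LeastActLemma}) gives $|D| = \{D\}$ (Lemma~\ref{lem:inj}); this kills within-cell and cross-tree equivalences on the open locus simultaneously, with no lattice geometry needed. The real difficulty is the boundary, and the paper handles it by trading degree-$g$ break divisors for degree-$(g-1)$ $q$-orientable divisors via $D \mapsto D - (q)$ (Lemma~\ref{lem:break_vs_qorient}), reducing Theorem~\ref{thm:mainthm1} to Theorem~\ref{thm:mainthm2}. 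There, existence comes from Hakimi's characterization ($D$ is orientable iff $\chi(S,D) \ge 0$ for all $S$, Theorem~\ref{thm:orient}) together with a terminating chip-firing algorithm driven by submodularity of $\chi(\cdot,D)$ (unique minimal and maximal elements of $\mathbf{S}(D)$ and $\mathbf{S}_q(D)$), and uniqueness is the short computation $0 \le \chi(X,D_1) \le \chi(X,D_2) - e(X,\bar{X}) = -\chi(\bar{X},D_2) \le 0$ on the minimum set $X$ of $f$, which shows two equivalent orientable divisors cannot both be $q$-orientable for the same $q$. This orientation machinery --- which also yields the integral statement you want as a corollary rather than as an input --- is precisely what your proposal would need to invent to become a proof.
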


Since $\Sigma$ is compact, and a continuous bijection between compact Hausdorff spaces is a homeomorphism, the theorem implies that
$\pi$ induces a homeomorphism from $\Sigma$ onto its image.  The canonical section $\sigma$ is the inverse of this homeomorphism.

\medskip

Our combinatorial proof of Theorem~\ref{thm:mainthm1}
is based on a study of \emph{orientable divisors}.
If $\O$ is a (not necessarily acyclic) orientation of $\Gamma$, we define a corresponding divisor $D_\O$ of degree $g-1$ by the formula
\[
D_\O = \sum_{p \in \Gamma} \left( {\rm indeg}_\O(p) - 1 \right)(p).
\]
We call divisors of this form \emph{orientable}.  We will show by a constructive procedure that every divisor of degree $g-1$ is linearly equivalent to an orientable divisor.
More precisely, fix $q \in \Gamma$.  We say that an orientation is \emph{$q$-connected} if there is a directed path from $q$ to $p$ for every $p \in \Gamma$.
A divisor of the form $D_\O$, with $\O$ a $q$-connected orientation, is called \emph{$q$-orientable}.  We will prove:

\begin{theorem} \label{thm:mainthm2}
Fix $q \in \Gamma$.  Every divisor of degree $g-1$ on $\Gamma$ is linearly equivalent to a unique $q$-orientable divisor.
\end{theorem}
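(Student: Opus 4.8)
The plan is to fix a model and prove the two assertions---existence and uniqueness---separately, reducing everything to the combinatorics of the finite graph $G$ underlying the model. By passing to a common refinement I may assume that $q$, the support of the given divisor, and (later) the relevant level sets of an interpolating function are all vertices or subcomplexes of a single model $G$. Throughout I use three elementary facts: the divisor of a piecewise-linear function with integer slopes is the graph Laplacian of that function and is supported on vertices; reversing a single edge oriented $v\to w$ changes $D_\O$ by $(v)-(w)$; and reversing all edges across a cut $(S,S^c)$ changes $D_\O$ by the principal divisor $\operatorname{div}(\mathbf 1_S)$, hence preserves the linear equivalence class.

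For existence I would argue constructively in three steps. First, refine the model so that a chosen degree $g-1$ representative $D$ of the given class is supported on vertices. Second, I realize $[D]$ as the class of an orientable divisor: fixing any reference orientation $\O_0$ and flipping edges one at a time shows that the set of classes $\{[D_\O]\}$ equals $[D_{\O_0}]+\langle\,[(v)-(w)] : vw\in E\,\rangle$, while $[D]-[D_{\O_0}]$ is a degree-zero divisor supported on vertices and is therefore an integer combination of the generators $[(v)-(w)]$; hence $D\sim D_\O$ for some orientation $\O$. Third, I make $\O$ a $q$-connected orientation without leaving its class: if $\O$ is not $q$-connected, the set $U$ of vertices reachable from $q$ is a proper subset containing $q$, every edge across the cut $(U,U^c)$ is oriented into $U$, and reversing this directed cut fixes the class while strictly enlarging the reachable set. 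Since $\#U\le\#V$, this terminates at a $q$-connected orientation $\O^{*}$ with $D\sim D_{\O^{*}}$.

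For uniqueness, suppose $\O$ and $\O'$ are both $q$-connected with $D_\O\sim D_{\O'}$, and write $D_\O-D_{\O'}=\operatorname{div}(f)$ for a piecewise-linear $f$ with integer slopes. I want to force $f$ to be constant. Refine the model so that the maximal level set $F=\{f=\max f\}$ and the minimal level set $F'=\{f=\min f\}$ are subcomplexes. On $F$ every edge leaving $F$ has strictly negative outgoing slope, and the bookkeeping
\[
b_\O-b_{\O'}=\sum_{v\in F}\operatorname{div}(f)(v)=\sum_{vw\in\partial F}m_{vw}\le -\#\partial F,
\]
where $b_\O$ counts the boundary edges oriented into $F$ by $\O$ and $m_{vw}\le -1$ is the slope out of $F$, combines with $0\le b_\O,b_{\O'}\le\#\partial F$ to force $b_\O=0$. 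Thus no edge enters $F$ under $\O$, so $q$-connectedness of $\O$ forces $q\in F$. The symmetric computation at $F'$, where the outgoing slopes are $\ge 1$, gives the analogous vanishing for $\O'$, so $q$-connectedness of $\O'$ forces $q\in F'$. If $f$ were nonconstant then $F$ and $F'$ would be disjoint, contradicting $q\in F\cap F'$; hence $f$ is constant and $D_\O=D_{\O'}$.

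The main obstacle is the uniqueness argument, and specifically the extremal-set bookkeeping above: the inequalities become rigid only because the internal edges of $F$ contribute the same amount to $\sum_{v\in F}\indeg(v)$ for both orientations, and it is precisely the resulting equality $b_\O=0$ that lets me invoke $q$-connectedness to trap $q$ in the maximum set, and symmetrically in the minimum set. The remaining difficulties are metric rather than conceptual: ensuring that the extrema of $f$ are attained on honest subcomplexes (so that ``edges entering $F$'' is meaningful), handling edges on which $f$ is flat at an extreme value, and verifying that the constructive reversals in the existence step genuinely terminate. I expect these to be routine once the model has been suitably refined.
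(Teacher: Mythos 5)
Your uniqueness argument is sound, and it is essentially the paper's own: your boundary count $b_\O$ is exactly the quantity $\chi(F,D_\O)=\deg(D_\O|_F)+\chi(F)$ used in the paper, and your squeeze at the extremal level sets is the paper's chain $0\le\chi(X,D_1)\le\chi(X,D_2)-e(X,\bar{X})=-\chi(\bar{X},D_2)\le 0$ at the minimum set of $f$ (the paper needs only the minimum set, concluding that no basepoint $q$ can serve both divisors, but this is cosmetic). The existence half, however, has two genuine gaps. First, your step~2 is circular: relative to a reference orientation $\O_0$, any orientation $\O$ agrees or disagrees on each edge, so $D_\O-D_{\O_0}=\sum_{e\in S}\bigl((t_e)-(h_e)\bigr)$ with coefficients in $\{0,1\}$ only. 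Edge flips therefore establish just the inclusion $\{[D_\O]\}\subseteq[D_{\O_0}]+\langle[(v)-(w)]\rangle$; the reverse inclusion --- that a class whose difference from $[D_{\O_0}]$ is an \emph{arbitrary integer} combination of the generators contains some $D_\O$ --- is precisely the existence statement you are proving. (If the edge $vw$ is oriented $w\to v$, a flip produces $(w)-(v)$, the wrong sign, and an edge cannot be flipped twice.) This surjectivity is the real combinatorial content; the paper gets it from Hakimi's characterization (Theorem~\ref{thm:orient}: $D$ is orientable iff $\deg D=g-1$ and $\chi(S,D)\ge 0$ for all $S$) together with a terminating firing procedure driven by submodularity of $\chi(\cdot,D)$ (Theorem~\ref{thm:eqiv_orient}).

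Second, your ``elementary fact'' that reversing a directed cut changes $D_\O$ by $\divisor(\mathbf{1}_S)$ is false on a metric graph with general edge lengths: $\mathbf{1}_S$ has slope $1/\ell(e)$ on a cut edge of length $\ell(e)$, which is not an integer, so $\mathbf{1}_S\notin R(\Gamma)$ and the cut reversal genuinely changes the class. Concretely, take $u,v$ joined by two edges of lengths $1$ and $\pi$, both oriented $u\to v$; reversing this directed cut changes $D_\O$ by $2(u)-2(v)$, whose image in $\Jac(\Gamma)\cong\RR/(1+\pi)\ZZ$ is $\mp 2\ne 0$, hence not principal. So your steps~2 and~3, even once repaired combinatorially, prove at best the unit-length statement (Theorem~\ref{thm:mainthm1bis}), not Theorem~\ref{thm:mainthm2}. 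To fix step~3 one must fire \emph{partially}, moving each chip on a cut edge a distance equal to the minimal gap across the cut; the divisor then acquires support at interior points whose positions depend on the real edge lengths, the required model is a moving target, and termination is no longer a finite count of vertices. The metric difficulties you defer as routine are exactly what the paper's semi-model machinery and the strictly decreasing sets $S_{k+1}\subsetneq S_k$ in the proofs of Theorems~\ref{thm:metric equiv} and~\ref{thm:qconn_unique} are built to handle; without some substitute for that bookkeeping, your constructive procedure is not known to terminate.
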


There is a close connection between break divisors and $q$-orientable divisors.  Indeed, we will see that the map sending a degree $g$ divisor $D$ to the
degree $g-1$ divisor $D - (q)$ induces a bijection between break divisors and $q$-orientable divisors.
Using this observation, one deduces in a completely formal way that Theorem~\ref{thm:mainthm1} and Theorem~\ref{thm:mainthm2} are in fact equivalent.

\medskip

One advantage of our approach to Theorems \ref{thm:mainthm1} and \ref{thm:mainthm2} is that it enables us to prove an \emph{integral version} of
Theorem~\ref{thm:mainthm1} for finite unweighted graphs $G$ (or equivalently, finite weighted graphs in which all edges have length $1$).
Indeed, if we define an \emph{integral break divisor} to be a break divisor supported on the vertices of $G$, then
the constructive procedure furnished by our proof of Theorem~\ref{thm:mainthm2} shows:

\begin{theorem} \label{thm:mainthm1bis}
Every degree $g$ divisor supported on the vertices of $G$ is linearly equivalent to a unique integral break divisor.
\end{theorem}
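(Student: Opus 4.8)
The plan is to extract Theorem~\ref{thm:mainthm1bis} from the \emph{constructive} proof of Theorem~\ref{thm:mainthm2}, by observing that when the input divisor is supported on the vertices of $G$, the construction never leaves the class of vertex-supported divisors. Uniqueness requires no new argument: an integral break divisor is in particular a break divisor, so two linearly equivalent integral break divisors would contradict the uniqueness half of Theorem~\ref{thm:mainthm1}. Hence only existence must be established.

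For existence I would fix $q$ to be a \emph{vertex} of $G$ and use the bijection $D \mapsto D - (q)$ to reduce to degree $g-1$. It then suffices to prove the integral analogue of Theorem~\ref{thm:mainthm2}: every degree $g-1$ divisor supported on the vertex set of $G$ is linearly equivalent to $D_\O$ for some $q$-connected orientation $\O$ of $G$ itself. Granting this, such a $D_\O = \sum_p (\indeg_\O(p) - 1)(p)$ is automatically supported on the vertices and is $q$-orientable, so by the bijection $D_\O + (q)$ is an integral break divisor; starting from a degree $g$ divisor $D'$ supported on the vertices and applying this to $D = D' - (q)$ produces an integral break divisor equivalent to $D'$.

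To prove the integral analogue I would run the algorithm underlying Theorem~\ref{thm:mainthm2} on a vertex-supported divisor and track integrality at every step. That algorithm first replaces the divisor by its $q$-reduced representative and then reads a $q$-connected orientation off the reduced divisor. The key observation is that the chip-firing moves organized by Dhar's burning algorithm fire \emph{sets of vertices} of $G$ and transport chips by whole edge-lengths in the unit-length model; they therefore send vertex-supported divisors to vertex-supported divisors, and the linear equivalences they realize are integral and so valid on $\Gamma$. The $q$-reduced representative is thus again supported on the vertices, and the orientation extracted from it directs each edge of $G$ as a single unit, yielding a genuine $q$-connected orientation $\O$ of $G$.

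The main obstacle will be verifying that the orientation-reading step returns an orientation of $G$ rather than of a proper subdivision: one must confirm that a $q$-reduced divisor supported on the vertex set never forces the orientation to ``reverse direction'' in the interior of an edge (which would place mass at a non-vertex and demand a subdivision). This is precisely where the vertex-support hypothesis is used, and checking it carefully is the crux; once done, the argument both proves the theorem and yields an explicit algorithm for computing the integral break divisor in a given class.
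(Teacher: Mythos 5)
Your overall architecture (reduce uniqueness to Theorem~\ref{thm:mainthm1}, fix a vertex $q$, pass to degree $g-1$ via $D \mapsto D-(q)$ and Lemma~\ref{lem:break_vs_qorient}, and prove an integral analogue of Theorem~\ref{thm:mainthm2} on $G$ itself) is exactly the paper's route: its proof of Theorem~\ref{thm:integralbreakdivisorthm} invokes Theorem~\ref{thm:eqiv_q_orient}, which produces an integral $q$-orientable divisor, together with the break/$q$-orientable bijection. But your existence step contains a genuine gap: you assert that the algorithm behind Theorem~\ref{thm:mainthm2} ``first replaces the divisor by its $q$-reduced representative and then reads a $q$-connected orientation off the reduced divisor.'' That is not the paper's algorithm, and the step fails as stated, because the $q$-reduced representative of a degree $g-1$ class need not be $q$-orientable at all --- no reading-off procedure can succeed. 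Concretely, let $G$ have vertices $q,a,b$, one edge joining $q$ to $a$, and three parallel edges joining $a$ to $b$, so $g=2$. The divisor $D=(b)$ has degree $g-1$ and is $q$-reduced (every closed set avoiding $q$ has a boundary point $p$ with $D(p)<\outdeg_X(p)$), yet it is not $q$-orientable: any orientation $\O$ with $D_\O=(b)$ forces $\indeg_\O(q)=1$, i.e., the unique edge at $q$ points \emph{into} $q$, so $\O$ cannot be $q$-connected. Indeed $\chi(\{a,b\},D)=1+2-3=0$, which by Proposition~\ref{prop:q_orient} obstructs $q$-orientability; the unique $q$-orientable divisor in the class is $(a)+(b)-(q)$, reached only after a \emph{further} firing move away from the reduced representative. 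Equivalently, at degree $g$: the $q$-reduced divisor $(b)+(q)$ is not a break divisor (every spanning tree contains the edge $qa$, so break divisors consist of two points on the $ab$ edges), while the break divisor in its class is $(a)+(b)$.

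The difficulty you flagged as the crux --- that the orientation might ``reverse direction'' inside an edge and force a subdivision --- is not the real obstruction in the unit-length setting; the real obstruction is the mismatch above between $q$-reduced and $q$-orientable divisors. The paper avoids reduced divisors entirely here: Theorem~\ref{thm:eqiv_q_orient} is proved by a direct chip-firing algorithm driven by the submodular function $\chi(\cdot,D)$ --- while some $\emptyset \neq S \subsetneq V(G)$ has $\chi(S,D)<0$, fire the complement of the unique \emph{minimal} element of $\mathbf{S}(D)$ (Corollary~\ref{cor:minimal1}); then, while some $S \subseteq V(G)\setminus\{q\}$ has $\chi(S,D)=0$, fire the complement of the unique \emph{maximal} element of $\mathbf{S}_q(D)$ (Lemma~\ref{lem:maximal}) --- with termination controlled by monotonicity of $(\chi_D, S_k)$. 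Since this algorithm only ever fires subsets of $V(G)$, integrality and vertex-support are automatic, which is precisely the point you wanted; your instinct that ``firing sets of vertices preserves vertex-support'' is correct, but it must be attached to this $\chi$-driven algorithm (or another valid one), not to the passage to the $q$-reduced representative.
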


As a consequence of Theorem~\ref{thm:mainthm1bis}, the set ${\rm Pic}^g(G)$ of divisors of degree $g$ on $G$ modulo linear equivalence is canonically in bijection with the set of integral break divisors.  In particular, the number of integral break divisors is equal to the number of spanning trees of $G$.

\medskip

We now turn to a more detailed discussion of the connection between break divisors, polyhedral decompositions of $\Pic^g(\Gamma)$, and Kirchhoff's Matrix-Tree Theorem.
Let $G$ be a weighted graph with associated metric graph $\Gamma$.
Recall that $\Sigma := \bigcup_T \Sigma_T$, where the union is over all spanning trees $T$ of $G$.
If we define $\C_T = \pi(\Sigma_T)$, then clearly
$\Pic^g(\Gamma) = \bigcup_T \C_T$ by Theorem~\ref{thm:mainthm1}.
It turns out that each cell $\C_T$ is a parallelotope and that if $T \neq T'$ then the relative interiors of $\C_T$ and $\C_{T'}$ are disjoint.
Thus $\Pic^g(\Gamma)$ has a polyhedral decomposition depending only on the choice of the model $G$.
It is not hard to check that if $G'$ is a refinement of $G$, then the cell decomposition coming from $G'$ is a refinement of the cell decomposition coming from $G$.

\medskip

\begin{example}
Let $\Gamma$ be the metric graph consisting of two vertices joined by three edges of lengths $2$, $1$, and $2$, respectively. Fix a model $G$ for $\Gamma$ in which all edge lengths have length $1$.  In Figure~\ref{ex1} we have listed all spanning trees of $G$ and the corresponding cell decomposition of $\Pic^2(\Gamma)$. We have labeled the center of each cell with its corresponding break divisor.

\begin{figure}[h!]
\begin{center}
    \includegraphics[width=0.5\textwidth]{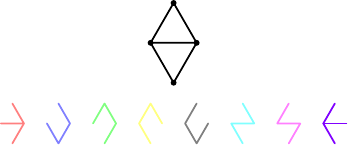}
\end{center}
  \vspace{+20pt}
\begin{center}
    \includegraphics[width=0.8\textwidth]{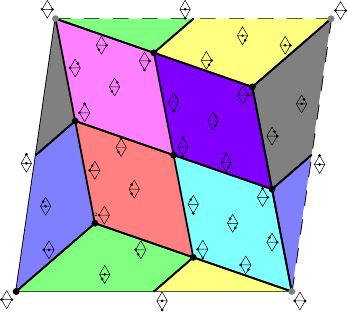}
\end{center}
  \caption{A fixed model for the metric graph $\Gamma$ and the corresponding decomposition of $\Pic^2(\Gamma)$.}
  \label{ex1}
\end{figure}

\end{example}

Since the canonical map from $\Sigma = \bigcup \Sigma_T$ to $\Pic^g(\Gamma)$ is a homeomorphism
and $\C_T$ is the projection to $\Pic^g(\Gamma)$ of the cube $\Sigma_T$,
it follows easily that if $T,T'$ are spanning trees of $G$ then the dimension of
$\C_T \cap \C_{T'} \subset \Pic^g(\Gamma)$ is equal to the dimension of $\Sigma_T \cap \Sigma_{T'}$ inside
$\prod_{e \in E} {\bar e}$.  In particular,
$\C_T \cap \C_{T'}$ is non-empty if and only if
$\prod_{e \not\in T} {\bar e} \cap \prod_{e \not\in T'} {\bar e}$ is non-empty.

\medskip

Moreover, recall from \cite{MK08} that $\Pic^g(\Gamma)$ is canonically a principal homogeneous space for the Picard group $\Pic^0(\Gamma)$
and that there is a canonical isomorphism (the ``tropical Abel-Jacobi map'') between $\Pic^0(\Gamma)$ and $\Jac(\Gamma) = H_1(\Gamma,\RR) / H_1(\Gamma,\ZZ)$,
which is a real torus of dimension $g$.  The intersection form on $H_1(\Gamma,\ZZ)$ gives rise to a
canonical translation-invariant Riemannian metric on $\Jac(\Gamma)$, and via translation-invariance $\Pic^g(\Gamma)$ is equipped with a canonical metric as well.
In particular, the volume of $\Pic^g(\Gamma)$ and of each of the cells $C_T$ is well-defined.

\begin{theorem} \label{thm:mainthm3}
For each spanning tree $T$ of $G$, the volume of the parallelotope $C_T$ is $\frac{w(T)}{{\rm Vol}(\Jac(\Gamma))}$,
where $w(T) := \prod_{e \not \in E(T)}{\ell(e)}$ is the product of the lengths all edges of $G$ not in $T$.
Moreover, the volume of $\Jac(\Gamma)$ is $\sqrt{\det(\Cf \Cf^t)}$, where $\Cf$ is any matrix whose rows form a $\ZZ$-basis for $H_1(G,\ZZ)$.
\end{theorem}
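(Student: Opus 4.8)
The plan is to compute both volumes directly in the linear model $\Jac(\Gamma) = H_1(\Gamma,\RR)/H_1(\Gamma,\ZZ)$, exploiting the fact that the tropical Abel--Jacobi map is linear on each cube $\Sigma_T$. First I would recall the standard description of the tropical Abel--Jacobi theory: writing $C_1(\Gamma,\RR) = \bigoplus_e \RR\,\mathbf e$ with the inner product $\langle \mathbf e, \mathbf e'\rangle = \ell(e)\,\delta_{ee'}$, one has $H_1(\Gamma,\RR) = \ker(\partial) \subset C_1$, and for a degree-$0$ divisor $D$ with $\partial c = D$ the class of $D$ in $\Jac(\Gamma)$ is $\operatorname{pr}(c) \bmod H_1(\Gamma,\ZZ)$, where $\operatorname{pr}\colon C_1 \to H_1(\Gamma,\RR)$ is the orthogonal projection onto the cycle space. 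Since $\Pic^g(\Gamma)$ is a torsor under $\Pic^0(\Gamma) \cong \Jac(\Gamma)$ with translation-invariant metric, it suffices to compute the volume of a translate of each $C_T$ inside $\Jac(\Gamma)$.

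Fix a spanning tree $T$; the $g$ edges $e \notin T$ are the cotree edges, and for each such $e$ let $\gamma_e = \mathbf e + (\text{path in } T)$ be the associated fundamental cycle, so that $\{\gamma_e : e \notin T\}$ is a $\ZZ$-basis of $H_1(\Gamma,\ZZ) = H_1(G,\ZZ)$. Parametrizing a point at distance $t_e$ along $e$ by $s_e = t_e/\ell(e) \in [0,1]$, moving the points of a break divisor along the cotree edges produces the chain $c = \sum_{e\notin T} s_e\,\mathbf e$, so the image of $\Sigma_T$ in $\Jac(\Gamma)$ is a translate of the parallelotope $P_T$ spanned by $\{\operatorname{pr}(\mathbf e) : e \notin T\}$. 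The key computation is a \emph{duality lemma}: because $\operatorname{pr}$ is self-adjoint and fixes $H_1$, for cotree edges $e,e'$ one has $\langle \operatorname{pr}(\mathbf e), \gamma_{e'}\rangle = \langle \mathbf e, \gamma_{e'}\rangle = \ell(e)\,\delta_{ee'}$, since $\gamma_{e'}$ contains the cotree edge $\mathbf{e'}$ with coefficient $1$ and no other cotree edge. Thus $\{\operatorname{pr}(\mathbf e)/\ell(e)\}$ is precisely the dual basis to the lattice basis $\{\gamma_{e'}\}$. Consequently $P_T$ has volume $\big(\prod_{e\notin T}\ell(e)\big)$ times the covolume of the dual basis, which equals $\big(\prod_{e\notin T}\ell(e)\big)/\Vol(\Jac(\Gamma)) = w(T)/\Vol(\Jac(\Gamma))$, using that the Gram matrix of a dual basis is the inverse of the original Gram matrix. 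This gives the first assertion, $C_T = \pi(\Sigma_T)$ being a parallelotope by Theorem~\ref{thm:mainthm1}.

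For the second assertion, by definition the metric on $\Jac(\Gamma)$ is induced by the intersection form, so $\Vol(\Jac(\Gamma))$ is the covolume of the lattice $H_1(\Gamma,\ZZ)$ in $H_1(\Gamma,\RR)$, namely $\sqrt{\det M}$ where $M$ is the Gram matrix of any $\ZZ$-basis. Taking the basis given by the rows of $\Cf$ and reading off $M = \Cf\Cf^t$ (the $(i,j)$ entry being the intersection pairing of the $i$-th and $j$-th basis cycles) yields $\Vol(\Jac(\Gamma)) = \sqrt{\det(\Cf\Cf^t)}$. I expect the main obstacle to lie in the first paragraph's input rather than in the linear algebra: one must set up the tropical Abel--Jacobi map carefully enough to justify both that the induced map on $\Sigma_T$ is the orthogonal projection onto the cycle space (with the normalization above) and that the resulting metric is the one making $\Cf\Cf^t$ the correct Gram matrix. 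Once this is in place, the duality lemma makes the volume factor $\prod_{e\notin T}\ell(e)$ fall out cleanly.
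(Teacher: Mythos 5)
Your proposal is correct and takes essentially the same route as the paper's proof of Proposition~\ref{volume}: the paper likewise realizes the translate $\D_T$ of $C_T$ in $\Jac(\Gamma)$ as the parallelotope spanned by the orthogonal projections $\tilde e$ of the cotree edges, and your duality lemma $\langle \operatorname{pr}(\mathbf e), \gamma_{e'}\rangle = \ell(e)\,\delta_{ee'}$ is exactly the paper's matrix identity $\Lf_T \Cf_T^{t} = \Df_T^2$, from which both arguments extract ${\rm Vol}(\D_T) = \det(\Df_T)^2/\sqrt{\det(\Cf_T\Cf_T^t)} = w(T)/{\rm Vol}(\Jac(\Gamma))$, with the second assertion being the standard Gram-matrix volume formula in both cases. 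The only difference is presentational: you package the linear algebra as a dual-lattice-basis argument (the Gram matrix of the dual basis being $M^{-1}$), while the paper writes $\Lf_T = \Df_T^2(\Cf_T\Cf_T^t)^{-1}\Cf_T$ and computes $\det(\Lf_T\Lf_T^t)$ explicitly.
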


Since distinct cells $C_T$ intersect in positive codimension, we have $\Vol(\Jac(\Gamma)) = \sum_T \Vol(C_T)$ and thus
Theorem~\ref{thm:mainthm3} implies the following \emph{dual version of Kirchhoff's Matrix-Tree Theorem}:

\begin{corollary} \label{cor:Kirchhoff}
For any weighted graph $G$,
\[
\det(\Cf \Cf^t) = \sum_T w(T).
\]
\end{corollary}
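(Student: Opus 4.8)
The plan is to obtain the corollary as a purely formal consequence of Theorem~\ref{thm:mainthm3} together with the polyhedral decomposition of $\Pic^g(\Gamma)$ into the cells $C_T$. The starting point is the volume identity already recorded in the text: since the $C_T$ cover $\Pic^g(\Gamma)$ and any two distinct cells overlap only in positive codimension (hence in a set of measure zero), their $g$-dimensional volumes add up to give
\[
\Vol(\Jac(\Gamma)) = \sum_T \Vol(C_T),
\]
where I use that $\Pic^g(\Gamma)$, being a torsor under $\Jac(\Gamma)$ with the translation-invariant metric, has the same volume as $\Jac(\Gamma)$.

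First I would substitute the first formula from Theorem~\ref{thm:mainthm3}, namely $\Vol(C_T) = w(T)/\Vol(\Jac(\Gamma))$, into the right-hand side. Since the denominator $\Vol(\Jac(\Gamma))$ is independent of $T$, this gives
\[
\Vol(\Jac(\Gamma)) = \frac{1}{\Vol(\Jac(\Gamma))}\sum_T w(T).
\]
Clearing the denominator yields $\Vol(\Jac(\Gamma))^2 = \sum_T w(T)$.

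Next I would apply the second formula from Theorem~\ref{thm:mainthm3}, namely $\Vol(\Jac(\Gamma)) = \sqrt{\det(\Cf \Cf^t)}$, which gives $\Vol(\Jac(\Gamma))^2 = \det(\Cf \Cf^t)$. Combining the two expressions for $\Vol(\Jac(\Gamma))^2$ immediately produces $\det(\Cf \Cf^t) = \sum_T w(T)$, completing the argument.

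Because all of the geometric and combinatorial content is concentrated in Theorem~\ref{thm:mainthm3}, the corollary itself presents no serious obstacle; the only point demanding care is the additivity of volumes across the decomposition. This in turn rests on two facts established earlier: that the cells cover $\Pic^g(\Gamma)$ (a consequence of Theorem~\ref{thm:mainthm1}), and that $\dim(C_T \cap C_{T'}) = \dim(\Sigma_T \cap \Sigma_{T'}) < g$ for $T \neq T'$, which guarantees that the pairwise intersections contribute nothing to the total volume.
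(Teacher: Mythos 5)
Your proposal is correct and takes essentially the same route as the paper, which deduces Corollary~\ref{cor:Kirchhoff} exactly by summing $\Vol(C_T) = w(T)/\Vol(\Jac(\Gamma))$ over the polyhedral decomposition (using that distinct cells meet in positive codimension, so volumes add) and combining with $\Vol(\Jac(\Gamma)) = \sqrt{\det(\Cf\Cf^t)}$ from Theorem~\ref{thm:mainthm3}. Your explicit attention to the measure-zero overlaps and to the identification of $\Vol(\Pic^g(\Gamma))$ with $\Vol(\Jac(\Gamma))$ via the torsor structure just spells out what the paper states in passing.
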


The usual version of Kirchhoff's Matrix-Tree Theorem is (a special case of) the dual statement that for any weighted graph $G$, we have
\[
\det(\Bf \Bf^t) = \sum_T w'(T),
\]
where $w'(T) := \prod_{e \in E(T)}{\ell(e)}$ is the product of the lengths all edges of $G$ in $T$ and $\Bf$ is any matrix whose rows form a $\ZZ$-basis for
the \emph{cocycle lattice} of $G$ (which is the intersection of $C_1(G,\ZZ)$ with the orthogonal complement of $H_1(G,\RR)$ in $C_1(G,\RR)$).

\medskip

Note that the dual version of Kirchhoff's theorem, like the cycle lattice $H_1(G,\ZZ)$, is unchanged if we replace $G$ by a different model $G'$ for the same
metric graph $\Gamma$.  This is not true of the usual version of Kirchhoff's theorem, or of the cocycle lattice.

\section{Definitions and background}

\subsection{Metric graphs}

\begin{definition}
A metric graph (or abstract tropical curve) $\Gamma$ is a compact connected metric space such that every point $p \in \Gamma$ has a neighborhood isometric to a star-shaped set.
\end{definition}

For the purposes of this paper, we define a \textit{weighted graph} to be a finite, connected graph $G$ with no loop edges,
together with a collection of positive weights $L_e$ (which we also call \emph{lengths}), one for each edge.

A weighted graph $G$ gives rise to a metric space $\Gamma$ in the following way. To each
edge $e$, associate a line segment of length $L_e$, and identify the ends of distinct
line segments if they correspond to the same vertex of $G$. The points of these line
segments are the points of $\Gamma$. We call $G$ a $\textit{model}$ for $\Gamma$. The
distance between two points $x$ and $y$ in $\Gamma$ is defined to be the length of the
shortest path between them.

It is easy to check that every metric graph arises from this construction, though the weighted graph model is not unique.
Write $G \sim G'$ if the two weighted graphs $G, G'$ admit a \textit{common
refinement}, where we \textit{refine} a weighted graph by subdividing its edges in a
manner that preserves the total length. This yields an equivalence relation on the collection of weighted graphs, and two
weighted graphs are equivalent if and only if their associated metric graphs are isometric.
There is thus a bijective correspondence between metric graphs and equivalence classes of
weighted graphs (see \cite{BakerFaber11} for details).

\subsection{Reduced divisors}
Let  $\Div(\Gamma)$ (the group of \emph{divisors} on $\Gamma$) be the free abelian group on $\Gamma$. An element of $\Div(\Gamma)$ is of the form $D= \sum_{p\in \Gamma} a_p(p)$, where $a_p \in \ZZ$ and all but finitely many of $a_p$'s are zero. The degree of $D$ is by definition $\deg(D)=\sum_{p \in \Gamma} {a_p}$. Let $\Div^0 (\Gamma)$  be  the subgroup of  divisors  of  degree zero  on  $\Gamma$. A  divisor  $D  =  \sum a_p(p)$  is  called \emph{effective} if $a_p \geq 0$  for  all $p$, and is called \emph{effective outside $q$}  if $a_p \geq 0$ for all $p  \ne  q$. The support of $D$ is by definition $\supp(D)=\{p \in \Gamma \, | \, D(p) \ne 0 \}$.

\medskip

Let $R(\Gamma)$ be the group consisting of continuous piecewise affine functions with  integer slopes. This can be viewed as the space of \emph{tropical rational functions} on $\Gamma$, cf. \cite{GathmannKerber, MK08}.  Let $\Delta$ be the Laplacian operator on $\Gamma$; for $f \in R(\Gamma)$ we have
\[
\Delta (f) =  \sum_{p \in \Gamma}{\sigma_{p} (f) (p),}
\]	
where $-\sigma_p(f)$ is the sum of the slopes of $f$ in all tangent directions emanating from $p$.
(A \emph{tangent direction} at a point $p \in \Gamma$ is an equivalence class of paths emanating from $p$, where two paths are equivalent if they share a common initial segment.)
Let  $\Prin(\Gamma)$ be the subgroup $\{ \Delta (f) \, | \,  f \in  R(\Gamma)\}$  of  $\Div^0 (\Gamma)$ consisting of \emph{principal divisors}. We write $D \sim D'$ if $D - D'$ belongs to $\Prin(\Gamma)$ and say that $D$ and $D'$ are \emph{linearly equivalent}. For $D \in \Div(\Gamma)$, we define the complete linear system $|D|$ to be the set of all  effective divisors $E$ equivalent to $D$,  i.e., $|D|=  \{E  \in \Div(\Gamma)  \, | \, E  \ge 0,
E  \sim D\}$. Similarly, we define $|D|_q$ to be the set of divisors equivalent to $D$ which are effective outside $q$:
\[
|D|_q = \{E \in \Div(\Gamma) \, | \, E(p) \geq 0 , \forall p \ne q,  E \sim D\} \ .
\]

\begin{remark}
Given an effective divisor $D$, it is customary to think of $D(v)$ as the number of ``chips'' placed at the point $v \in \Gamma$. For a subset $X$ of $\Gamma$ consisting of a finite union of closed intervals, one can construct a rational function $f \in R(\Gamma)$ which is $0$ on $X$ and $\epsilon$ outside an $\epsilon$-neighborhood of $X$ (for some sufficienly small $\epsilon$), with slope $1$ in each outgoing direction from $X$. Replacing $D$ with $D+\Delta(f)$ has the effect of moving a chip a distance of $\epsilon$ along each outgoing direction from $X$, and is often called ``firing'' the subset $X$.  Every element of $R(\Gamma)$ can be written as a finite integer-affine combination of functions of this
form, and therefore one can describe linear equivalence of divisors on $\Gamma$ in terms of chip firing.
\end{remark}

\begin{definition} \label{def:reduced}
Fix $q  \in \Gamma$. A divisor $D$ on $\Gamma$ is called \emph{$q$-reduced} if it satisfies the following two conditions:
\begin{itemize}
 \item[(R1)]  $D$ is effective outside $q$.
 \item[(R2)]  For every closed connected set $X  \subseteq \Gamma$ not containing $q$, there exists a point $p \in \partial X$  such that $D(p) < \outdeg_X (p)$.
\end{itemize}
\end{definition}

The following is the metric graph analogue of \cite[Lemma 4.11]{FarbodMatt12}.
\begin{lemma} [Principle of least action] \label{LeastActLemma}
 If  $D$  is  $q$-reduced  and  $f  \in R(\Gamma)$  is  a  tropical rational  function with $D + \Delta(f) \in |D|_q$, then $f$  has a global minimum at $q$.
\end{lemma}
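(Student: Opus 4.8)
The plan is to argue by contradiction using condition (R2) of $q$-reducedness. Suppose $f$ does not attain its global minimum at $q$. Since $\Gamma$ is compact and $f$ is continuous, $f$ attains a global minimum value $m := \min_{\Gamma} f$, and by assumption $f(q) > m$. I would then consider the minimum locus $X := \{p \in \Gamma : f(p) = m\}$, a nonempty closed set with $q \notin X$. Because $\Gamma$ is locally connected, any connected component $X_0$ of $X$ is again closed; fix such a component. Then $X_0$ is closed, connected, nonempty, and does not contain $q$, so it is a legitimate test set for (R2). The goal is to contradict (R2) by showing that $D(p) \ge \outdeg_{X_0}(p)$ for \emph{every} $p \in \partial X_0$.

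The heart of the argument is a local slope computation at a boundary point $p \in \partial X_0 \subseteq X_0$. Since $p$ is a global minimum of $f$, the slope of $f$ in every tangent direction emanating from $p$ is $\ge 0$. Moreover, each tangent direction leaving $X_0$ must have slope $\ge 1$: a short initial segment along such a direction lies outside $X$ (otherwise it would, together with $p$, lie in the same component $X_0$), so $f > m$ immediately, and integrality of slopes forces the slope to be a positive integer. Directions that stay inside $X_0$ contribute slope $0$. Summing over all directions, the total outgoing slope at $p$ is at least $\outdeg_{X_0}(p)$, so by the definition of the Laplacian $\Delta(f)(p) = \sigma_p(f) \le -\outdeg_{X_0}(p)$.

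Since $D + \Delta(f) \in |D|_q$ is effective outside $q$ and $p \ne q$, we have $(D + \Delta(f))(p) \ge 0$, whence $D(p) \ge -\Delta(f)(p) \ge \outdeg_{X_0}(p)$. As this holds for all $p \in \partial X_0$, no boundary point satisfies the strict inequality $D(p) < \outdeg_{X_0}(p)$ demanded by (R2), contradicting the assumption that $D$ is $q$-reduced. Hence $f$ must attain its global minimum at $q$.

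I expect the main obstacle to be the careful justification of the local claim that directions leaving a connected component of the minimum locus have strictly positive (hence $\ge 1$) slope; this rests on local connectedness of $\Gamma$ and on the fact that $X_0$ is a whole connected component rather than an arbitrary connected subset of $X$. A secondary point to verify is that the minimum locus and each of its components genuinely avoid $q$ and have nonempty boundary (the latter following since a component with empty boundary would be clopen, hence all of the connected space $\Gamma$, forcing it to contain $q$).
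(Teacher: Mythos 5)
Your proposal is correct and follows essentially the same route as the paper: both pass to a connected component of the minimum locus of $f$ avoiding $q$, bound $\Delta(f)(p)\le -\outdeg_{X_0}(p)$ at boundary points via nonnegative integer slopes, and contradict (R2) using effectivity outside $q$ (you phrase (R2) contrapositively, showing every boundary point fails it, while the paper invokes (R2) to produce one point and derives $(D+\Delta(f))(p)<0$). Your write-up is in fact slightly more careful than the paper's, which asserts the slope inequality without the local piecewise-affine justification you supply (and states it with a strict inequality where $\le$ suffices).
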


\begin{proof}	
Consider the set of points where  $f$ achieves its (global) minimum value. If this set is not $\{q\}$ then we may find a closed connected component $X$ not containing $q$.  By Definition~\ref{def:reduced} there  exists  $p \in \partial X$ such that $D(p)  <  \outdeg_X(p)$. On the  other hand, we have $\Delta(f)(p) < -\outdeg_X (p)$, and thus $(D +\Delta(f)) (p) < 0$, contradicting the assumption that $D + \Delta(f) \in |D|_q $.
\end{proof}

The importance of reduced divisors is given by the following theorem (see, e.g., \cite{Hladkyetal, Luoye}).
	
\begin{theorem} \label{UniquenessThm}
 Fix $q \in \Gamma$. There is a unique $q$-reduced divisor in each linear equivalence class of divisors on $\Gamma$.
\end{theorem}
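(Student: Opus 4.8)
The plan is to separate the statement into uniqueness and existence, disposing of uniqueness first since it is an immediate consequence of the Principle of Least Action (Lemma~\ref{LeastActLemma}), and then treating existence, which is where the real work lies. For uniqueness, suppose $D$ and $D'$ are both $q$-reduced and linearly equivalent, and write $D' = D + \Delta(f)$ with $f \in R(\Gamma)$. Since $D'$ is $q$-reduced it satisfies (R1), hence $D' \in |D|_q$, and Lemma~\ref{LeastActLemma} forces $f$ to attain its global minimum at $q$. Reversing the roles of $D$ and $D'$ (using $D = D' + \Delta(-f)$ and $D \in |D'|_q$) shows that $-f$ also attains its global minimum at $q$, i.e.\ $f$ attains its global maximum at $q$. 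A continuous function attaining both its global minimum and its global maximum at the same point is constant, so $\Delta(f) = 0$ and $D = D'$.

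For existence I would proceed in two steps. First, produce a representative in the class satisfying (R1): starting from an arbitrary divisor, one fires nested regions of $\Gamma$ so as to sweep all chips toward $q$, which concentrates any negative excess at $q$ and leaves a divisor effective outside $q$. Second, pass from a representative satisfying (R1) to one also satisfying (R2). Here the natural tool is a continuous analogue of Dhar's burning algorithm: ignite a fire at $q$ and let it spread through $\Gamma$ at unit speed, where a point $p$ arrests the fire in the not-yet-burnt directions precisely when $D(p) \geq \outdeg_X(p)$ for the current burnt region $X$. If the fire eventually consumes all of $\Gamma$ then condition (R2) of Definition~\ref{def:reduced} holds and $D$ is $q$-reduced; otherwise there is a nonempty closed unburnt set $X$ with $q \notin X$ and $D(p) < \outdeg_X(p)$ for every $p \in \partial X$, and firing $X$ moves its chips one step closer to $q$ while keeping the divisor in the same class and effective outside $q$.

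The main obstacle is showing that this reduction procedure actually converges to a $q$-reduced divisor. In the combinatorial (vertex-supported) case a monotone integer quantity, such as the total distance of the chips from $q$, strictly decreases at each firing and is bounded below, yielding termination after finitely many steps. In the metric setting this counting argument is unavailable, and I would instead argue by monotonicity together with compactness: one fixes the class, restricts to representatives effective outside $q$ whose chips lie within a bounded distance of $q$ (reduced divisors obey such a bound by a genus estimate), checks that this space is compact, and minimizes an energy functional that is lower semicontinuous and strictly decreased by the firing of any unburnt region. A minimizer then cannot admit a nonempty unburnt $X$, so it satisfies (R2). Making the compactness and the continuity of the energy along firings precise is the technical crux, and it is exactly this analysis that is carried out in the references cited in the statement; in the present paper we therefore simply invoke their result.
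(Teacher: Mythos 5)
Your proposal is correct as far as it goes, but note that the paper does not actually prove Theorem~\ref{UniquenessThm} at all: it is quoted from the literature, with the reader referred to \cite{Hladkyetal} and \cite{Luoye}, and only the least action principle (Lemma~\ref{LeastActLemma}) is proved in-house. So on the uniqueness half your proposal strictly exceeds the paper's treatment, and your argument there is complete and correct: writing $D' = D + \Delta(f)$, the hypotheses of Lemma~\ref{LeastActLemma} are satisfied in both directions (each of $D, D'$ is effective outside $q$ and they are equivalent, so $D' \in |D|_q$ and $D \in |D'|_q$; also $-f \in R(\Gamma)$ since $R(\Gamma)$ is a group), whence $f$ attains both its global minimum and maximum at $q$, is constant on the connected space $\Gamma$, and $\Delta(f)=0$. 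This is exactly the mechanism the paper itself uses in Lemma~\ref{lem:inj}. On the existence half, your continuous Dhar-type burning procedure, with the observation that an unburnt region $X$ violates (R2) of Definition~\ref{def:reduced} and can be fired, is indeed the standard route taken in the cited references, and you correctly identify the genuine difficulty: in the metric setting the finite-graph termination argument (a monotone integer potential) is unavailable, and one needs a compactness or limiting argument to show the reduction process converges. As written, though, that half is a sketch rather than a proof — the energy functional is unspecified, the a priori bound on chip distances, the lower semicontinuity, and the strict decrease under firing are all unverified, and even your first step (producing a representative satisfying (R1) in a given class on a metric graph) requires an argument. Since you explicitly defer these points to the same references the paper cites, your treatment is no weaker than the paper's own; just be aware that the entire burden of the theorem sits in precisely the parts you outsource, so the proposal should be read as ``uniqueness proved, existence cited'' rather than as a self-contained proof.
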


\subsection{Tropical Picard group, Jacobian, and Abel-Jacobi map}

The (degree $0$) {\bf Picard group} of $\Gamma$ is by definition the quotient
\[
\Pic^0(\Gamma):= \Div^0(\Gamma) / \Prin(\Gamma) \ .
\]
More generally, for $d \geq 0$, let $\Div^d (\Gamma)$  be  the subset of  divisors of degree $d$ on $\Gamma$ and define
\[
\Pic^d(\Gamma):= \Div^d(\Gamma) / \Prin(\Gamma) \ .
\]
The set $\Pic^d(\Gamma)$ is canonically a $\Pic^0(\Gamma)$-torsor.

\medskip

The tropical Abel-Jacobi theory identifies (as topological groups) $\Pic^0(\Gamma)$ with the {\bf Jacobian torus}
\[
\Jac(\Gamma)=\Omega^{\ast}(\Gamma) / H_1(\Gamma, \ZZ) \ .
\]
 Here $\Omega(\Gamma)$ denotes the space of \emph{harmonic $1$-forms} on $\Gamma$. A harmonic $1$-form on $\Gamma$ is obtained by assigning a real-valued slope to each edge in $\Gamma$ in such a way that the sum of the incoming slopes is zero at every vertex. The homology group $H_1(\Gamma, \ZZ)$ embeds as a lattice in $\Omega^{\ast}(\Gamma)$ (the dual vector space of $\Omega(\Gamma)$) by integration of $1$-forms along $1$-cycles.  There is a canonical identification of
$\Omega(\Gamma)^{\ast}$ with the singular cohomology space $H_1(\Gamma,\RR)$, so we will often write
\[
\Jac(\Gamma)= H_1(\Gamma, \RR) / H_1(\Gamma, \ZZ) \ .
\]

The vector space $H_1(\Gamma, \RR)$ is equipped with a natural translation-invariant Riemannian metric which induces a canonical metric, and in particular
a canonical volume form, on the quotient torus $\Jac(\Gamma)$.

\medskip

For each positive integer $d$, the corresponding {\bf Abel-Jacobi map}
\[
S^{(d)}: \Div_{+}^d(\Gamma) \rightarrow \Pic^d(\Gamma)
\]
sends a divisor $D \in \Div_{+}^d(\Gamma)$ to the divisor class $[D] \in \Pic^d(\Gamma)$.

We will denote $S^{(g)}$ by $\pi$.

Fixing a basepoint $q \in \Gamma$, the torus $\Pic^d(\Gamma)$ is identified with the group $\Pic^0(\Gamma)$ and $S^{(d)}$ is identified with the map
\[
S^{(d)}_q: \Div_{+}^d(\Gamma) \rightarrow \Pic^0(\Gamma)
\]
taking a divisor $D \in \Div_{+}^d(\Gamma)$ to the divisor class $[D-d(q)] \in \Pic^0(\Gamma)$.

All maps $S^{(d)}$ are \emph{piecewise linear} in the appropriate sense; fixing the basepoint $q$, the real vector space $\Omega^{\ast}(\Gamma)$ is identified with the universal cover of $\Pic^d(\Gamma)$, and the restriction of the Abel-Jacobi map to any contractible subset factors through a piecewise-linear map to $\Omega^{\ast}(\Gamma)$. Note that $\Div_{+}^d(\Gamma)$ is endowed with a natural integral affine structure induced from $\Gamma$ (see, e.g., \cite[Section 2.1]{Amini}).

\medskip

\section{Spanning trees and divisors}

\subsection{Break divisors, break pairs, fundamental domains, and orientations}

Recall from Section~\ref{sec:Introduction} that a \emph{break divisor} on $\Gamma$ is a divisor of the form $(p_1)+ \cdots + (p_g)$, where $p_i \in {\bar e_i}$ and
$\Gamma \backslash T = \bigcup_{i=1}^g e_i^\circ$ for some spanning tree $T$ of $G$.  (Here $G$ is any fixed model of $\Gamma$).
See Figure~\ref{ex01}.

Here is another way to characterize break divisors, following \cite{MK08}.
If $p$ is a vertex of some model $G$ for $\Gamma$, there is a natural bijection between tangent directions at $p$ and edges of $G$ incident to $p$.
For $\varepsilon > 0$ sufficiently small, there is a well-defined point $p+\varepsilon \eta$ at distance $\varepsilon$ from $p$ in the direction $\eta$.
Let $p_1,\ldots,p_g$ be (not necessarily distinct) points on $\Gamma$ and for each $i$ let $\eta_i$ be a tangent direction at $p_i$.
If $\Gamma \backslash \{ q_1,\ldots,q_g \}$ is a tree (i.e., is connected and simply connected) for $\varepsilon > 0$ sufficiently small, where $q_i = p_i+\varepsilon \eta_i$, we call the collection $\{ (p_1, \eta_1),\ldots,(p_g, \eta_g) \}$ a \emph{break set} for $\Gamma$.
See Figure~\ref{ex02}.

\begin{lemma} \label{lem:break_div_vs_pair}
If $\{ (p_1, \eta_1),\ldots,(p_g, \eta_g) \}$ is a break set then $(p_1)+\cdots+(p_g)$ is a break divisor.
Conversely, if $(p_1)+\cdots+(p_g)$ is a break divisor then there exist (not necessarily unique) tangent directions $\eta_i$ at $p_i$
such that $\{ (p_1, \eta_1),\ldots,(p_g, \eta_g) \}$ is a break set.
\end{lemma}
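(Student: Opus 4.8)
The plan is to reduce both implications to a single topological observation about deleting edges from the model. The key fact is that removing an interior point $q$ from an open edge $e^\circ$ of $G$ does not change the homotopy type in an interesting way: the space $\Gamma \setminus \{q\}$ deformation retracts onto the geometric realization of the graph $G - e$ obtained by deleting the edge $e$ while keeping all vertices, since each of the two half-open segments of $e \setminus \{q\}$ retracts onto its endpoint. Applying this to interior points lying in \emph{distinct} open edges $e_1,\ldots,e_g$, one sees that $\Gamma \setminus \{q_1,\ldots,q_g\}$ deformation retracts onto $G - \{e_1,\ldots,e_g\}$, and this retraction preserves both connectedness and simple connectedness. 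Since deleting $g$ edges from $G$ leaves a spanning subgraph with exactly $|E(G)| - g = |V(G)| - 1$ edges, such a subgraph is a tree precisely when it is connected. The plan therefore hinges on the equivalence
\begin{equation*}
\Gamma \setminus \{q_1,\ldots,q_g\} \text{ is a tree} \iff T := G - \{e_1,\ldots,e_g\} \text{ is a spanning tree of } G,
\end{equation*}
together with the remark (used freely below) that the genus $g$ is a model-independent invariant, so refining $G$ never changes it and the complement of a spanning tree always consists of exactly $g$ edges.

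For the forward direction I would first refine $G$ so that every $p_i$ is a vertex. For $\varepsilon$ sufficiently small, each $q_i = p_i + \varepsilon\eta_i$ then lies in the interior of the edge $e_i$ incident to $p_i$ in the direction $\eta_i$, and $p_i \in \bar e_i$ is an endpoint of $e_i$. The hypothesis that $\Gamma \setminus \{q_1,\ldots,q_g\}$ is a tree forces these $q_i$ to be distinct and to lie in pairwise distinct open edges: two points in a common open edge would leave an isolated middle segment, violating connectedness, and fewer than $g$ distinct removed points cannot make a genus-$g$ space simply connected. Hence the $e_i$ are $g$ distinct edges, and the displayed equivalence shows $T = G - \{e_1,\ldots,e_g\}$ is a spanning tree with $\Gamma \setminus T = \bigcup_i e_i^\circ$. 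Since $p_i \in \bar e_i$, the divisor $(p_1)+\cdots+(p_g)$ is by definition a break divisor.

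For the converse I would start from a break divisor $(p_1)+\cdots+(p_g)$ with $p_i \in \bar e_i$, where $\{e_1,\ldots,e_g\} = E(G) \setminus E(T)$ are the distinct non-tree edges, and define $\eta_i$ to point \emph{into} $e_i$: if $p_i \in e_i^\circ$ take either direction along $e_i$, and if $p_i$ is an endpoint of $e_i$ take the direction along $e_i$ away from $p_i$. Then $q_i = p_i + \varepsilon\eta_i \in e_i^\circ$ for small $\varepsilon$, so the $q_i$ lie in pairwise distinct open edges and are distinct. The only way two base points can coincide is $p_i = p_j$ at a vertex shared by the distinct edges $e_i \ne e_j$ (an interior point of $e_i$ meets no other closed edge), and there $\eta_i \ne \eta_j$ since distinct edges at a vertex determine distinct tangent directions; thus all the pairs $(p_i,\eta_i)$ are distinct as well. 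Reading the displayed equivalence in reverse, $\Gamma \setminus \{q_1,\ldots,q_g\}$ retracts onto the tree $T$, so $\{(p_1,\eta_1),\ldots,(p_g,\eta_g)\}$ is a break set.

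The hard part will not be conceptual but bookkeeping: the main obstacle is to organize the refinement in the forward direction so that the $q_i$ provably occupy distinct open edges each having $p_i$ as an endpoint, and to verify carefully that the deformation-retraction claim—combined with the edge count $|E(G)| - g = |V(G)| - 1$—correctly translates the topological conditions ``connected'' and ``simply connected'' into the combinatorial statement that $T$ is a spanning tree. One must also be attentive to the case of repeated points $p_i = p_j$, which is precisely where the freedom in choosing the tangent directions $\eta_i$ is essential for distinctness. Once the retraction lemma and this equivalence are established, both implications follow formally.
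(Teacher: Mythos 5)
Your proposal is correct and takes essentially the same route as the paper: both arguments refine the model so that each $p_i$ is a vertex, match the punctured points $q_i = p_i + \varepsilon\eta_i$ with $g$ distinct non-tree edges so that $\Gamma \setminus \{q_1,\ldots,q_g\}$ being a tree corresponds to $G$ minus those edges being a spanning tree, and in the converse choose $\eta_i$ pointing into $e_i^\circ$ (either direction if $p_i \in e_i^\circ$, toward the interior if $p_i \in \partial e_i$). The deformation retraction and edge-count bookkeeping you spell out are precisely the justification the paper leaves implicit in its assertion that $\Gamma \setminus \{q_1,\ldots,q_g\}$ contains a spanning tree $T$ with $p_i \in \bar{e_i}$.
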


\begin{proof}
Let $\{ (p_1, \eta_1),\ldots,(p_g, \eta_g) \}$ be a break set and let $\varepsilon > 0$ be small enough so that $\Gamma \backslash \{ q_1,\ldots,q_g \}$, where $q_i = p_i+\varepsilon \eta_i$, is a tree and all $q_i$'s have valence $2$. Fix a model $G$ for $\Gamma$ such that $ p_i  \in V(G)$ and $q_i \not \in V(G)$ for all $i$. It follows that $\Gamma \backslash \{ q_1,\ldots,q_g \}$ contains a spanning tree $T$ of $G$, and that $p_i \in {\bar e_i}$, where
$\Gamma \backslash T = \bigcup_{i=1}^g e_i^\circ$.

\medskip

Conversely, assume $G$ is a model of $\Gamma$ and, for some spanning tree $T$ of $G$, we have $\Gamma \backslash T = \bigcup_{i=1}^g e_i^\circ$ and we are given a break divisor $(p_1)+ \cdots + (p_g)$, where $p_i \in {\bar e_i}$. If $p_i \in  e_i^\circ$ we may choose $\eta_i$ to be either of the two tangent directions at $p_i$. If $p_i \in \partial e_i= {\bar e_i} \backslash e_i^\circ $ then we choose the tangent direction directed towards $e_i^\circ$.
\end{proof}

Because of the lemma, we will sometimes denote a break set by $(D,\eta)$, where $D = (p_1)+\cdots+(p_g)$ is a break divisor and $\eta = \{ \eta_1 , \ldots, \eta_g \}$ is the
corresponding set of \emph{break directions}.

\begin{remark}
\label{FundDomainRemark}
If we think of a metric graph as made of wires, one can think of a break set as a rule for snipping the wires so that the resulting network is connected and simply
connected.  More formally, define a \emph{fundamental domain} for $\Gamma$ to be a pair $(F,\psi)$ consisting of a (not necessarily compact) connected and simply connected topological space $F$ and a continuous bijection $\psi : F \to \Gamma$.  We identify two fundamental domains $(F,\psi)$ and $(F',\psi')$ if there is a homeomorphism $\phi : F \to F'$
such that $\psi = \psi' \circ \phi$.  Then one can show that there is a natural bijection between fundamental domains for $\Gamma$ and break sets.
(This is the point of view taken in \cite[\S{4.5}]{MK08}.) See Figure~\ref{ex03}.
\end{remark}

\begin{figure}[h!]
\begin{center}
\includegraphics[totalheight=0.18\textheight]{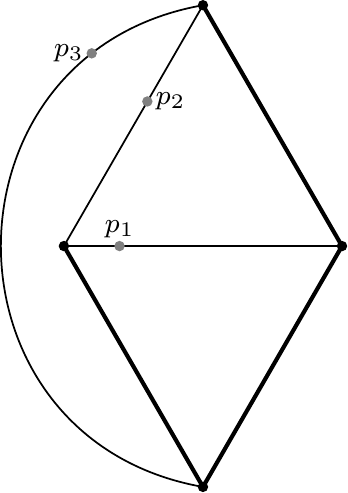}
\includegraphics[totalheight=0.18\textheight]{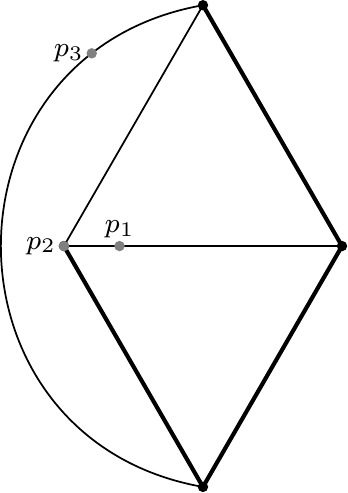}
\includegraphics[totalheight=0.18\textheight]{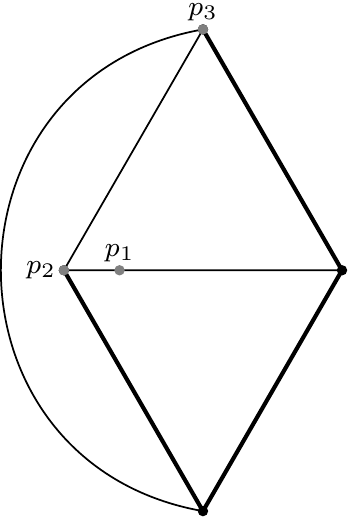}
\includegraphics[totalheight=0.18\textheight]{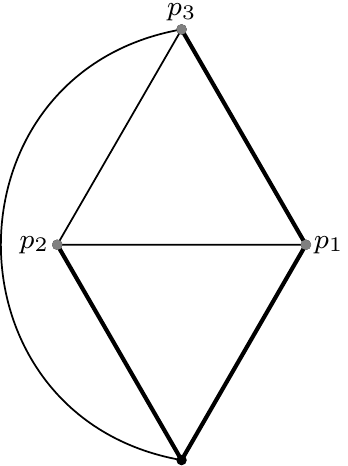}
\includegraphics[totalheight=0.18\textheight]{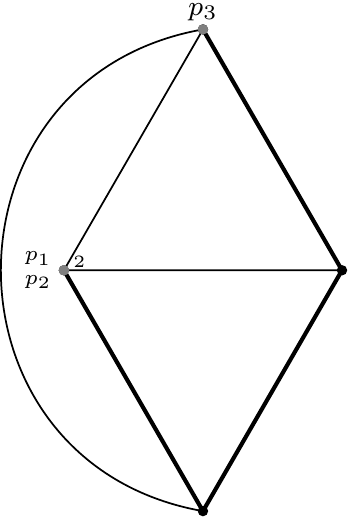}
\end{center}
  \caption{Examples of break divisors.}
  \label{ex01}
\vspace{+20pt}
\begin{center}
\includegraphics[totalheight=0.18\textheight]{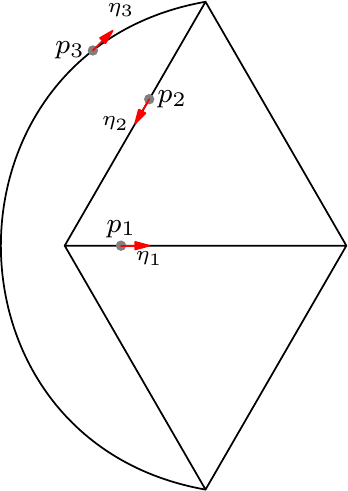}
\includegraphics[totalheight=0.18\textheight]{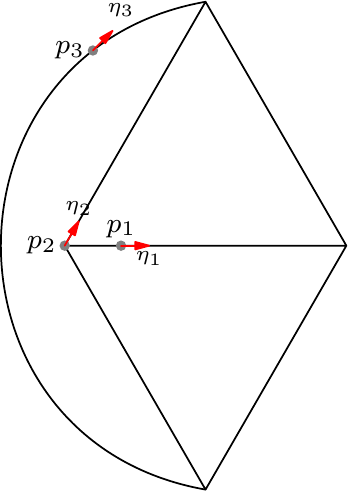}
\includegraphics[totalheight=0.18\textheight]{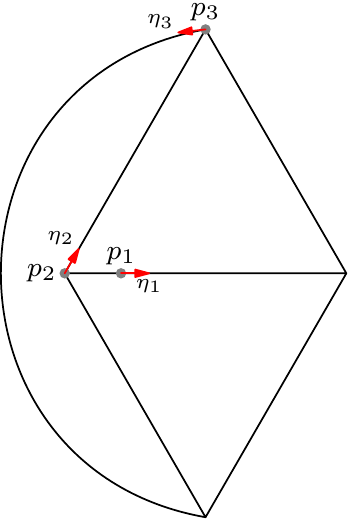}
\includegraphics[totalheight=0.18\textheight]{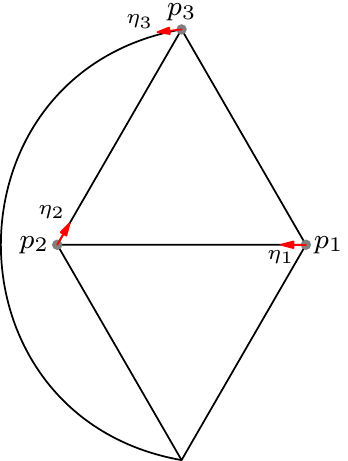}
\includegraphics[totalheight=0.18\textheight]{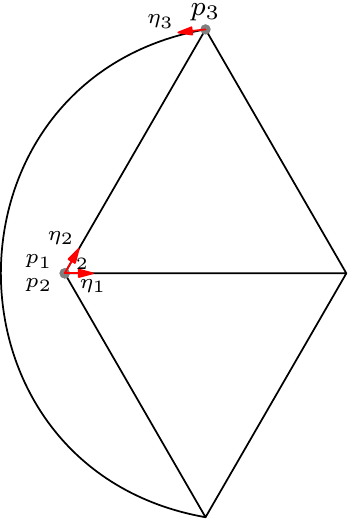}
\end{center}
  \caption{Examples of break sets.}
  \label{ex02}
\vspace{+20pt}
\begin{center}
\includegraphics[totalheight=0.18\textheight]{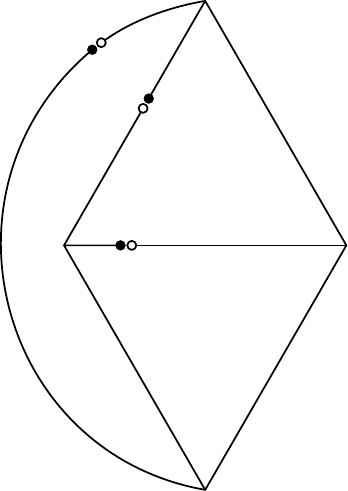}
\includegraphics[totalheight=0.18\textheight]{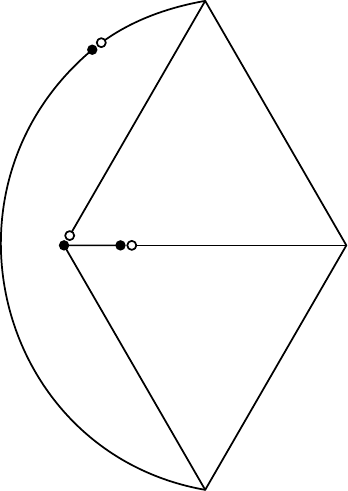}
\includegraphics[totalheight=0.18\textheight]{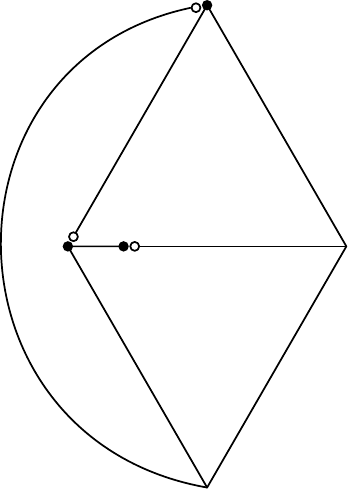}
\includegraphics[totalheight=0.18\textheight]{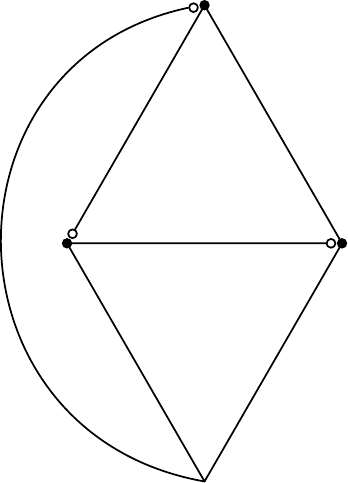}
\includegraphics[totalheight=0.18\textheight]{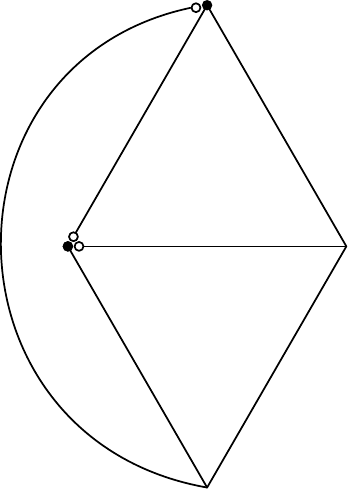}
\end{center}
  \caption{Examples of fundamental domains.}
  \label{ex03}
\end{figure}

An \emph{orientation} $\O$ of a metric graph $\Gamma$ is an equivalence class of pairs $(G,O)$, where $G$ is a model for $\Gamma$ and
$O$ is an orientation of the edges of $G$, where the equivalence relation is generated by the operation of replacing $G$ by a refinement $G'$
and letting $O'$ be the orientation induced by $O$.

For $q \in \Gamma$, we say that an orientation $\O$ is \emph{$q$-connected} if there is a directed path from $q$ to $p$ for every $p \in \Gamma$.

Given $q \in \Gamma$, there is a canonical way to associate a $q$-connected orientation $\O$ to any break set $\{ (p_1, \eta_1),\ldots,(p_g, \eta_g) \}$, as follows.
Choose $\varepsilon > 0$ small enough so that, setting $q_i = p_i+\varepsilon \eta_i$, we have (i) $\Gamma \backslash \{ q_1,\ldots, q_g \}$ is a tree, (ii) all $q_i$'s have valence $2$, and (iii) $q \neq q_i$ for any $i$.  We get an
associated $q$-connected orientation $\O_\varepsilon$ by orienting all edges away from $q$ in this tree, and letting $\varepsilon \rightarrow 0$ gives the desired $q$-connected orientation $\O$.
(In terms of the corresponding fundamental domain $(F,\psi)$ described in Remark~\ref{FundDomainRemark}, $\O$ corresponds to orienting all paths away from $\psi^{-1}(q)$ in the connected and simply connected space $F$.)

Conversely, there is a canonical way to associate a break divisor $D_{(\O,q)}$ to a $q$-connected orientation $\O$:

\[
D_{(\O,q)}:= (q)+D_\O = (q) + \sum_{p \in \Gamma} \left( {\rm indeg}_\O(p) - 1 \right)(p).
\]

\begin{lemma} \label{lem:break_vs_qorient}
Fix $q\in \Gamma$.  The map $\phi_q: \Div^g(\Gamma) \to \Div^{g-1}(\Gamma)$ sending $D$ to $D-(q)$ induces a bijection between break divisors and $q$-orientable divisors.
\end{lemma}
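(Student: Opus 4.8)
The plan is to exploit the two canonical constructions introduced just above the statement---the assignment $\alpha$ sending a break set to its $q$-connected orientation $\O$, and the assignment $\O \mapsto D_{(\O,q)} = (q) + D_\O$ sending a $q$-connected orientation to a divisor---and to show that they are mutually inverse at the level of divisors. Since $\phi_q$ is visibly injective on all of $\Div^g(\Gamma)$ (it is translation by $-(q)$), it suffices to prove the set equality $\phi_q(\{\text{break divisors}\}) = \{q\text{-orientable divisors}\}$.

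First I would prove the inclusion $\subseteq$. Given a break divisor $D = (p_1) + \cdots + (p_g)$, use Lemma~\ref{lem:break_div_vs_pair} to choose break directions $\eta_i$ making $(D,\eta)$ a break set, and let $\O = \alpha(D,\eta)$ be the associated $q$-connected orientation. The key computation is that $D_\O = D - (q)$, equivalently $D_{(\O,q)} = D$; granting this, $\phi_q(D) = D_\O$ is $q$-orientable by construction. To establish $D_\O = D-(q)$ I would compute $\indeg_\O$ point by point using the approximating orientations $\O_\varepsilon$ on the tree $\Gamma \setminus \{q_1,\ldots,q_g\}$ oriented away from $q$, and pass to the limit $\varepsilon \to 0$. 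Away from $q$ and the $p_i$ every point has indegree $1$, contributing $0$ to $D_\O$; the root $q$ has indegree $0$, contributing $-(q)$. The crucial local fact is that as $\varepsilon \to 0$ the cut point $q_i = p_i + \varepsilon\eta_i$ merges into $p_i$ while the segment on the far side of the cut folds back toward $p_i$, so that each break at $p_i$ supplies one extra incoming direction. Hence a point occurring with multiplicity $m$ among the $p_i$ acquires indegree $m+1$ (or $m$ when it coincides with the root $q$, which has no parent edge), and therefore coefficient $m$ in $D_{(\O,q)}$. Summing these contributions gives $D_{(\O,q)} = \sum_i (p_i) = D$.

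For the reverse inclusion $\supseteq$, I would start from a $q$-orientable divisor $E = D_\O$ with $\O$ a $q$-connected orientation and set $D := (q) + D_\O = D_{(\O,q)}$, so that $\phi_q(D) = E$ automatically. It remains to verify that $D$ is genuinely a break divisor. For this I would realize $\O$ as coming from a break set: using $q$-connectedness, select a directed spanning tree rooted at $q$ (a directed path from $q$ to each point), whose complement consists of $g$ edges; cutting $\Gamma$ at a suitable interior point of each such edge, with break direction dictated by $\O$, produces a fundamental domain in the sense of Remark~\ref{FundDomainRemark}, hence a break set $(D',\eta)$ with $\alpha(D',\eta) = \O$. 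The computation of the first inclusion then yields $D_{(\O,q)} = D'$, so $D = D'$ is a break divisor, as desired.

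The main obstacle is the indegree bookkeeping behind $D_{(\O,q)} = D$, and in particular the careful handling of the limit $\varepsilon \to 0$: one must confirm that the far side of each cut reverses orientation so as to turn $p_i$ into a sink, and then treat the degenerate cases in which several $p_i$ coincide, some $p_i$ sits at a vertex of the model, or some $p_i$ equals the basepoint $q$. Each is settled by the local analysis above, the decisive point being that $q$-connectedness guarantees a non-break ``parent'' direction at every point $p \neq q$, so that the indegree at $p$ is exactly (the number of breaks at $p$) $+\,1$.
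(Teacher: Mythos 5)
Your proof is correct, and its first half is essentially the paper's own argument: to show $\phi_q$ carries break divisors to $q$-orientable divisors, the paper does exactly your indegree bookkeeping, observing that ${\rm indeg}_\O(p)=1+s(p)$ for $p\neq q$ and ${\rm indeg}_\O(q)=s(q)$, where $s(p)$ is the number of break directions $\eta_i$ at $p$, so that the coefficient of $(p)$ in $D_\O+(q)$ is $s(p)=D(p)$. For surjectivity, however, you take a genuinely different route. The paper works directly from the divisor $D_\O+(q)$: it produces break directions by breaking all incoming edges at $q$ and all but one incoming edge at each $p\neq q$ with ${\rm indeg}_\O(p)\geq 2$, asserting that ``the unbroken edge can be chosen arbitrarily'' and that the result is a break set. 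Your construction instead extracts from the $q$-connected orientation a spanning arborescence rooted at $q$ and cuts the $g$ complementary edges. This buys something real: your cut space contains the entire spanning tree, hence is connected, and the Euler characteristic count $1-g+g=1$ then forces it to be a tree, so the break-set verification is automatic. By contrast, the paper's parenthetical is actually too strong: on the graph with vertices $q,u,v$ and oriented edges $q\to u$, $u\to v$, $v\to u$ (genus $1$, with $D_\O+(q)=(u)$), keeping $v\to u$ unbroken and breaking $q\to u$ at $u$ leaves the directed $2$-cycle through $u$ and $v$ intact after cutting, and the result is not a break set; a choice adapted to $q$-connectedness, such as the one your arborescence makes for you, is genuinely needed. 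One point you should make explicit in your write-up: ``a suitable interior point'' of a non-tree edge must mean the cut $q_i$ is taken immediately past the head $v_i$ of that edge --- equivalently, the break point is $p_i=v_i$ with $\eta_i$ the incoming tangent direction --- since cutting at a generic interior point would give a limiting orientation with a sink inside the edge, which differs from $\O$, and the identity $D'=(q)+D_\O$ drawn from your first computation would then fail. Your phrases ``dictated by $\O$'' and your fold-back analysis indicate you intend exactly this, but as written it carries the weight of the argument and deserves a sentence of its own.
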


\begin{proof}

Let $D=(p_1)+\cdots+(p_g)$ be a break divisor and consider an associated break set $\{ (p_1, \eta_1),\ldots,(p_g, \eta_g) \}$ as in Lemma~\ref{lem:break_div_vs_pair}.
Let $\O$ be the associated $q$-connected orientation as above.

Let $p \in \Gamma$. If $p \ne q$ then ${\rm indeg}_\O(p) = 1+s(p)$, where $s(p)$ is the number of tangent directions $\eta_i$ with $p_i=p$,
and if $p=q$ then ${\rm indeg}_\O(p) = s(p)$.
Thus for every $p \in \Gamma$, the coefficient of $(p)$ in $D_\O+(q)$ is equal to $s(p)$, which is equal to $D(p)$ by construction. This proves that $\phi_q$ induces a map from break divisors to $q$-orientable divisors.

The map $\phi_q$ is clearly injective. To see that it is surjective, suppose $D_{\O}$ is a $q$-orientable divisor corresponding to a $q$-connected orientation $\O$.
We will equip the effective divisor $D_{\O}+(q)$ with a set of tangent directions $\eta$ so that $(D_{\O}+(q), \eta)$ is a break set.
By \emph{breaking} an edge $e$ adjacent to $p$, we will mean adding the tangent direction at $p$ which corresponds to the edge $e$ to the set $\eta$.
If ${\rm indeg}_\O(q) \geq 1$, break all the incoming edges at $q$.
For each $p \neq q$ with ${\rm indeg}_\O(p)\geq 2$, break all but one of the incoming edges at $p$. (The unbroken edge can be chosen arbitrarily.)
Then one easily checks that $|\eta|=g$ and $(D_{\O}+(q), \eta)$ is a break set.

\end{proof}

Note that the set of break directions is not uniquely determined by $\O$, so we do not get a bijection between $q$-connected orientations and break sets (see Figure~\ref{fig:nonunique}).

\begin{figure}[h!]
\begin{center}
\includegraphics[totalheight=0.18\textheight]{figures/breakpair5.pdf}
\includegraphics[totalheight=0.18\textheight]{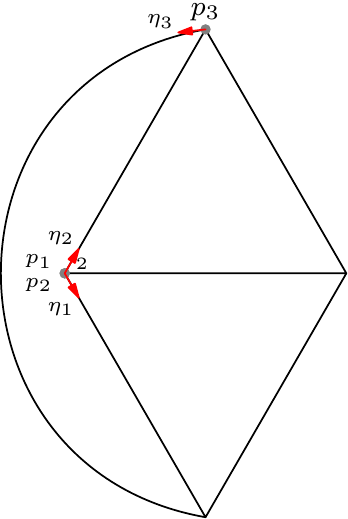}
\end{center}
\vspace{+20pt}
\begin{center}
\includegraphics[totalheight=0.18\textheight]{figures/breakfund5.pdf}
\includegraphics[totalheight=0.18\textheight]{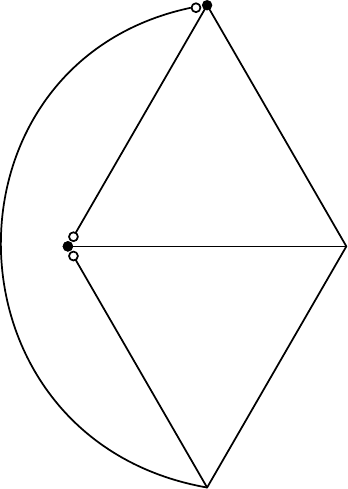}
\end{center}
\begin{center}
\includegraphics[totalheight=0.18\textheight]{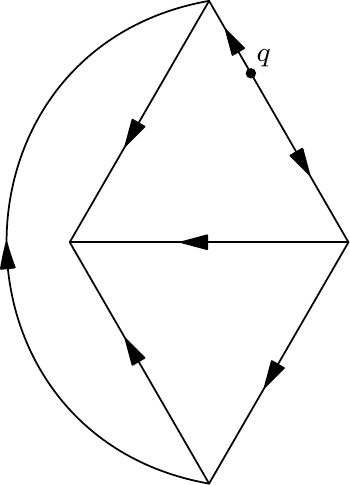}
\end{center}
  \caption{Two different break sets, their associated fundamental domains, and their identical associated $q$-connected orientation.}
  \label{fig:nonunique}
\end{figure}

\medskip

As a formal consequence of Lemma~\ref{lem:break_vs_qorient}, we have:

\begin{corollary}
Theorems \ref{thm:mainthm1} and \ref{thm:mainthm2} are equivalent
\end{corollary}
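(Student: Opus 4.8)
The plan is to treat the corollary as a purely formal consequence of Lemma~\ref{lem:break_vs_qorient}, so that no new geometric input about $\Gamma$ is required. First I would record the two structural features of the translation map $\phi_q \colon \Div^g(\Gamma) \to \Div^{g-1}(\Gamma)$, $D \mapsto D - (q)$. It is a bijection of the full divisor groups, and it is compatible with linear equivalence: since $\bigl(D - (q)\bigr) - \bigl(D' - (q)\bigr) = D - D'$, we have $D \sim D'$ if and only if $\phi_q(D) \sim \phi_q(D')$. Hence $\phi_q$ descends to a bijection $\bar\phi_q \colon \Pic^g(\Gamma) \to \Pic^{g-1}(\Gamma)$, namely translation by the class $[-(q)]$.

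Next I would assemble the commutative square relating the two ``canonical representative'' statements. Write $\mathcal{B}$ for the set of break divisors and $\mathcal{O}_q$ for the set of $q$-orientable divisors, and let $j_{\mathcal{B}} \colon \mathcal{B} \to \Pic^g(\Gamma)$ and $j_{\mathcal{O}_q} \colon \mathcal{O}_q \to \Pic^{g-1}(\Gamma)$ be the maps sending a divisor to its linear equivalence class. By Lemma~\ref{lem:break_vs_qorient}, $\phi_q$ restricts to a bijection $\mathcal{B} \to \mathcal{O}_q$; and since $j_{\mathcal{B}}, j_{\mathcal{O}_q}$ together with $\phi_q, \bar\phi_q$ are all induced by the single formula $D \mapsto D - (q)$, the square
\[
\begin{CD}
\mathcal{B} @>{j_{\mathcal{B}}}>> \Pic^g(\Gamma) \\
@V{\phi_q}VV @VV{\bar\phi_q}V \\
\mathcal{O}_q @>{j_{\mathcal{O}_q}}>> \Pic^{g-1}(\Gamma)
\end{CD}
\]
commutes, with both vertical arrows bijective.

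Finally I would translate the theorems into the language of this square and conclude. Theorem~\ref{thm:mainthm1} asserts precisely that $j_{\mathcal{B}}$ is a bijection: existence of a break representative in each degree $g$ class is surjectivity, and uniqueness is injectivity. Likewise Theorem~\ref{thm:mainthm2} asserts precisely that $j_{\mathcal{O}_q}$ is a bijection. Because the verticals are bijections and the square commutes, we have $j_{\mathcal{B}} = \bar\phi_q^{-1} \circ j_{\mathcal{O}_q} \circ \phi_q$, so $j_{\mathcal{B}}$ is injective (resp.\ surjective) if and only if $j_{\mathcal{O}_q}$ is; this is exactly the asserted equivalence.

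I do not expect a genuine obstacle here, as the argument is a one-line diagram chase once the square is in place. The only point demanding care is the compatibility claim: one must check that the bijection of Lemma~\ref{lem:break_vs_qorient} on representatives is literally the restriction of the same translation $D \mapsto D - (q)$ that implements the isomorphism $\Pic^g(\Gamma) \cong \Pic^{g-1}(\Gamma)$, since it is this coincidence of formulas that makes the square commute and lets injectivity and surjectivity transfer between the two statements.
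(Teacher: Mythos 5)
Your proposal is correct and is precisely the argument the paper intends: the paper states this corollary without proof as ``a formal consequence of Lemma~\ref{lem:break_vs_qorient},'' and your commutative square with the translation $D \mapsto D-(q)$ descending to the torsor bijection $\Pic^g(\Gamma) \to \Pic^{g-1}(\Gamma)$ is exactly that formal consequence made explicit. You also correctly identify the one point needing care --- that the bijection on representatives and the bijection on classes are induced by the same formula --- so nothing is missing.
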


\subsection{The canonical cell decomposition of $\Pic^g(\Gamma)$}
Let $G$ be a model for $\Gamma$. Given any spanning tree $T$ of $G$ we let $\Sigma^{o}_T \subset \Sigma$ be the product of the interiors of all edges of $\Gamma$ not in $T$:
\[
\Sigma^{o}_T := \prod_{e \not \in E(T)} e^{o} \ .
\]

An element of $\Sigma^{o}_T$ determines the spanning tree $T$ of $G$ uniquely. Therefore for distinct spanning trees $T$ and $T'$ of $G$ we have $\Sigma^{o}_T \cap \Sigma^{o}_{T'} =\emptyset$. Since $\{e^{o}\}_{e \not \in E(T)}$ consists of mutually disjoint segments, no two elements of $\Sigma^{o}_T$ are identified under the action of the symmetric group and we may consider $\Sigma^{o}_T$ as an (open) \emph{paralleletope} inside $\Div_{+}^{g}(\Gamma)=\Gamma^{(g)}$.

\medskip

We let $\Sigma^{o} \subset \Div_{+}^{g}(\Gamma)$ be the (disjoint) union of the sets $\Sigma^{o}_T$ for all spanning trees $T$ of $G$:

\[
\Sigma^{o}:= \bigcup_{T}{\Sigma^{o}_T}
\]

Note that $\Sigma^{o}$,  unlike $\Sigma$, depends on the choice of the model $G$.

\medskip

Divisors in $\Sigma^{o}$ have the following useful property:

\begin{lemma} \label{lem:inj}
For any $D \in \Sigma^{o}$ we have $|D|=\{D\}$.
\end{lemma}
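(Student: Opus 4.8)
The plan is to show that whenever $E = D + \Delta(f)$ is effective for some $f \in R(\Gamma)$, the function $f$ must be constant, so that $E = D$ and hence $|D| = \{D\}$. The driving observation, which I would establish first, is that \emph{$D$ is $q$-reduced for every point $q \in \Gamma$}, not merely for $q$ in its support. Granting this, fix an arbitrary $q \in \Gamma$: the effective divisor $E$ is in particular effective outside $q$ and satisfies $E \sim D$, so $E \in |D|_q$, and the principle of least action (Lemma~\ref{LeastActLemma}) forces $f$ to attain its global minimum at $q$. Since $q$ was arbitrary, $f$ attains its global minimum at \emph{every} point of $\Gamma$, so $f$ is constant, $\Delta(f) = 0$, and $E = D$. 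This last step is purely formal, so essentially all the content lies in verifying that $D$ is $q$-reduced for all $q$.

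So the heart of the proof is checking condition (R2) of Definition~\ref{def:reduced} for $D$ with respect to an arbitrary basepoint $q$. Write $D = (p_1) + \cdots + (p_g)$ with $p_i \in e_i^{o}$, where $e_1,\dots,e_g$ are the edges of $G$ not lying in the spanning tree $T$ and $\Gamma \setminus T = \bigsqcup_i e_i^{o}$; note that the $p_i$ are distinct, so $D(p_i) = 1$. Suppose (R2) fails, so there is a closed connected $X \subseteq \Gamma$ with $q \notin X$ and $D(p) \geq \outdeg_X(p)$ for every $p \in \partial X$. Each boundary point has $\outdeg_X(p) \geq 1$, so this forces $D(p) = 1$ and $\outdeg_X(p) = 1$ there; in particular $\partial X \subseteq \supp(D) = \{p_1,\dots,p_g\}$. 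Let $S = \{\, i : p_i \in \partial X \,\}$. Since each $p_i$ is interior to an edge and thus has exactly two tangent directions, $\outdeg_X(p_i) = 1$ means $X$ occupies exactly one of the two sides of $p_i$ locally.

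The contradiction comes from a connectivity statement, which I expect to be the main obstacle: for any subset $S \subseteq \{1,\dots,g\}$, the space $Y := \Gamma \setminus \{\, p_i : i \in S \,\}$ is connected. This holds because $T \cup \bigcup_{i \notin S} \bar{e}_i$ is a connected closed subgraph of $\Gamma$ (it contains the spanning tree $T$) that is contained in $Y$, while every remaining point of $Y$ lies on one of the two half-open pieces of some $e_i$ with $i \in S$, each of which is attached to a vertex of $T$; hence all of $Y$ is connected to $T$. Crucially, this uses that the $p_i$ are interior to \emph{non-tree} edges. Granting connectivity, I would finish by the following bookkeeping: since $\partial X = \{\, p_i : i \in S\,\}$, we have $X \cap Y = X \setminus \partial X = \operatorname{int}(X)$, which is open in $Y$; and its complement in $Y$ equals $\Gamma \setminus X$, which is open because $X$ is closed. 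Thus $\operatorname{int}(X)$ is clopen in the connected space $Y$, so it is empty or all of $Y$. If empty, then $X \subseteq \partial X$ is a single point $p_i$, forcing $\outdeg_X(p_i) = 2 \neq 1$; if it equals $Y$, then $X \supseteq \overline{Y} = \Gamma \ni q$. Either outcome contradicts the hypotheses, so (R2) holds for every $q$, and combined with the least-action argument above this yields $|D| = \{D\}$.
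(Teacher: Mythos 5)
Your proof is correct and follows essentially the same route as the paper: the paper's proof likewise observes that any $D \in \Sigma^{o}$ is $q$-reduced for \emph{every} $q \in \Gamma$ and then applies the least action principle (Lemma~\ref{LeastActLemma}) at two arbitrary points to force $f$ to be constant. The only difference is that the paper dismisses the $q$-reducedness of $D$ with ``one sees easily,'' whereas you verify condition (R2) in full via the connectivity of $\Gamma$ minus interior points of non-tree edges --- a worthwhile elaboration, but not a different argument.
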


\begin{proof}
The proof is an application of least action principle (Lemma~\ref{LeastActLemma}). Let $D \in \Sigma^{o}_T$ for some spanning tree $T$ of the model $G$. Pick $p,q \in \Gamma$. Then one sees easily that $D$ is both $p$-reduced and $q$-reduced.
If $E=D+\Delta(f) \in |D|$ then by the least action principle we get $f(p) \leq f(q)$ and $f(q) \leq f(p)$. Thus $f$ is constant and $E=D$.
\end{proof}

Recall that the map $\pi: \Div_{+}^g(\Gamma) \rightarrow \Pic^g(\Gamma)$ is piecewise linear. It follows that the open cell $\C^{o}_T :=\pi(\Sigma^{o}_T)$ is also an (open) paralleletope inside the torus $\Pic^g(\Gamma)$. By Lemma~\ref{lem:inj}, the restriction of $\pi$ to each $\Sigma^{o}_T$ is injective with image $\C^{o}_T$. Moreover for distinct spanning trees $T$ and $T'$ of $G$ we have $\C^{o}_T \cap \C^{o}_{T'} =\emptyset$ because $\Sigma^{o}_T \cap \Sigma^{o}_{T'} =\emptyset$.
It follows that the restriction of the map $\pi$ to $\Sigma^{o} \subset \Div_{+}^{g}(\Gamma)$ is injective.

Let $\C^{o} \subset \Pic^{g}(\Gamma)$ be the (disjoint) union of the sets $\C^{o}_T$ for all spanning trees $T$ of $G$. Let $\C:=\overline{\C^{o}} =\bigcup_{T} \C_T$ and $\C_T:=\overline{\C^{o}_T}$ denote the topological closures of $\C^{o}$ and $\C^{o}_T$, respectively, inside $\Pic^{g}(\Gamma)$.
By Theorem \ref{thm:mainthm1} (stated in Section~\ref{sec:Introduction} and proved in Section~\ref{sec:orientations} below),
we have $\C =  \Pic^{g}(\Gamma)$.
Since $\Sigma = \bigcup \Sigma_T$ and $\Pic^g(\Gamma)$ are compact Hausdorff spaces, it also follows from Theorem \ref{thm:mainthm1} that
the canonical map from $\Sigma$ to $\Pic^g(\Gamma)$ is a homeomorphism.

\section{Orientations and divisors}
\label{sec:orientations}

\subsection{Finite graphs}
\label{sec:orientations.finite.graphs}
Assume $G$ is a finite, connected multigraph. As usual we denote the set of vertices by $V(G)$ and the set of edges by $E(G)$.
In what follows $S$ will always denote a subset of $V(G)$.
We denote by $G[S]$ the induced subgraph of $G$ with vertex set $S$.
Let $\chi(S)$ denote the topological Euler characteristic of $G[S]$, which is equal to the number of vertices of $G[S]$ minus the number of edges of $G[S]$.  (If $G[S]$ is connected then $\chi(S) = 1-g(S)$, where $g(S)$ is the \emph{genus}, or first Betti number, of $G[S]$.)

\medskip

Given a divisor $D \in \Div(G)$ we define
\[\chi(S,D)=\deg(D|_S)+\chi(S)  , \]
where $D|_S$ denotes the restriction of $D$ to $G[S]$, i.e., if $D=\sum_{p \in V(G)}{a_p(p)}$ then $D|_S=\sum_{p \in S}{a_p(p)}$.
For $S = V(G)$, we write $\chi(G,D)$ instead of $\chi(V(G),D)$.

\medskip

If $S$ and $T$ are disjoint subsets of $V(G)$, we define $e(S,T)$ to be the number of edges of $G$ with one end in $S$ and the other end in $T$.
We define $e(S)$ to be the number of edges both of whose endpoints belong to $S$.

\medskip

A \emph{submodular function} is function $f$ from the collection of subsets of $V(G)$ to $\RR$ such that for any subsets $S,T$ of $V(G)$, we have
\begin{equation} \label{eq:submod}
f(S) + f(T) \geq f(S\cup T) + f(S\cap T).
\end{equation}
If equality holds, then $f$ is called \emph{modular}. The following lemma is an immediate consequence of \eqref{eq:submod}.

\begin{lemma} \label{lem:intersection}
Subsets where a submodular function takes its minimum value are closed under intersection and union.
\end{lemma}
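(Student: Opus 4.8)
The plan is to argue directly from the defining submodular inequality \eqref{eq:submod}, using nothing more than the fact that a minimum value cannot be undercut. Let $m = \min_U f(U)$ denote the minimum value of $f$ over all subsets of $V(G)$, and suppose $S$ and $T$ are two subsets on which this minimum is attained, so that $f(S) = f(T) = m$. My goal is to show that $f(S \cup T) = f(S \cap T) = m$ as well, which is exactly the assertion that the collection of minimizers is closed under union and intersection.

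First I would write down the submodular inequality for this particular pair, obtaining
\[
2m = f(S) + f(T) \geq f(S \cup T) + f(S \cap T).
\]
Next I would invoke minimality in the opposite direction: since $m$ is the global minimum, each of $f(S \cup T)$ and $f(S \cap T)$ is at least $m$, so their sum is at least $2m$. Chaining these two inequalities forces
\[
2m \geq f(S \cup T) + f(S \cap T) \geq 2m,
\]
whence equality holds throughout. Combining $f(S \cup T) + f(S \cap T) = 2m$ with the individual lower bounds $f(S \cup T) \geq m$ and $f(S \cap T) \geq m$ pins both terms down to exactly $m$.

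Since $f(S \cup T) = m$ and $f(S \cap T) = m$, both $S \cup T$ and $S \cap T$ lie in the set of minimizers, completing the proof. I do not anticipate any real obstacle here: the statement is an immediate formal consequence of submodularity together with the definition of the minimum, and the only subtlety worth flagging is the standard ``squeeze'' step where one must use minimality in \emph{both} directions to upgrade the single submodular inequality into an equality. I would present the argument in this compressed three-line form rather than belaboring it.
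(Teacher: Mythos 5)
Your proof is correct and is precisely the argument the paper has in mind: the paper gives no separate proof, stating only that the lemma is ``an immediate consequence of \eqref{eq:submod},'' and your squeeze (submodularity gives $f(S\cup T)+f(S\cap T)\leq 2m$, minimality gives $\geq 2m$, so both terms equal $m$) is that immediate consequence spelled out. Nothing is missing.
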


\begin{lemma}\label{lem:SubmodularLem}
For any divisor $D$, the function $\chi(\cdot, D)$ is submodular.
\end{lemma}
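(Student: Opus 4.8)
The plan is to decompose $\chi(\cdot, D)$ into a modular part and a submodular part, reducing the entire statement to the single fact that the edge-counting function $e(\cdot)$ is \emph{supermodular}.

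First I would unwind the definitions. Since $\chi(S) = |S| - e(S)$ (the number of vertices minus the number of edges of the induced subgraph $G[S]$) and $\deg(D|_S) = \sum_{p \in S} a_p$ where $D = \sum_p a_p(p)$, we can write
\[
\chi(S, D) = \sum_{p \in S}(a_p + 1) - e(S).
\]
The first summand is of the form $\sum_{p \in S} h(p)$ for the fixed function $h(p) = a_p + 1$, and any such set-function is modular: since $\mathbf{1}_S + \mathbf{1}_T = \mathbf{1}_{S \cup T} + \mathbf{1}_{S \cap T}$ pointwise, summing $h$ against both sides gives equality in \eqref{eq:submod}. As a sum of a modular function and a submodular function is submodular, it suffices to show that $-e(\cdot)$ is submodular, equivalently that $e(\cdot)$ is supermodular:
\[
e(S) + e(T) \leq e(S \cup T) + e(S \cap T).
\]

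The key step is therefore this inequality for $e$. I would prove the sharper identity
\[
e(S \cup T) + e(S \cap T) = e(S) + e(T) + e(S \setminus T, \, T \setminus S),
\]
from which supermodularity is immediate because $e(S \setminus T, T \setminus S) \geq 0$. The identity is checked edge by edge: partition $V(G)$ into the four regions $S \cap T$, $S \setminus T$, $T \setminus S$, and $V(G) \setminus (S \cup T)$, and for a fixed edge compare its contribution to the two sides according to which regions its endpoints lie in. A short case analysis shows that every edge contributes equally to both sides except those with one endpoint in $S \setminus T$ and the other in $T \setminus S$, which contribute $1$ to $e(S \cup T)$ but $0$ to $e(S) + e(T)$ and $0$ to $e(S \cap T)$; these are exactly the edges counted by $e(S \setminus T, T \setminus S)$.

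The only real content is the supermodularity of $e$; the modularity of the vertex and degree terms is formal. The main (and rather minor) obstacle is keeping the edge case-analysis organized, which the four-region bookkeeping handles cleanly. Alternatively, one can verify the per-edge inequality $\mathbf{1}[u,v \in S] + \mathbf{1}[u,v \in T] \leq \mathbf{1}[u,v \in S \cup T] + \mathbf{1}[u,v \in S \cap T]$ directly and sum over all edges of $G$.
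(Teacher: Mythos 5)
Your proof is correct and takes exactly the same route as the paper: decompose $\chi(S,D) = \deg(D|_S) + |S| - e(S)$ into modular vertex/degree terms plus the submodular term $-e(S)$. The only difference is that you also write out the edge-by-edge verification of the identity $e(S\cup T) + e(S\cap T) = e(S) + e(T) + e(S\setminus T,\, T\setminus S)$, which the paper defers to its Lemma~\ref{lem:EulerLem} and there cites as a well-known strengthening of the submodularity of $e(\cdot)$.
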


\begin{proof}
By definition, we have $\chi(S,D) = \deg(D|_S) + |S| - e(S)$.  One easily checks that
$f_1(S)=\deg(D|_S)$ and $f_2(S)=|S|$ are modular and that $f_3(S)=- e(S)$ is submodular, and the result follows.
\end{proof}

More precisely, we have the following quantitative refinement of submodularity:

\begin{lemma}\label{lem:EulerLem}
For any subsets $S,T$ of $V(G)$, we have
\begin{equation} \label{eq:strongsubmod}
\chi(S,D) + \chi(T,D) = \chi(S\cup T,D) + \chi(S \cap T,D) + e(S \backslash T, T \backslash S).
\end{equation}
In particular, if $S \cap T = \emptyset$ then
\begin{equation} \label{eq:EulerUnion}
\chi(S \cup T,D)=\chi(S,D) + \chi(T,D) -e(S,T).
\end{equation}
\end{lemma}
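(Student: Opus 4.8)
The plan is to reduce \eqref{eq:strongsubmod} to a purely edge-theoretic identity and then verify that identity by a short case analysis on edges. Recall from the proof of Lemma~\ref{lem:SubmodularLem} that $\chi(S,D) = \deg(D|_S) + |S| - e(S)$, and that the functions $S \mapsto \deg(D|_S)$ and $S \mapsto |S|$ are modular (this is just inclusion--exclusion applied to $\sum_{p \in S} a_p$ and to $|S|$). Hence the contributions of these two terms to the two sides of \eqref{eq:strongsubmod} cancel, and the identity becomes equivalent to the single edge-counting statement
\[
e(S \cup T) + e(S \cap T) - e(S) - e(T) = e(S \setminus T,\, T \setminus S).
\]

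To prove this, I would partition $V(G)$ into the four disjoint regions $A = S \cap T$, $B = S \setminus T$, $C = T \setminus S$, and $E = V(G) \setminus (S \cup T)$, and compute, for an edge with endpoints $u$ and $v$, its net contribution
\[
\mathbf{1}[u,v \in S \cup T] + \mathbf{1}[u,v \in A] - \mathbf{1}[u,v \in S] - \mathbf{1}[u,v \in T]
\]
to the left-hand side, according to which regions contain $u$ and $v$. Running through all (unordered) pairs of regions, one checks that this alternating sum vanishes in every configuration except when one of $u,v$ lies in $B$ and the other in $C$, in which case it equals $1$. Summing over all edges therefore yields exactly $e(B,C) = e(S \setminus T, T \setminus S)$, as required.

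Finally, \eqref{eq:EulerUnion} is the special case $S \cap T = \emptyset$: then $S \setminus T = S$ and $T \setminus S = T$, so the correction term becomes $e(S,T)$, while $\chi(S \cap T, D) = \chi(\emptyset, D) = 0$ since the empty induced subgraph contributes $\deg(D|_\emptyset) = 0$ and $\chi(\emptyset) = 0$. Rearranging \eqref{eq:strongsubmod} then gives the displayed formula. I do not anticipate any genuine obstacle; the only point needing care is to make the case analysis exhaustive --- in particular, confirming that edges with an endpoint in $E$, or with both endpoints in one region, all contribute $0$ --- so that no configuration other than a $B$--$C$ edge survives the alternating sum.
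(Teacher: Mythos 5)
Your proposal is correct and follows essentially the same route as the paper: both reduce \eqref{eq:strongsubmod}, via modularity of $\deg(D|_S)$ and $|S|$, to the edge-counting identity $e(S) + e(T) + e(S\backslash T, T\backslash S) = e(S\cap T) + e(S\cup T)$. The only difference is that the paper cites this identity as a well-known strengthening of the submodularity of $e(\cdot)$, whereas you verify it directly by the (correct and exhaustive) case analysis on edge endpoints.
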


\begin{proof}
Since we have $\chi(S,D) = \deg(D|_S) + |S| - e(S)$, and both $\deg(D|_S)$ and $|S|$ are modular functions, it suffices to prove $e(S) + e(T) + e(S \backslash T, T \backslash S) = e(S \cap T) + e(S \cup T)$. This is a well-known stronger version of the submodularity of $e(\cdot)$.
\end{proof}

\medskip

For a given divisor $D \in \Div(G)$ we define

\[\chi_D = \min\{\chi(S,D): \, \emptyset \ne S \subsetneq V(G) \} \ ,\]

\[
\mathbf{S}(D) = \{ \emptyset \ne  S \subsetneq V(G) : \, \chi(S,D) = \chi_D \} \ .
\]

\medskip

\begin{corollary} \label{cor:minimal1}
Let $D \in \Div(G)$ be a divisor such that $\chi(G,D) \geq 0$ and $\chi_D < 0$. Then $\mathbf{S}(D)$ has a unique minimal element (with respect to inclusion).
\end{corollary}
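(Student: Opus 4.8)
The plan is to show that $\mathbf{S}(D)$ is closed under intersection, from which the conclusion follows immediately: since $V(G)$ is finite, $\mathbf{S}(D)$ is a finite collection of sets, and if it is closed under pairwise intersection then $\bigcap_{S \in \mathbf{S}(D)} S$ is itself a member of $\mathbf{S}(D)$ and is contained in every other element, hence is the unique minimal element. Note that $\mathbf{S}(D)$ is nonempty because the hypothesis $\chi_D < 0$ guarantees in particular that the defining minimum is attained over the (nonempty) family of proper nonempty subsets.

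The natural tool is Lemma~\ref{lem:intersection}, which asserts that the subsets on which a submodular function attains its minimum are closed under intersection and union; together with the submodularity of $\chi(\cdot, D)$ (Lemma~\ref{lem:SubmodularLem}) this would give the result \emph{for free} were it not for the fact that the minimum defining $\mathbf{S}(D)$ is taken only over \emph{proper nonempty} subsets. So the actual content is to check that the degenerate cases do not arise: for any $S, T \in \mathbf{S}(D)$ I must rule out both $S \cap T = \emptyset$ and $S \cup T = V(G)$. Once this is done, $S \cap T$ and $S \cup T$ are again proper nonempty subsets, so each satisfies $\chi(\cdot, D) \geq \chi_D$, and the submodular inequality $\chi(S,D) + \chi(T,D) \geq \chi(S \cup T, D) + \chi(S \cap T, D)$ is forced to be an equality, placing both $S \cap T$ and $S \cup T$ in $\mathbf{S}(D)$.

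Excluding these two degeneracies is the main obstacle, and it is exactly here that the hypotheses $\chi(G,D) \geq 0$ and $\chi_D < 0$ are used. For the case $S \cap T = \emptyset$ I would invoke the quantitative identity \eqref{eq:EulerUnion}, which gives $\chi(S \cup T, D) = 2\chi_D - e(S,T)$ with $e(S,T) \geq 0$: if $S \cup T$ were still proper this contradicts $\chi(S \cup T, D) \geq \chi_D$ (it would force $e(S,T) \leq \chi_D < 0$), while if $S \cup T = V(G)$ it gives $\chi(G,D) = 2\chi_D - e(S,T) \leq 2\chi_D < 0$, contradicting $\chi(G,D) \geq 0$; either way $S \cap T \neq \emptyset$. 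For the case $S \cup T = V(G)$ I would instead substitute $\chi(S \cup T, D) = \chi(G,D) \geq 0$ into the submodular inequality, obtaining $\chi(S \cap T, D) \leq 2\chi_D - \chi(G,D)$; since $S \cap T$ is a nonempty (by the first case) proper subset we have $\chi(S \cap T, D) \geq \chi_D$, and combining these yields $\chi(G,D) \leq \chi_D < 0$, again contradicting $\chi(G,D) \geq 0$. With both degeneracies excluded, closure under intersection is established and the argument concludes as in the first paragraph.
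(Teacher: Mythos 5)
Your proof is correct, and each step checks out: the nonemptiness of $\mathbf{S}(D)$, the exclusion of $S \cap T = \emptyset$ via \eqref{eq:EulerUnion}, the exclusion of $S \cup T = V(G)$ via submodularity, and the finiteness argument producing $\bigcap_{S \in \mathbf{S}(D)} S$ as the unique minimal element. The route differs from the paper's in execution, though not in substance. The paper minimizes $\chi(\cdot, D)$ over \emph{all} subsets of $V(G)$, including $\emptyset$ and $V(G)$: since $\chi(\emptyset, D) = 0$ and $\chi(G,D) \geq 0$ while $\chi_D < 0$, neither extreme subset can be a minimizer, so the unrestricted minimizers are exactly the elements of $\mathbf{S}(D)$, and Lemma~\ref{lem:intersection} applies verbatim to give the intersection of all minimizers as the unique minimal element --- a two-line argument. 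Your proof is the unfolded version of this: staying inside the family of proper nonempty subsets, you must re-derive closure under intersection by hand, ruling out the two degenerate configurations case by case; those exclusions are precisely the observations $\chi(\emptyset, D) = 0 > \chi_D$ and $\chi(G,D) \geq 0 > \chi_D$ in disguise. In particular, in your first case the sharper identity \eqref{eq:EulerUnion} is more than you need --- plain submodularity \eqref{eq:submod}, with the convention $\chi(\emptyset, D) = 0$, already yields $2\chi_D \geq \chi(S \cup T, D)$ and the same contradictions. What your version buys is a self-contained argument that never enlarges the domain of the minimum; what the paper's buys is brevity, at the cost of the small (but reusable) trick of extending the minimization to the full power set, the same device that makes Lemma~\ref{lem:maximal} a one-liner later on.
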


\begin{proof}
Since $\chi(S,D)$ is a submodular function, by Lemma \ref{lem:intersection} there is a unique minimal subset $S$ for which $\chi(\cdot, D)$ takes on its minimum value, namely the intersection of all such subsets. Since $\chi (\emptyset,D) = 0$ and $\chi(G,D) \geq 0$, while $\chi_D < 0$, we conclude that the unique minimal element is a proper nonempty subset of $V(G)$, hence lies in $\mathbf{S}(D)$.
\end{proof}

\medskip

Recall from Section~\ref{sec:Introduction} that a divisor $D \in \Div(G)$ is called \emph{orientable} if there exists an orientation $\O$ on $G$ such that at every point $p \in V(G)$ we have $D(p)=\indeg_{\O} {(p)-1}$.
It is easy to see that if $D$ is orientable then $\deg(D)=g-1$, which is equivalent to saying that $\chi(G,D)=0$.
The following result strengthens this observation.\footnote{Although we discovered this result independently, Spencer Backman and the anonymous referee both informed us that Theorem~\ref{thm:orient} is in fact a reformulation of a classical result of Hakimi \cite{Hakimi} (see also \cite[Theorem 61.1]{Schrijver}).  Spencer Backman shows in \cite[Theorem 7.3]{Backman} that Theorem~\ref{thm:orient} is equivalent to the well-known Max-Flow Min-Cut Theorem.}

\begin{theorem}[Hakimi] \label{thm:orient}
A divisor $D \in \Div(G)$ is orientable if and only if $\chi(G,D)=0$ and $\chi(S,D) \geq 0$ for all non-empty subsets $S$ of $V(G)$.
(Equivalently, $D$ is orientable if and only if $\deg(D)=g-1$ and $\chi_D \geq 0$.)
\end{theorem}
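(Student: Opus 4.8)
The plan is to prove both directions of the equivalence, with the forward direction (orientability implies the $\chi$ conditions) being the easy one and the converse being where the real work lies.

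For the forward direction, suppose $D$ is orientable via an orientation $\O$, so that $D(p) = \indeg_\O(p) - 1$ for every $p \in V(G)$. For any nonempty $S \subseteq V(G)$, I would compute $\deg(D|_S) = \sum_{p \in S} (\indeg_\O(p) - 1)$. The key observation is that $\sum_{p \in S} \indeg_\O(p)$ counts, with multiplicity, all edges oriented into a vertex of $S$; this equals $e(S)$ (edges internal to $S$, each contributing once to some vertex of $S$) plus the number of edges from $V(G) \setminus S$ into $S$. Hence $\deg(D|_S) \geq e(S) - |S|$, which is exactly $\chi(S,D) = \deg(D|_S) + |S| - e(S) \geq 0$. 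Taking $S = V(G)$, every edge is internal, so $\sum_{p} \indeg_\O(p) = |E(G)|$ and $\chi(G,D) = |E(G)| - |V(G)| + |V(G)| - |E(G)| = 0$ exactly (using $g = |E| - |V| + 1$). So orientability forces $\chi(G,D) = 0$ and $\chi(S,D) \geq 0$.

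For the converse, assume $\chi(G,D) = 0$ and $\chi_D \geq 0$, and I must construct an orientation. The natural approach is induction on $\deg^+(D) := \sum_p \max(D(p), 0)$ or on some measure of how far $D$ is from being ``flat.'' The idea is to build the orientation greedily: I want to realize $D(p) + 1 = \indeg_\O(p)$ at each vertex. I would set up a transportation/flow problem and try to orient one edge at a time. The cleanest inductive strategy is: if $\chi_D > 0$ strictly (i.e. $\chi(S,D) > 0$ for all proper nonempty $S$) and $\chi(G,D) = 0$, then one can find a vertex $v$ with $D(v) \geq 0$ and an adjacent edge such that redirecting or assigning it maintains all the inequalities; alternatively, use the fact that $\chi(G,D) = 0$ means $\deg(D) = g - 1$ and try to peel off. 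The main obstacle is handling the \emph{tight sets} — those $S$ with $\chi(S,D) = 0$ — since orienting edges across the boundary of such a set is forced, and I must verify that the forced choices remain globally consistent. Here Lemma~\ref{lem:intersection} is the crucial tool: the tight sets (where the submodular function $\chi(\cdot,D)$ attains its minimum value $0$, once we include $\emptyset$ and $V(G)$) are closed under union and intersection, so they form a distributive lattice, and I can induct by orienting edges inside a minimal tight set first and then contracting it.

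I expect the hard part to be the bookkeeping in the inductive step: after orienting edges within or across a tight set $S$, I need to check that the resulting ``reduced'' divisor on the contracted graph $G/S$ (or on $G[S]$ and $G/S$ separately) still satisfies the hypotheses $\chi = 0$ and $\chi \geq 0$, so that the inductive hypothesis applies to strictly smaller graphs. Concretely, if $S$ is a minimal tight set, I would orient all boundary edges $e(V(G)\setminus S, S)$ into $S$ (these are forced), recursively orient $G[S]$ using the induced divisor $D|_S$ adjusted by these incoming boundary edges, and recursively orient the contraction $G/S$; the submodularity equality \eqref{eq:strongsubmod} is what guarantees the $\chi$-conditions are inherited by both pieces. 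An alternative, possibly cleaner route — which I suspect the authors may take given the footnote — is to invoke Hall's theorem or Max-Flow Min-Cut directly: construct a bipartite-style feasibility problem where orienting edges to achieve prescribed indegrees is a flow, and show the cut condition $\chi(S,D) \geq 0$ is precisely the feasibility criterion. That would reduce the entire converse to a citation, but I would still need to translate the condition $\chi(S,D) \geq 0$ into the exact cut inequality required by the flow theorem, and that translation is the step most prone to off-by-one errors with the $\chi(S) = |S| - e(S)$ term.
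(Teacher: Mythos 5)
First, a point of calibration: the paper contains no proof of Theorem~\ref{thm:orient} at all. It attributes the statement to Hakimi \cite{Hakimi} (see also \cite[Theorem 61.1]{Schrijver}), and its footnote records Backman's observation \cite{Backman} that the theorem is equivalent to Max-Flow Min-Cut. So there is no in-paper argument to compare against, and any complete proof you give is ``different from the paper'' by default. Your forward direction is correct and complete: for any realizing orientation $\O$ one has $\sum_{p\in S}\indeg_{\O}(p)=e(S)+b_{\rm in}(S)$, where $b_{\rm in}(S)$ is the number of boundary edges oriented into $S$, whence $\chi(S,D)=b_{\rm in}(S)\geq 0$, with $\chi(G,D)=0$ since every edge is internal to $V(G)$. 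Your second suggested route for the converse is also sound and is precisely the one the footnote points to: build the network with source $\to$ edge-nodes (capacity $1$), edge-node $\to$ its two endpoints (capacity $1$), vertex $p\to$ sink (capacity $D(p)+1$, which is nonnegative because $\chi(\{p\},D)=D(p)+1\geq 0$ as $G$ has no loops); the min-cut condition for a vertex set $S$ is $\sum_{p\in S}(D(p)+1)\geq e(S)$, i.e.\ exactly $\chi(S,D)\geq 0$, and $\chi(G,D)=0$ says the sink capacities sum to $|E(G)|$, so an integral maximum flow is an orientation realizing $D$. The translation you worried about works out with no off-by-one trouble.

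However, your primary route --- the tight-set recursion --- contains a genuine error: the forced orientation across a tight set is \emph{backwards}. The identity $\chi(S,D)=b_{\rm in}(S)$ shows that $\chi(S,D)=0$ forces every boundary edge to point \emph{out of} $S$, not into it; the paper itself uses exactly this fact in the proof of Proposition~\ref{prop:q_orient} (``all edges connecting $S$ to $\bar{S}$ will be directed away from $S$ in any associated orientation''). As written, your recursion fails at the first step: if all $k=e(S,V(G)\setminus S)\geq 1$ boundary edges enter the minimal tight set $S$, the induced problem on $G[S]$ asks the $e(S)$ internal edges to supply total indegree $\deg(D|_S)+|S|-k=e(S)-k<e(S)$, which is impossible. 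With the direction corrected the induction does go through ($G[S]$ inherits the hypotheses verbatim since $\chi_{G[S]}(S',D|_S)=\chi_G(S',D)$ for $S'\subseteq S$, and $G/S$ with the contracted vertex assigned coefficient $-1$ inherits them via \eqref{eq:strongsubmod} and \eqref{eq:EulerUnion}), but you would still owe the base case you left vague: when $V(G)$ is the only tight set there is no proper $S$ to recurse on, and ``one can find a vertex $v$ \dots\ such that redirecting or assigning it maintains all the inequalities'' is an assertion, not a proof. It can be made to work --- orient any edge $uv$ toward $v$, delete it, and replace $D$ by $D-(v)$; strictness $\chi(S,D)\geq 1$ on proper nonempty sets absorbs the unit loss, while for $S=V(G)$ both $\deg(D)$ and $e(S)$ drop by one --- but that verification is the actual content of the step. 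In sum: keep your forward direction and the max-flow reduction, and either repair the tight-set argument as above or drop it.
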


\medskip

\begin{remark}
It follows from Lemma~\ref{lem:EulerLem} that one only needs to check the condition $\chi(S, D) \geq 0$ for subsets $S$ whose induced subgraph is \emph{connected}: if the condition is satisfied for all connected components of a set, it is automatically satisfied for the whole set by \eqref{eq:EulerUnion}.
\end{remark}

\medskip

\begin{theorem} \label{thm:eqiv_orient}
Every divisor $D \in \Div^{g-1}(G)$ is linearly equivalent to some orientable divisor.
\end{theorem}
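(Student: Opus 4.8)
The plan is to deduce the theorem from Hakimi's criterion (Theorem~\ref{thm:orient}) by exhibiting an explicit chip-firing procedure. Since linear equivalence preserves degree, any $D' \sim D$ with $\deg(D) = g-1$ automatically satisfies $\chi(G, D') = 0$, so by Theorem~\ref{thm:orient} it suffices to produce a divisor $D' \sim D$ with $\chi_{D'} \geq 0$. Thus the whole problem reduces to showing that the quantity $\chi_D$ can be pushed up to a nonnegative value through linear equivalence.

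The basic move is as follows. If $\chi_D \geq 0$ we are already done. Otherwise $\chi_D < 0$, and since $\chi(G,D) = 0 \geq 0$, Corollary~\ref{cor:minimal1} supplies a unique minimal element $S \in \mathbf{S}(D)$. I would replace $D$ by $D' := D + \Delta(\mathbf{1}_S)$, i.e.\ fire the complement $V(G)\setminus S$, which moves one chip into $S$ across each boundary edge; in particular $\deg(D'|_S) = \deg(D|_S) + e(S, V(G)\setminus S)$, so $\chi(S,D') = \chi_D + e(S,V(G)\setminus S) > \chi_D$ since $G$ is connected and $S$ is a proper nonempty subset.

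The technical core is a single identity controlling how this move affects every test set. Combining the refined Euler formula (Lemma~\ref{lem:EulerLem}) with the explicit description of $\Delta(\mathbf{1}_S)$, one obtains, for each nonempty proper $T \subseteq V(G)$,
\[
\chi(T, D') = \chi(S \cup T, D) + \chi(S \cap T, D) + e\bigl(S \cap T,\, V(G) \setminus (S \cup T)\bigr) - \chi_D .
\]
Here $\chi(S\cup T, D) \geq \chi_D$ (as $S \cup T$ is nonempty, using $\chi(G,D)=0 > \chi_D$ in the boundary case $S \cup T = V(G)$), the edge term is nonnegative, and $\chi(S \cap T, D) \geq \chi_D$ with equality forcing $S \cap T = S$ by minimality of $S$ (note $\chi(\emptyset, D) = 0 > \chi_D$). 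Summing these bounds gives $\chi(T, D') \geq \chi_D$, hence $\chi_{D'} \geq \chi_D$: the move never lowers the minimum. Tracking the equality conditions shows that any set computing $\chi_{D'}$ in the case $\chi_{D'} = \chi_D$ must strictly contain $S$, whence, applying Corollary~\ref{cor:minimal1} to $D'$, the unique minimal minimizer strictly grows.

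It remains to establish termination, which I expect to be the main obstacle. I would iterate the move and use the lexicographic monovariant $(\chi_D, |S|)$, where $S$ denotes the current minimal minimizer: by the previous paragraph each step either strictly increases $\chi_D$ (by at least $1$, since these are integers) or leaves $\chi_D$ fixed while strictly increasing $|S|$. Since $|S| \leq |V(G)| - 1$ and the procedure runs only while $\chi_D < 0$, this pair can strictly increase only finitely many times; the procedure therefore halts at a divisor $D' \sim D$ with $\chi_{D'} \geq 0$, which is orientable by Theorem~\ref{thm:orient}. The delicate points are pinning down the signs in $\Delta(\mathbf{1}_S)$ and the equality analysis in the displayed identity exactly, and then packaging them into a monovariant that is manifestly bounded.
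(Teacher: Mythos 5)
Your proposal is correct and follows essentially the same route as the paper: fire the complement of the unique minimal minimizer $S \in \mathbf{S}(D)$ (Corollary~\ref{cor:minimal1}), show via Lemma~\ref{lem:EulerLem} that $\chi_{D'} \geq \chi_D$ with equality forcing the new minimal minimizer to strictly contain $S$, and terminate by the lexicographic monovariant $(\chi_D, |S|)$. The only difference is cosmetic: your single identity $\chi(T,D') = \chi(S\cup T,D) + \chi(S\cap T,D) + e\bigl(S\cap T,\,V(G)\setminus(S\cup T)\bigr) - \chi_D$, which checks out, consolidates the four-case analysis in the paper's Claim into one computation.
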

\begin{proof}
If $D_0=D$ is not orientable then Theorem~\ref{thm:orient} guarantees that there exists a subset $\emptyset \ne S \subsetneq V(G)$ with $\chi(S,D) <0$. It follows from Corollary~\ref{cor:minimal1} that there is a unique minimal element (with respect to inclusion) in $\mathbf{S}(D_0) = \{ \emptyset \ne  S \subsetneq V(G) : \, \chi(S,D_0) \text{ is minimal}\}$. In other words, there is a unique non-empty set $S_0 \subsetneq V(G)$ such that $\chi(S_0,D_0)$ is minimal and which is contained in any other vertex set
$S$ with this property.

\medskip

Let $D_1$ be the divisor obtained from $D_0$ by simultaneously firing all vertices in $\bar{S_0}$.

{\bf Claim.} For all $\emptyset \ne S \subsetneq V(G)$ we have $\chi(S,D_1) \geq \chi(S_0,D_0)$. Moreover, if equality holds then $S \supsetneq S_0$.

\medskip

To prove this claim we consider the following cases:

\begin{itemize}
\item[(1)] $S \subsetneq S_0$.
In this case, $\chi(S,D_0) > \chi(S_0,D_0)$ (by the minimality of $S_0$) and $\chi(S,D_1) \geq \chi(S,D_0)$ (by the construction of $D_1$).

\medskip

\item[(2)] $S = S_0$.
In this case,  it follows from definitions that $\chi(S,D_1) = \chi(S_0,D_0)+e(S_0 , \bar{S_0}) > \chi(S_0,D_0)$.

\medskip

\item[(3)] $S \subseteq \bar{S_0}$.
In this case, it follows from definitions and \eqref{eq:EulerUnion} that
\[
\chi(S,D_1) = \chi(S,D_0)-e(S,S_0)=\chi(S \cup S_0,D_0)-\chi(S_0,D_0) \ .
\]
Therefore $\chi(S,D_1) \geq 0 >\chi(S_0,D_0)$.

\medskip

\item[(4)] $S \cap S_0 \ne \emptyset$ and $S \cap \bar{S_0} \ne \emptyset$.
In this case, let $A=S \cap S_0$ and $B=S \cap \bar{S_0}$. Then
\[
\chi(B,D_1)=\chi(B,D_0)-e(B,S_0)=\chi(B \cup S_0,D_0)-\chi(S_0,D_0) \geq 0
\]
and we have
\[
\begin{aligned}
\chi(S,D_1)&=\chi(A \cup B,D_1) \\
&= \chi(A,D_1)+\chi(B,D_1)- e(A,B)\\
& \geq \chi(A,D_1)- e(A,B)\\
& = \chi(A,D_0)+e(\bar{S_0}, A)- e(A,B) \\
& \geq \chi(A,D_0)\\
& \geq \chi(S_0, D_0) \ .
\end{aligned}
\]
By the minimality of $S_0$, equality can happen only if $A=S_0$, in which case $S =S_0 \cup B \supsetneq S_0$.
\end{itemize}

Let $S_1$ be the minimal element in $\mathbf{S}(D_1)$. It follows that either (i) $\chi(S_1,D_1) > \chi(S_0, D_0)$ or (ii) $\chi(S_1,D_1) = \chi(S_0, D_0)$ and $S_0\subsetneq S_1$. If we repeat this procedure we are therefore guaranteed to stop, at which point we will have an orientable divisor.

\end{proof}

There can be many different orientations associated to a given orientable divisor.  Also, in each equivalence class of divisors of degree $g-1$ there can be many different orientable divisors. Our next goal is to show that one can obtain a uniqueness result by fixing a vertex $q$.

\medskip

Recall from Section~\ref{sec:Introduction} that an orientation of $G$ is called \emph{$q$-connected} if there exists an oriented path from $q$ to each vertex $p$ of $G$.
Also, a divisor $D \in \Div(G)$ is called \emph{$q$-orientable} if there exists a $q$-connected orientation $\O$ on $G$ such that at every point $p \in V(G)$ we have $D(p)=\indeg_{\O} {(p)-1}$.

\begin{proposition} \label{prop:q_orient}
An orientable divisor $D \in \Div(G)$ is $q$-orientable if and only if $\chi(S,D) > 0$ for all non-empty subsets $S \subseteq V(G) \backslash \{q\}$.
\end{proposition}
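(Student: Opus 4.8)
The plan is to reduce both directions of the proposition to a single bookkeeping identity. First I would establish that for \emph{any} orientation $\O$ realizing $D$ (so that $D = D_\O$) and any subset $S \subseteq V(G)$,
\[
\chi(S,D) = \indeg_\O(S),
\]
where $\indeg_\O(S)$ denotes the number of edges of $G$ oriented from $V(G)\setminus S$ into $S$. This follows from a short computation: writing $\chi(S,D) = \deg(D|_S) + |S| - e(S)$ and substituting $D(p) = \indeg_\O(p)-1$ gives $\chi(S,D) = \sum_{p\in S}\indeg_\O(p) - e(S)$. Now the sum of indegrees over $S$ counts each edge internal to $S$ exactly once (its head lies in $S$), contributing $e(S)$, plus each edge entering $S$ from outside, contributing $\indeg_\O(S)$; the two $e(S)$ terms cancel and the identity follows. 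I would emphasize that this exhibits $\indeg_\O(S)$ as \emph{independent} of the realizing orientation, reflecting the fact that $\chi(S,D)$ depends only on $D$.

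Granting this identity, the forward direction is immediate. If $\O$ is a $q$-connected orientation with $D = D_\O$ and $S \subseteq V(G)\setminus\{q\}$ is non-empty, then $q \in V(G)\setminus S$, and any directed path from $q$ to a vertex of $S$ must traverse at least one edge entering $S$. Hence $\chi(S,D) = \indeg_\O(S) \geq 1 > 0$.

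For the converse I would show that, once the stated condition holds, \emph{every} orientation realizing the (orientable) divisor $D$ is automatically $q$-connected, so no cycle-reversal or modification argument is needed. Fix an orientation $\O$ with $D = D_\O$, and let $R$ be the set of vertices reachable from $q$ by a directed path. If $R \neq V(G)$, set $S := V(G)\setminus R$, which is non-empty and avoids $q$. By definition of reachability, no edge is oriented from $R$ into $S$, so $\indeg_\O(S)=0$, and the identity gives $\chi(S,D)=0$, contradicting the hypothesis $\chi(S,D)>0$. Therefore $R = V(G)$, so $\O$ is a $q$-connected orientation witnessing that $D$ is $q$-orientable.

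I do not anticipate a serious obstacle here: the proposition collapses entirely once the identity $\chi(S,D) = \indeg_\O(S)$ is in hand. The only point requiring genuine care is the bookkeeping in that identity — correctly separating the contributions of edges internal to $S$ from those crossing into $S$ — together with the observation that the resulting count is orientation-independent, since it is precisely this that allows the converse to conclude directly rather than by constructing a new orientation.
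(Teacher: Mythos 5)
Your proof is correct and takes essentially the same approach as the paper's: the identity $\chi(S,D)=\indeg_{\O}(S)$ that you isolate is exactly what the paper uses implicitly in both directions (its forward direction observes that $\chi(S,D)=0$ forces all edges across the cut to point out of $S$, and its converse grows a directed path backwards from $p$ using $\chi(\{p_1,\ldots,p_k\},D)>0$ to find an edge entering the set, which is your reachable-set argument in incremental form). Stating the identity explicitly as a lemma, and noting it shows every orientation realizing $D$ is automatically $q$-connected, is a clean packaging of the same argument --- the paper proves this same stronger claim --- so there is no substantive difference.
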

\begin{proof}
If $D$ is $q$-orientable, it is in particular orientable and it follows from Theorem~\ref{thm:orient} that $\chi(S,D) \geq 0$ for all $\emptyset \ne S \subseteq V(G) \backslash \{q\}$. If $\chi(S,D) = 0$ for some subset $S$ then all edges connecting $S$ to $\bar{S}$ will be directed away from $S$ in any associated orientation, and the orientation will not be $q$-connected.

Now suppose that $D$ is an orientable divisor and that $\chi(S,D) > 0$ for all $\emptyset \ne S \subseteq V(G) \backslash \{q\}$. We will show that for any orientation associated to $D$ there is a directed path from $q$ to any other vertex. Let $p=p_1$ be an arbitrary vertex in $V(G)\backslash \{q\}$. Since $\chi(\{p_1\},D) > 0$ there exists an edge oriented towards $p_1$. Let $p_2$ be the other end of this oriented edge. If $p_2 = q$ we have found a directed path from $q$ to $p$. Otherwise $\{p_1,p_2\} \subseteq V(G) \backslash \{q\}$ and we have $\chi(\{p_1,p_2\},D) > 0$.
Continuing this procedure will yield a directed path from $q$ to $p$.
\end{proof}

\medskip

Fix a vertex $q$. For a $D \in \Div(G)$, we define
\[
\mathbf{S}_q(D) = \{ \emptyset \ne  S \subseteq V(G) \backslash \{q\} : \, \chi(S,D) = \chi_D \} \ .
\]

\medskip

\begin{lemma} \label{lem:maximal}
Fix a vertex $q$ and let $E \in \Div(G)$ be an orientable divisor, but not $q$-orientable. If $S_1, S_2 \in \mathbf{S}_q(E)$, then $S_1 \cup S_2 \in \mathbf{S}_q(E)$. In particular,
$\mathbf{S}_q(E)$ has a unique maximal element (with respect to inclusion).
\end{lemma}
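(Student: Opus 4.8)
The plan is to reduce everything to the strong submodularity identity \eqref{eq:strongsubmod}, after first pinning down the value of $\chi_E$. First I would observe that since $E$ is orientable, Theorem~\ref{thm:orient} gives $\chi(S,E) \geq 0$ for every nonempty $S \subseteq V(G)$ (and $\chi(G,E)=0$), so in particular $\chi_E \geq 0$. On the other hand, since $E$ is \emph{not} $q$-orientable, Proposition~\ref{prop:q_orient} produces a nonempty set $S \subseteq V(G)\setminus\{q\}$ with $\chi(S,E) = 0$; such an $S$ is a proper nonempty subset of $V(G)$, so $\chi_E \leq 0$. Hence $\chi_E = 0$, and $\mathbf{S}_q(E)$ is precisely the (nonempty) collection of nonempty $S \subseteq V(G)\setminus\{q\}$ with $\chi(S,E)=0$.

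The heart of the argument is then a single application of \eqref{eq:strongsubmod}. Given $S_1, S_2 \in \mathbf{S}_q(E)$, equation \eqref{eq:strongsubmod} reads
\[
\chi(S_1,E) + \chi(S_2,E) = \chi(S_1 \cup S_2,E) + \chi(S_1 \cap S_2,E) + e(S_1 \setminus S_2, S_2 \setminus S_1),
\]
whose left-hand side equals $0+0=0$. I would then check that each of the three terms on the right is nonnegative: the edge-count term is trivially $\geq 0$; $\chi(S_1 \cup S_2,E) \geq 0$ because $S_1 \cup S_2$ is a nonempty subset of $V(G)$ and $E$ is orientable; and $\chi(S_1 \cap S_2,E) \geq 0$ as well, using orientability when $S_1 \cap S_2 \neq \emptyset$ and the convention $\chi(\emptyset,E)=0$ otherwise. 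A sum of three nonnegative reals equaling zero forces each to vanish; in particular $\chi(S_1 \cup S_2,E) = 0 = \chi_E$. Since $S_1 \cup S_2$ is a nonempty subset of $V(G)\setminus\{q\}$, this places $S_1 \cup S_2$ in $\mathbf{S}_q(E)$, proving closure under union.

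Finally, for the ``in particular'' clause I would note that $\mathbf{S}_q(E)$ is a finite nonempty family of subsets closed under union, so the union of all its members is again a member and is clearly its unique maximal element with respect to inclusion. I do not anticipate a genuine obstacle here: once $\chi_E = 0$ is established, the whole statement falls out of \eqref{eq:strongsubmod}, and the only point requiring care is the bookkeeping that every right-hand term is nonnegative---especially the degenerate case $S_1 \cap S_2 = \emptyset$, where one must invoke $\chi(\emptyset,E)=0$ rather than the orientability bound.
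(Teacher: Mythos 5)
Your proof is correct and takes essentially the same route as the paper's: both arguments rest on the submodularity of $\chi(\cdot,E)$ on subsets of $V(G)\setminus\{q\}$, with Theorem~\ref{thm:orient} and Proposition~\ref{prop:q_orient} together guaranteeing that $\mathbf{S}_q(E)$ is nonempty, the only difference being that the paper invokes Lemma~\ref{lem:intersection} abstractly where you unwind the same fact through the identity \eqref{eq:strongsubmod} after explicitly computing $\chi_E=0$. Your explicit determination that $\chi_E=0$ and is attained by a subset of $V(G)\setminus\{q\}$ (including the degenerate case $S_1\cap S_2=\emptyset$ via $\chi(\emptyset,E)=0$) merely spells out what the paper's one-line proof leaves implicit.
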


\begin{proof}
When restricted to subsets of $V(G)\backslash \{q\}$, $\chi(\cdot, D)$ is still a submodular function. By Lemma \ref{lem:intersection}, 
there is a unique maximal subset of $V(G)\backslash \{q\}$ for which $\chi(S,D) = \chi_D$, namely the union of all such subsets. Since $E$ is orientable but not $q$-orientable, it follows from Theorem~\ref{thm:orient} and Proposition~\ref{prop:q_orient} that the maximal set is non-empty, hence lies in $\mathbf{S}_q(E)$.
\end{proof}

\begin{theorem} \label{thm:eqiv_q_orient}
Fix a vertex $q$. Every divisor $D \in \Div^{g-1}(G)$ is equivalent to a unique $q$-orientable divisor.
\end{theorem}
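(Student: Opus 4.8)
The plan is to prove existence and uniqueness separately. For existence I would run a constructive firing procedure that exactly dualizes the proof of Theorem~\ref{thm:eqiv_orient}; for uniqueness I would invoke a discrete maximum principle built from the complementation identity implicit in Lemma~\ref{lem:EulerLem}.

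\emph{Existence.} By Theorem~\ref{thm:eqiv_orient} I may replace $D$ by a linearly equivalent orientable divisor $E_0$. If $E_0$ is not $q$-orientable, then by Proposition~\ref{prop:q_orient} there is a non-empty $S\subseteq V(G)\setminus\{q\}$ with $\chi(S,E_0)=0$; since $E_0$ is orientable we have $\chi_{E_0}=0$, so $\mathbf{S}_q(E_0)\neq\emptyset$ and Lemma~\ref{lem:maximal} supplies its unique maximal element $S_0$. I then set $E_1=E_0+\Delta(\mathbf{1}_{S_0})$, i.e.\ I fire the complement $\overline{S_0}$ so as to push chips into $S_0$ (note $q\in\overline{S_0}$). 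The analysis mirrors the four cases in the proof of Theorem~\ref{thm:eqiv_orient}: using \eqref{eq:EulerUnion} together with the maximality of $S_0$, one checks that $E_1$ is again orientable and that every non-empty $S\subseteq V(G)\setminus\{q\}$ with $\chi(S,E_1)=0$ satisfies $S\subsetneq S_0$. Consequently, if $E_1$ is still not $q$-orientable, its maximal bad set $S_1$ is a \emph{proper} subset of $S_0$. The strictly decreasing chain $S_0\supsetneq S_1\supsetneq\cdots$ of non-empty subsets must terminate, at which point the divisor is $q$-orientable.

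\emph{Uniqueness.} Suppose $D$ and $D'$ are $q$-orientable with $D'=D+\Delta(h)$ for some $h\colon V(G)\to\ZZ$, and suppose $h$ is non-constant. Let $W=\{v: h(v)=\max h\}$, a non-empty proper subset. Since each edge leaving $W$ drops the value of $h$ by at least one, a direct computation gives $\deg(\Delta(h)|_W)\geq e(W,\overline{W})$, whence $\chi(W,D')\geq\chi(W,D)+e(W,\overline{W})$. On the other hand, applying \eqref{eq:strongsubmod} with $T=\overline{W}$ and using $\chi(V(G),D')=\deg(D')+\chi(G)=0$ yields the complementation identity $\chi(W,D')+\chi(\overline{W},D')=e(W,\overline{W})$. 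I now split according to the position of $q$. If $q\in W$, then $\overline{W}$ is a non-empty subset of $V(G)\setminus\{q\}$, so $\chi(\overline{W},D')\geq 1$ by Proposition~\ref{prop:q_orient}; but $\chi(W,D)\geq 0$ forces $\chi(W,D')\geq e(W,\overline{W})$, so the identity gives $\chi(\overline{W},D')\leq 0$, a contradiction. If $q\notin W$, then $W\subseteq V(G)\setminus\{q\}$ gives $\chi(W,D)\geq 1$, so $\chi(W,D')\geq 1+e(W,\overline{W})$ and the identity gives $\chi(\overline{W},D')\leq -1$, contradicting the orientability of $D'$. Hence $h$ is constant and $D=D'$.

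I expect the uniqueness half to be the crux. The existence construction is a routine dualization of Theorem~\ref{thm:eqiv_orient}, with the monovariant now a shrinking maximal tight set rather than a growing minimal one. The uniqueness argument, by contrast, hinges on spotting the right extremal set (the super-level set $W$ of $h$) and combining the elementary firing estimate $\deg(\Delta(h)|_W)\geq e(W,\overline{W})$ with the exact complementation identity coming from \eqref{eq:strongsubmod}. The case split on whether $q$ lies in $W$ is precisely what converts the strict inequality in the definition of $q$-orientability into the needed contradiction, and arranging the inequalities to point the right way is the only delicate point.
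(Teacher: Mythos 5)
Your proof is correct and takes essentially the same route as the paper: the existence half runs the identical algorithm (repeatedly fire the complement of the unique maximal element of $\mathbf{S}_q$, with the maximal tight set strictly shrinking --- you verify the shrinkage by $\chi$-casework via \eqref{eq:EulerUnion} and maximality of $S_0$, where the paper uses a directed-path argument, but the monovariant is the same), and the uniqueness half is the paper's extremal-level-set argument in mirror image (the paper takes the minimum set of $f$ and concludes via edge orientations; you take the maximum set of $h$ and conclude numerically from the complementation identity together with Theorem~\ref{thm:orient} and Proposition~\ref{prop:q_orient}). All the inequalities in your argument check out, so this is a sound rendering of the paper's own proof.
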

\begin{proof}
\emph{Existence. }By Theorem~\ref{thm:eqiv_orient} we know that $D \sim D_1$ for some orientable divisor $D_1$.
If $D_1$ is not already $q$-orientable,
let $S_1$ be the unique maximal element of $\mathbf{S}_q(D_1)$, which exists by Lemma~\ref{lem:maximal}. In any orientation associated to $D_1$ all edges
connecting $S_1$ to its complement are directed from $S_1$ to $\bar{S_1}$. Also, it follows from the maximality of $S_1$ that there is a directed path from $q$ to any vertex $p \in \bar{S_1}$. We now replace $D_1$ with the divisor $D_2$ obtained
by firing all vertices in the subset $\bar{S_1}$. This reverses the orientation of edges connecting $\bar{S_1}$ and $S_1$, directing them toward $S_1$, and leaves all other orientations unchanged. If $D_2$ is not already $q$-orientable, there exists a maximal element $S_2$ of $\mathbf{S}_q(D_2)$. Since there is a directed path from $q$ to any vertex $p \in \bar{S_1}$, as well as at least one vertex in $S_1$, it follows that $S_2$ is a proper subset of $S_1$. We now fire the subset $\bar{S_2}$ and proceed as before. This procedure must eventually terminate
in a $q$-orientable divisor.

\medskip

\emph{Uniqueness. } Let $D_1$ and $D_2$ be distinct orientable divisors and write $D_1 =D_2 + \Delta(f)$. Consider the (non-empty) set $X \subsetneq V(G)$ where $f$ achieves its (global) minimum value.  We have
\[
0 \leq \chi(X, D_1) \leq \chi(X, D_2) - e(X , \bar{X})= -\chi(\bar{X}, D_2) \leq 0 \ .
\]

It follows that $\chi(X, D_1)=\chi(\bar{X}, D_2) = 0$. This means that in any orientation associated to $D_1$ all edges are directed away from $X$, and in any orientation associated to $D_2$ all edges are towards $X$.  Thus there cannot be a vertex $q$ for which $D_1$ and $D_2$ are both $q$-orientable.

\end{proof}

\begin{remark} \label{rmk:algorithm}
It follows from the proofs of Theorem~\ref{thm:eqiv_orient} and Theorem~\ref{thm:eqiv_q_orient} that we have the following algorithm\footnote{Our algorithm for finding the unique $q$-orientable divisor equivalent to a given divisor $D$ takes exponential time.  Since this paper was first posted on the arXiv, Spencer Backman \cite{Backman} has given a polynomial-time algorithm for this problem, as well as for finding an associated orientation.}
 for finding the unique $q$-orientable divisor equivalent to a given divisor $D$.
\begin{itemize}
\item[(1)] While there exists a subset $\emptyset \ne S \subsetneq V(G)$ with $\chi(S,D) <0$,
find the unique minimal element $A$ of $\mathbf{S}(D)$
and replace $D$ with the divisor obtained by firing all vertices of $\bar{A}$.
\item[(2)] While there exists a subset $\emptyset \ne S \subseteq V(G) \backslash \{q\}$ with $\chi(S,D) = 0$, find the unique maximal element $B$ of $\mathbf{S}_q(D)$
and replace $D$ with
the divisor obtained from firing all vertices of $\bar{B}$.
\end{itemize}
It seems difficult to deduce effective algorithms for these problems from the work of Mikhalkin and Zharkov.
\end{remark}

\subsection{Metric graphs}

Fix a metric graph $\Gamma$ and a divisor $D \in \Div(\Gamma)$. A \emph{model for $(\Gamma, D)$} is a (weighted graph) model for $\Gamma$ whose vertex set contains the support of $D$. If a point $q \in \Gamma$ is also distinguished, we further assume that the vertex set of $G$ contains $q$.
\medskip

We call a (weighted graph) model $G$ for $\Gamma$ a \emph{semi-model for $(\Gamma, D)$} if  $\Gamma \backslash V(G)$ consists of a finite union of open intervals $\cup_{i=1}^r e_i^\circ$, and for each $i$ the set $e_i^\circ \cap \supp(D)$ is either empty or consists of a single point $p$ with $D(p)=1$. In other words, a semi-model is allowed to ``miss'' some points $p \in \supp(D)$ where $D(p)=1$ and $p$ is the only point of $\supp(D)$ lying in the corresponding open edge. Again, if a point $q \in \Gamma$ is also distinguished, we further assume that $q \in V(G)$.

\medskip

It turns out that semi-models are more convenient to work with than models when we want to show that certain algorithms for metric graphs terminate. We will obtain results similar to those in Section~\ref{sec:orientations.finite.graphs} by reducing the metric graph case to the case of finite graphs via semi-models.

\medskip

Let $G=(V, E)$ be an arbitrary semi-model for $(\Gamma, D)$, and let $G_D$ be the finite graph obtained from $G$ by removing all open edges which contain a point of the support of $D$. Note that $V(G_D) = V(G)$ and $E(G_D) \subset E(G)$. The following lemma will help us compare the set of orientable divisors on $\Gamma$ and on $G_D$.

\medskip

\begin{lemma}\label{lem:semi-model vs. metric graph}
Let $D\in \Div^{g-1}(\Gamma)$, and let $G$ be a semi-model for $(\Gamma, D)$. Let $D_G$ be the restriction of $D$ to $G$, i.e., if $D=\sum_{p\in \Gamma}a_p(p)$, then $D_G=\sum_{p\in V(G)} a_p(p)$. 
Then, for any $q \in \Gamma$, the divisor $D$ is ($q$-)orientable on $\Gamma$ if and only if $D_G$ is ($q$-)orientable on $G_D$.
\end{lemma}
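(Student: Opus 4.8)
The plan is to reduce the whole statement to a direct, indegree-preserving correspondence between orientations of $\Gamma$ that realize $D$ and orientations of the finite graph $G_D$ that realize $D_G$. First I would pass to the refinement $G'$ of $G$ obtained by adjoining, as new vertices, the finitely many points $p_1,\dots,p_k\in\supp(D)$ lying in the interiors of edges $e_1,\dots,e_k$ of $G$. Then $G'$ is an honest model for $(\Gamma,D)$, each $e_i$ is split into two half-edges $e_i',e_i''$ meeting at $p_i$, and $G_D$ is exactly $G'$ with the vertices $p_i$ and their incident half-edges $e_i',e_i''$ deleted. Since $\indeg$ at a point is computed in any model having that point as a vertex and is invariant under refinement, an orientation $\O$ of $\Gamma$ with $D=D_\O$ is the same data as an orientation $O'$ of $G'$ satisfying (a) $\indeg_{O'}(p_i)=2$ for every $i$ and (b) $\indeg_{O'}(v)=D(v)+1$ for every $v\in V(G)$: condition (a) records $D(p_i)=1$, while at every remaining valence-$2$ point the requirement $D=0$ forces $\indeg=1$ automatically.

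The orientable statement then follows from the observation that condition (a) says precisely that both half-edges $e_i',e_i''$ point \emph{toward} $p_i$, i.e.\ \emph{away} from their endpoints in $V(G)$. Consequently, deleting these half-edges to pass from $G'$ to $G_D$ changes no indegree at any vertex of $V(G)$. So restricting $O'$ to $G_D$ yields an orientation $O$ with $\indeg_O(v)=D(v)+1=D_G(v)+1$, exhibiting $D_G=D_\O$ on $G_D$; conversely, any orientation of $G_D$ realizing $D_G$ extends to one of $G'$ realizing $D$ by orienting both half-edges into each $p_i$. These two assignments are mutually inverse, proving that $D$ is orientable on $\Gamma$ if and only if $D_G$ is orientable on $G_D$. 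Note that no appeal to Theorem~\ref{thm:orient} or to connectedness of $G_D$ is needed for this half.

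For the $q$-orientable version I would use the assumption built into the definition of a semi-model that $q\in V(G)$ (so $q\neq p_i$), and show the above correspondence preserves $q$-connectedness. The key point is that each $p_i$ has valence $2$ and indegree $2$ in $O'$, hence is a \emph{sink}: no directed path can leave it. Therefore a directed path from $q$ to a vertex $v\neq p_i$ cannot pass through any $p_i$, so it never traverses a half-edge and in fact lies entirely within $G_D$; this shows that a $q$-connected $O'$ restricts to a $q$-connected $O$. Conversely, if $O$ is $q$-connected on $G_D$, then every point of a deleted edge $e_i$ is reached from one of its endpoints, which is itself reachable from $q$ inside $G_D$, so the extension $O'$ is $q$-connected on $\Gamma$. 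Combining this with the orientable case gives the full equivalence.

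I expect the main obstacle to be the bookkeeping in the $q$-connectedness transfer: precisely, verifying via the sink property that directed paths in $\Gamma$ from $q$ genuinely avoid the deleted edges, and, in the reverse direction, that reachability of the interior points $p_i$ and of the open half-edges follows from reachability of the vertices of $G_D$. Everything else is essentially formal once the half-edges at each $p_i$ are seen to be forced outward.
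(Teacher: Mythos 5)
Your proof is correct and takes essentially the same approach as the paper's: both pass to the refinement of the semi-model with the interior support points $p_i$ adjoined as vertices (the paper does this in a footnote), observe that $D(p_i)=1$ forces both half-edges to point into $p_i$, and then delete or reinsert those edges, directing them toward the support points in the converse direction. Your explicit sink argument for transferring $q$-connectedness simply spells out what the paper asserts in a single line (``two arrows pointing toward a single point''), so the two proofs coincide in substance.
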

\begin{proof}
  Suppose $D$ is given by the $q$-connected orientation $\O$ on $\Gamma$. Then $\O$ naturally induces a $q$-connected orientation\footnote{Strictly speaking, the induced orientation is not an orientation for the semi-model $G$ but for its refinement whose vertex set consists of $V(G)$ and the support of $D$.} on the semi-model $G$. On each edge $e\in E(G)\backslash E(G_D)$, the orientation $\O$ on $\Gamma$ must look like two arrows pointing toward a single point in the support of $D$. Hence, after removing these edges, $\O$ is still a $q$-connected orientation on $G_D$ and the resulting divisor is $D_G$.

  Conversely, given a $q$-connected orientation on $G_D$ for $D_G$, we obtain an orientation for $D$ on $\Gamma$ by directing every edge $e\in E(G)\backslash E(G_D)$ toward the corresponding point in the support of $D$.
\end{proof}

By Lemma \ref{lem:semi-model vs. metric graph}, in order to show that $D\in \Div(\Gamma)$ is $q$-orientable, it suffices to show that there is a semi-model $G$ for $D$ such that $D_G$ is $q$-orientable on $G_D$. This helps us reduce our problems to the case of finite graphs. 

\begin{remark}
In the following, since we are working with different finite graphs, we use the notation $\chi_G( S , D)$ ,  $\mathbf{S}(G ,D)$, and $\mathbf{S}_q(G ,D)$ (instead of $\chi( S , D)$ ,  $\mathbf{S}(D)$, and $\mathbf{S}_q(D)$) to identify the underlying graph we are working with at each step.
\end{remark}

\begin{theorem}\label{thm:metric equiv}
Every divisor $D\in \Div^{g-1}(\Gamma)$ is equivalent to an orientable divisor on $\Gamma$. More precisely, let $G$ be a model for $(\Gamma, D)$. Then $D$ is equivalent to a divisor $D'$ on $\Gamma$ such that $G$ is a semi-model for $D'$ and $D'_G$ is orientable on $G_{D'}$.
\end{theorem}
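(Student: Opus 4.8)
The plan is to reduce the statement to the finite-graph result (Theorem~\ref{thm:eqiv_orient}) by mimicking its algorithmic proof in the metric setting, using the semi-model dictionary of Lemma~\ref{lem:semi-model vs. metric graph} to pass back and forth between $\Gamma$ and finite graphs. Throughout I will keep the model $G$ fixed, moving chips only, and maintain as an invariant that $G$ is a semi-model for the current divisor, i.e.\ that each open edge of $G$ carries at most one support point, with coefficient $1$. Starting from $D^{(0)}=D$, for which $G$ is genuinely a model (so $G_{D^{(0)}}=G$), I will produce a sequence $D=D^{(0)}\sim D^{(1)}\sim\cdots$ of linearly equivalent divisors terminating in the desired $D'$.

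At the $k$-th stage I examine the finite graph $H_k:=G_{D^{(k)}}$ together with the restricted divisor $d_k:=D^{(k)}_G$, and test whether $d_k$ is orientable on $H_k$ via the criterion of Theorem~\ref{thm:orient}. If it is, I stop and set $D'=D^{(k)}$; by Lemma~\ref{lem:semi-model vs. metric graph} this $D'$ is then orientable on $\Gamma$, which is exactly what is wanted. If $d_k$ is not orientable, then $\chi_{H_k}(S,d_k)<0$ for some $S$, so by Corollary~\ref{cor:minimal1} there is a unique minimal element $A_k$ of $\mathbf{S}(H_k,d_k)$. I then \emph{fire} the complement $\overline{A_k}$ in the metric graph $\Gamma$, firing by the continuous amount $t_k>0$ equal to the smallest remaining distance, over boundary edges of $A_k$ carrying a chip, from that chip to its endpoint in $A_k$. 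This is exactly large enough to push at least one chip all the way onto a vertex of $A_k$, while moving the remaining boundary chips only into edge interiors; one checks that the invariant is preserved and that $D^{(k+1)}:=D^{(k)}+\Delta(f)$ is linearly equivalent to $D^{(k)}$, where $f$ is the corresponding capped distance function.

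The reason this step makes progress is the same quantitative submodularity computation used in the proof of Theorem~\ref{thm:eqiv_orient}: firing $\overline{A_k}$ raises $\chi_{H_{k+1}}(A_k,d_{k+1})$ by the number of chips that actually reach $A_k$ (at least one), while for every other $S$ the estimates in cases (1)--(4) of that proof show $\chi_{H_{k+1}}(S,d_{k+1})\geq \chi_{H_k}(A_k,d_k)$, with equality only when $S\supsetneq A_k$. Here one must also account for the edges that leave or enter $H_k$ when a chip lands on or departs from a vertex; since such an edge always joins $A_k$ to $\overline{A_k}$, it is never counted in $e(A_k)$ and the relevant inequalities are unaffected. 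Consequently the integer $\chi_{d_k}:=\min_S \chi_{H_k}(S,d_k)$ is nondecreasing and bounded above by $0$, and between its increments the unique minimal bad set strictly grows; a lexicographic potential (the value $\chi_{d_k}$, then the size of the minimal bad set, then the number of chips sitting in open edges) should then force termination after finitely many steps.

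The step I expect to be the main obstacle is precisely this termination and bookkeeping argument. In the finite-graph proof the combinatorial type is fixed and a single monotone quantity suffices, whereas here the auxiliary finite graph $H_k=G_{D^{(k)}}$ changes from step to step as chips migrate between vertices and open edges, so I must verify that the monotonicity of the finite-graph Claim survives these changes and that no infinite loop of chips shuttling within edges can occur. Making the choice of the firing amount $t_k$ precise enough to simultaneously (i) preserve the semi-model invariant, (ii) guarantee a strict gain in the chosen potential, and (iii) keep the computation reducible to $H_k$ via Lemma~\ref{lem:semi-model vs. metric graph}, is the delicate part; the remaining verifications are routine adaptations of the finite-graph estimates.
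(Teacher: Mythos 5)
Your overall architecture coincides with the paper's: reduce to the finite graph $G_{D^k}$ via the semi-model dictionary (Lemma~\ref{lem:semi-model vs. metric graph}), fire the complement of the unique minimal element $A_k$ of $\mathbf{S}(G_{D^k},D^k)$ (Corollary~\ref{cor:minimal1}, Theorem~\ref{thm:orient}), and terminate via the dichotomy from the Claim in the proof of Theorem~\ref{thm:eqiv_orient}. Termination, which you flag as the main worry, is actually the easy part: $V(G_{D^k})=V(G)$ is \emph{fixed} for all $k$ (only edges enter and leave the auxiliary graph), $\chi$ is integer-valued and nondecreasing along the run, and between increments the minimal bad set strictly grows inside the fixed finite set $V(G)$ --- no third ``chips in open edges'' coordinate is needed. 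The genuine gap is in your definition of the firing move itself. First, your $t_k$ is a minimum over ``boundary edges of $A_k$ carrying a chip,'' but this set is typically empty: at the very first step $G$ is a \emph{model} for $(\Gamma,D)$, so every chip sits on a vertex, no cut segment carries an interior chip, and $t_0$ is undefined --- your algorithm cannot start. Relatedly, ``moving chips only'' along chip-bearing segments is not a linear equivalence: any $f\in R(\Gamma)$ realizing the firing of the complement of $\Gamma[A_k]$ transports one unit along \emph{every} cut direction, in particular decreasing the coefficient at the far endpoint of each chip-free cut edge (possibly making a vertex coefficient negative, which is fine since divisors here need not be effective, and is exactly what firing $\overline{S_0}$ does in the finite-graph proof; the case analysis in the Claim uses the full cut $e(S_0,\overline{S_0})$, not just its chip-bearing part).

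Second, even where $t_k$ is defined, your cap is too weak: it must also bound the move by the distances to the vertices of $G$ (and all other support points) outside $\Gamma[A_k]$. The paper takes $\ell$ to be the minimal distance from $\Gamma[S_k]$ to $T_k := \left(V(G)\cup \supp(D^k)\right)\setminus\Gamma[S_k]$ and moves one unit a distance $\ell$ along every cut interval. If, as in your proposal, the amount can exceed the length of a chip-free cut edge, the firing wave passes through a boundary vertex into $\Gamma[A_k]$, where waves entering through different cut edges can collide in the interior of an edge; this can deposit coefficient $\geq 2$ at an interior point, or place two support points on one open edge, destroying the semi-model invariant and with it the reduction to $G_{D^k}$ via Lemma~\ref{lem:semi-model vs. metric graph}. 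With the paper's $\ell$, each cut interval delivers its unit either into an empty open-edge interior (coefficient $1$, invariant preserved) or exactly onto $\partial\Gamma[S_k]$; and since the minimum defining $\ell$ is attained, at least one unit lands on a vertex of $S_k$, which is precisely what makes the finite-graph Claim --- and hence the dichotomy (i)/(ii) and termination --- go through verbatim in the metric setting.
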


\begin{proof}
 Fix a model $G$ for $(\Gamma, D)$. Let $D^0=D$, let $k \geq 0$, and assume that $G$ is a semi-model for $D^k$.  
  If $D^k$ is not orientable, we inductively define a divisor $D^{k+1} \in \Div^{g-1}(\Gamma)$ equivalent to $D$ as follows. 
  By Lemma~\ref{lem:semi-model vs. metric graph}, Corollary \ref{cor:minimal1}, and Theorem \ref{thm:orient}, there is a unique
  minimal element $S_k \in \mathbf{S}(G_{D^k} ,D^k)$. Define $\ell$ to be the minimal distance between $\Gamma[S_k]$ (the closed subset of $\Gamma$ corresponding to the induced subgraph $G[S_k]$) and $T_k := \left( V(G)\cup \supp(D^k) \right) \backslash \Gamma[S_k]$. Let $C$ be the cut consisting of all the closed intervals connecting $S_k$ to points of $T_k$, and fire by moving each chip on an interval in $C$ a distance $\ell$ toward $S_k$. We call the resulting divisor $D^{k+1}$.

Clearly, $G$ is again a semi-model for $D^{k+1}$. The claim in the proof of Theorem \ref{thm:eqiv_orient} also holds here (with a similar proof), i.e., for each $k\geq 0$, either (i) $\chi_{G_{D^{k+1}}}(S_{k+1},D^{k+1}) > \chi_{G_{D^k}}(S_k, D^k)$ or (ii) $\chi_{G_{D^{k+1}}}(S_{k+1},D^{k+1}) = \chi_{G_{D^k}}(S_k, D^k)$ and $S_k\subsetneq S_{k+1}$. Therefore this procedure is guaranteed to stop, at which point we will have an orientable divisor.
\end{proof}

\medskip

\begin{theorem} \label{thm:qconn_unique}
Every divisor $D \in \Div^{g-1}(\Gamma)$ is equivalent to a unique $q$-orientable divisor.
\end{theorem}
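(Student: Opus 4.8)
The plan is to prove existence and uniqueness separately, in both cases reducing the metric statement to the finite-graph results of Section~\ref{sec:orientations.finite.graphs} through semi-models and Lemma~\ref{lem:semi-model vs. metric graph}. Throughout I will exploit the fact that a tropical rational function has integer slopes, which is what allows length-sensitive information on $\Gamma$ to be read off from the purely combinatorial function $\chi(\cdot,\cdot)$.

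\textbf{Existence.} By Theorem~\ref{thm:metric equiv} I may replace $D$ by a linearly equivalent orientable divisor and fix a model $G$ for $(\Gamma,D)$ (refined so that $q\in V(G)$) which is a semi-model for $D$ with $D_G$ orientable on $G_D$. If $D$ is already $q$-orientable we are done; otherwise, by Lemma~\ref{lem:semi-model vs. metric graph} and Proposition~\ref{prop:q_orient} the collection $\mathbf{S}_q(G_D,D)$ is non-empty and, by Lemma~\ref{lem:maximal}, has a unique maximal element $S$. I then fire all of $\bar S$ by moving each chip on a cut interval a distance $\ell$ toward $\Gamma[S]$, where $\ell$ is the minimal distance from $\Gamma[S]$ to $\bigl(V(G)\cup\supp(D)\bigr)\setminus\Gamma[S]$, exactly as in the proof of Theorem~\ref{thm:metric equiv}. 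This keeps $G$ a semi-model and, mirroring the finite-graph argument of Theorem~\ref{thm:eqiv_q_orient}, directs the cut edges toward $S$ so that the unique maximal element of $\mathbf{S}_q$ strictly shrinks (or the minimum value $\chi_D$ strictly increases) at each step. Since both quantities live on the fixed finite vertex set $V(G)$, the procedure terminates in a $q$-orientable divisor.

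\textbf{Uniqueness.} Suppose $D_1$ and $D_2$ are linearly equivalent $q$-orientable divisors and write $D_1=D_2+\Delta(f)$ with $f\in R(\Gamma)$. I choose a common refinement $G$ that is a model for both $(\Gamma,D_1)$ and $(\Gamma,D_2)$, contains $q$ as a vertex, and is fine enough that $f$ is affine on each edge. Then $D_1,D_2$ are supported on $V(G)$, so $G_{D_1}=G_{D_2}=G$, and by Lemma~\ref{lem:semi-model vs. metric graph} both are $q$-orientable on the finite graph $G$. Let $S\subseteq V(G)$ be the set of vertices at which $f$ attains its global minimum; it is non-empty (the minimum is attained, and the minimal set is a subgraph) and proper (as $f$ is non-constant). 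For each $p\in S$ every edge leaving $S$ carries a \emph{positive integer} outgoing slope of $f$, while edges inside $S$ carry slope $0$; summing $\Delta(f)$ over $S$ therefore gives $\deg(D_1|_S)\le \deg(D_2|_S)-e(S,\bar S)$, whence $\chi(S,D_1)\le\chi(S,D_2)-e(S,\bar S)=-\chi(\bar S,D_2)$ by \eqref{eq:EulerUnion} and the orientability of $D_2$. Since $D_1,D_2$ are orientable, Theorem~\ref{thm:orient} forces $\chi(S,D_1)\ge 0$ and $\chi(\bar S,D_2)\ge 0$, so this chain collapses to $\chi(S,D_1)=\chi(\bar S,D_2)=0$. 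But $q$-orientability together with Proposition~\ref{prop:q_orient} forces $q\in S$ (else $\chi(S,D_1)>0$) and simultaneously $q\in\bar S$ (else $\chi(\bar S,D_2)>0$), which is absurd. Hence $f$ is constant and $D_1=D_2$.

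The main obstacle is the interface between linear equivalence on $\Gamma$, whose Laplacian depends on edge lengths, and the length-free combinatorics of Section~\ref{sec:orientations.finite.graphs}. For uniqueness this is neatly bypassed by the integer-slope property: it is precisely what bounds the summed Laplacian across the cut by $-e(S,\bar S)$ regardless of the actual lengths, turning a metric identity into the combinatorial inequality above. For existence the analogous difficulty is that a single metric firing move advances chips by only a partial edge length and may create new support points, so termination is not immediate; this is exactly what the semi-model bookkeeping of Theorem~\ref{thm:metric equiv} is designed to absorb, allowing the finite-graph monotonicity argument of Theorem~\ref{thm:eqiv_q_orient} to run on a fixed vertex set.
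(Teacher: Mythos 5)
Your proposal is correct and takes essentially the same route as the paper: existence by first passing to an orientable divisor via Theorem~\ref{thm:metric equiv} and then iterating the metric firing of the complement of the unique maximal element of $\mathbf{S}_q$, with termination coming from the strict shrinking of that maximal set inside the fixed finite vertex set (the paper makes this concrete by noting a chip must land on a vertex of $S_k$, creating a directed path from $q$ into $S_k$), and uniqueness by the same sandwich $0 \le \chi(S,D_1) \le \chi(S,D_2) - e(S,\bar{S}) = -\chi(\bar{S},D_2) \le 0$ on the minimum set of $f$, concluding via Proposition~\ref{prop:q_orient} that $q$ would have to lie in both $S$ and $\bar{S}$. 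Your explicit appeal to integer slopes merely spells out the paper's parenthetical observation that $f$ is automatically linear on each edge of a model containing $\supp(D_1)\cup\supp(D_2)$, since $\Delta(f)=D_1-D_2$ is supported on $V(G)$.
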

\begin{proof}
 
\emph{Existence.} By Theorem \ref{thm:metric equiv}, we may assume that $D=D^0$ is orientable. Fix a model $G$ for $(\Gamma, D)$ and apply the following algorithm:

For $k\geq 0$, assume $G$ is a semi-model for $D^k$.  If $D^k$ is not already $q$-orientable, then by Proposition \ref{prop:q_orient} and Lemma \ref{lem:maximal} there is a unique maximal element $S_k \in \mathbf{S}_q(G_{D^k} ,D^k)$. Define $\ell$ and $C$ exactly as in the proof of Theorem \ref{thm:metric equiv}, and fire by moving each chip on an interval in $C$ a distance $\ell$ toward $S_k$.   Clearly, $G$ is also a semi-model for the resulting divisor $D^{k+1}$.

In obtaining $D^{k+1}$ from $D^k$, at least one chip must arrive at some vertex $v \in S_k$, so there is a directed path from $q$ to $v$ in the 
corresponding orientation $\O_{k+1}$. Thus $S_{k+1}$ is a proper subset of $S_k$, and the algorithm will terminate to give a $q$-orientable divisor.

\medskip

\emph{Uniqueness.} This is identical to the proof of uniqueness in Theorem~\ref{thm:eqiv_q_orient}, starting with distinct orientable divisors 
$D_1$ and $D_2$ and letting $G$ be a model for $\Gamma$ such that $V(G)$ contains $\supp(D_1)\cup \supp(D_2)$. 
(Note that if $D_1 =D_2 + \Delta(f)$ then $f$ is linear on every edge of $G$ because $\Delta(f)=D_1-D_2$ is supported on $V(G)$.)
\end{proof}


\subsection{Break divisors and universally reduced divisors}

Break divisors, like $q$-reduced divisors, provide a way for us to pick out a distinguished representative from each linear equivalence class of divisors.
In this section we link the two notions by characterizing break divisors as limits of degree $g$ effective divisors which are $q$-reduced for all $q \in \Gamma$.

\medskip

\begin{lemma} \label{lem:rigid}
An effective divisor $D$ on $\Gamma$ is $q$-reduced for every $q \in \Gamma$ if and only if $|D|=\{D\}$.
\end{lemma}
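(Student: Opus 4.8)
The plan is to prove the equivalence by two implications, using the uniqueness theorem for reduced divisors (Theorem~\ref{UniquenessThm}) as the key engine.

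The easy direction is $(\Leftarrow)$: suppose $|D| = \{D\}$. Fix any $q \in \Gamma$. By Theorem~\ref{UniquenessThm} there is a unique $q$-reduced divisor $D_q$ in the linear equivalence class of $D$. Since a $q$-reduced divisor is in particular effective outside $q$ — and for an effective divisor $D$ the class $|D|$ contains only effective representatives — I would argue that $D_q$ must be effective and hence $D_q \in |D| = \{D\}$, forcing $D_q = D$. The one subtlety here is that $q$-reducedness only guarantees effectivity \emph{outside} $q$, so I must rule out the possibility that $D_q$ has a negative coefficient at $q$. This follows because $\deg D_q = \deg D = g \geq 0$ together with the reducedness condition, or more directly because any effective divisor is automatically its own $q$-reduced representative when $|D| = \{D\}$. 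Thus $D$ is $q$-reduced for every $q$.

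The substantive direction is $(\Rightarrow)$: suppose $D$ is $q$-reduced for every $q \in \Gamma$, and I want to conclude $|D| = \{D\}$. Take any $E \in |D|$, so $E$ is effective and $E = D + \Delta(f)$ for some $f \in R(\Gamma)$; the goal is $E = D$. The idea is to apply the Principle of Least Action (Lemma~\ref{LeastActLemma}) twice with a cleverly chosen pair of base points. Let $q$ be a point where $f$ attains its global minimum and let $q'$ be a point where $f$ attains its global maximum. Since $E$ is effective, it is in particular effective outside $q$, so $E \in |D|_q$; because $D$ is $q$-reduced, Lemma~\ref{LeastActLemma} tells me $f$ has a global minimum at $q$ — consistent with the choice — but more usefully I would run the argument symmetrically: $E \in |D|_{q'}$ as well (effectivity outside any point), and $D$ being $q'$-reduced forces $f$ to attain its global minimum at $q'$. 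Combined with $q'$ being a global \emph{maximum}, this shows $f$ is constant, whence $\Delta(f) = 0$ and $E = D$.

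\textbf{The main obstacle} I anticipate is making the two-point argument airtight: I need to ensure that the global max and global min of $f$ can both serve as valid base points and that the least-action principle genuinely pins down $f$ as constant rather than merely constraining its extrema. The cleanest route is to invoke Lemma~\ref{LeastActLemma} with $q'$ equal to a point where $f$ is \emph{maximized}: since $D$ is $q'$-reduced and $E = D + \Delta(f) \in |D|_{q'}$, the lemma asserts $f$ has a global minimum at $q'$, so $f(q') \le f(p)$ for all $p$; but $q'$ was chosen as a maximum, so $f \equiv f(q')$ is constant. I would present exactly this one application (a single base point suffices), which makes the argument short and robust, and I would double-check that $E$ effective indeed implies $E \in |D|_{q'}$, which is immediate from the definition of effectivity outside $q'$.
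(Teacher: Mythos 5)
Your overall architecture coincides with the paper's: the forward direction is the least-action argument (the paper applies Lemma~\ref{LeastActLemma} at two arbitrary points $p,q$ to get $f(p)\le f(q)\le f(p)$, exactly as in Lemma~\ref{lem:inj}; your single application at a global maximizer of $f$ is an equivalent, slightly slicker packaging and is airtight), and the backward direction goes through Theorem~\ref{UniquenessThm} just as the paper does. So the route is right, and the forward implication needs no repair.

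The gap is in the backward direction, at the step you yourself flag: showing that the $q$-reduced representative $D_q$ of the effective divisor $D$ satisfies $D_q(q)\ge 0$, hence $D_q\in|D|=\{D\}$. Neither of your two offered reasons works. The degree argument fails: the lemma makes no degree assumption (your ``$\deg D = g$'' is not a hypothesis of the statement), and in any case nonnegative degree together with (R1) and (R2) does not force $D_q(q)\ge 0$ --- the $q$-reduced representative of any nontrivial degree-$0$ class is effective outside $q$ yet has $D_q(q)<0$. Your ``more direct'' reason --- that an effective divisor is automatically its own $q$-reduced representative once $|D|=\{D\}$ --- is circular, since $D_q=D$ is precisely what this direction is trying to prove. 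What you need is the standard fact, which the paper simply asserts: if the class of a $q$-reduced divisor contains an effective divisor, then that $q$-reduced divisor is effective. It follows from one more application of the least action principle: write $D = D_q + \Delta(f)$; since $D$ is effective, $D\in|D_q|_q$, so by Lemma~\ref{LeastActLemma} $f$ attains its global minimum at $q$; then every outgoing slope of $f$ at $q$ is nonnegative, so $\Delta(f)(q)\le 0$ and $D_q(q) = D(q) - \Delta(f)(q) \ge D(q) \ge 0$. With that inserted (or with a citation for this standard fact), your proof is complete and matches the paper's.
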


\begin{proof}
If $D$ is  $q$-reduced for every $q \in \Gamma$, it follows from the least action principle that $|D|=\{D\}$, exactly as in the proof of Lemma~\ref{lem:inj}.
Conversely, if $D$ is effective then for every $q$ the unique $q$-reduced divisor $D_q$ equivalent to $D$ is also effective, so if $|D|=\{ D \}$ we must have $D_q=D$.
\end{proof}

We define a divisor on $\Gamma$ to be \emph{universally reduced} if it has degree $g$, is effective, and is $q$-reduced for every $q \in \Gamma$.

\begin{theorem}
The set $\Sigma$ of break divisors on $\Gamma$ is equal to the closure in $\Div^g_+(\Gamma)$ of the set of universally reduced divisors.
\end{theorem}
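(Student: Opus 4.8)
The plan is to trap the set $U$ of universally reduced divisors between the open stratum $\Sigma^{o}$ and its closure $\Sigma$, and then pass to closures. The two containments $\Sigma^{o} \subseteq U$ and $U \subseteq \Sigma$ are each short, given the machinery already in place. First I would prove $\Sigma^{o} \subseteq U$: for any $D \in \Sigma^{o}$, Lemma~\ref{lem:inj} gives $|D| = \{D\}$, and then Lemma~\ref{lem:rigid} says exactly that such a $D$ (being effective of degree $g$) is $q$-reduced for every $q$, i.e.\ universally reduced.

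Next I would establish $U \subseteq \Sigma$. Let $D$ be universally reduced. By Lemma~\ref{lem:rigid} we again have $|D| = \{D\}$. On the other hand, Theorem~\ref{thm:mainthm1} provides a break divisor $D'$ with $D' \sim D$; since break divisors are effective (they are of the form $(p_1)+\cdots+(p_g)$), we have $D' \in |D| = \{D\}$, forcing $D = D'$. Hence $D$ is itself a break divisor, so $D \in \Sigma$.

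Combining the two inclusions yields $\Sigma^{o} \subseteq U \subseteq \Sigma$, hence $\overline{\Sigma^{o}} \subseteq \overline{U} \subseteq \overline{\Sigma}$. Since $\Sigma$ is compact and $\Div^g_+(\Gamma) = \Gamma^{(g)}$ is Hausdorff, $\Sigma$ is closed, so $\overline{\Sigma} = \Sigma$. The step I expect to require the most care is the identity $\overline{\Sigma^{o}} = \Sigma$. For each spanning tree $T$ the closed cell $\Sigma_T$ is the image of $\prod_{e \notin E(T)} \bar{e}$ under the continuous symmetrization map $\prod \bar{e} \to \Div^g_+(\Gamma)$, and $\prod_{e \notin E(T)} e^{o}$ is dense in $\prod_{e \notin E(T)} \bar{e}$; by continuity the image $\Sigma^{o}_T$ is dense in $\Sigma_T$, so $\overline{\Sigma^{o}_T} = \Sigma_T$. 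Because the union over spanning trees is finite, closure commutes with it, giving $\overline{\Sigma^{o}} = \bigcup_T \overline{\Sigma^{o}_T} = \bigcup_T \Sigma_T = \Sigma$. Putting everything together, $\Sigma = \overline{\Sigma^{o}} \subseteq \overline{U} \subseteq \Sigma$, and therefore $\overline{U} = \Sigma$, as desired.
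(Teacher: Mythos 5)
Your proposal is correct and follows essentially the same route as the paper: the paper likewise shows $\Omega \subseteq \Sigma$ via Lemma~\ref{lem:rigid} and Theorem~\ref{thm:mainthm1}, and gets $\Sigma \subseteq \overline{\Omega}$ from the fact that $\Sigma^{o}$ is a dense subset of $\Sigma$ consisting of universally reduced divisors (Lemma~\ref{lem:inj}). You merely make explicit the density argument $\overline{\Sigma^{o}_T} = \Sigma_T$ that the paper leaves implicit.
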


\begin{proof}
Let $\Omega$ be the set of universally reduced divisors.
By Lemma~\ref{lem:inj}, $\Sigma$ contains a dense subset $\Sigma^\circ$ belonging to $\Omega$.
Since $\Sigma$ is compact, we have $\Sigma \subseteq \bar\Omega$.
To prove the reverse inclusion, note that if $D \in \Omega$ then by Lemma~\ref{lem:rigid} we have $|D|=\{D \}$.
Since $D$ is equivalent to a break divisor by Theorem~\ref{thm:mainthm1}, $D$ must itself be a break divisor.
Thus $\Omega \subseteq \Sigma$, and by taking closures we obtain $\bar\Omega \subseteq \Sigma$ as desired.
\end{proof}

\subsection{Integral break divisors on finite graphs}
\label{sec:integralbreakdivisors}

Suppose $G$ is a finite (unweighted) graph and that $\Gamma$ is the associated metric graph in which all edges of $G$ are assigned length $1$.
We let $\Sigma(G) := \Sigma \cap \Div^g_+(G)$ denote the set of \emph{integral break divisors}, i.e., those break divisors which are supported on vertices of $G$.
Inside $\Pic^d(\Gamma)$, for each integer $d$, we have the finite subset $\Pic^d(G)$ consisting of linear equivalence classes of divisors of degree $d$ supported on the vertices of $G$.
For $d=0$ the set $\Pic^0(G)$ is a group whose cardinality equal to the number of spanning trees in $G$, and each $\Pic^d(G)$ is a torsor for $\Pic^0(G)$.

\begin{theorem}
\label{thm:integralbreakdivisorthm}
The canonical map $\pi : \Div^g_+(\Gamma) \to \Pic^g(\Gamma)$ induces a bijection from $\Sigma(G)$ to $\Pic^g(G)$.
In particular, the number of integral break divisors is equal to the number of spanning trees of $G$.
\end{theorem}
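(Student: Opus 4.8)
The plan is to prove the bijectivity of the induced map $\pi : \Sigma(G) \to \Pic^g(G)$ by establishing injectivity and surjectivity separately, leveraging the integral version of our constructive procedure. For \textbf{injectivity}, suppose $D_1, D_2 \in \Sigma(G)$ are integral break divisors with $\pi(D_1) = \pi(D_2)$, i.e.\ $D_1 \sim D_2$. By Theorem~\ref{thm:mainthm1} there is a \emph{unique} break divisor in each degree $g$ linear equivalence class, and since both $D_1$ and $D_2$ are break divisors lying in the same class, we conclude $D_1 = D_2$. This immediately gives injectivity, and moreover shows that $\pi$ restricted to $\Sigma(G)$ is injective with image contained in $\Pic^g(G)$ (since each $D_i$ is supported on vertices of $G$).

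For \textbf{surjectivity}, I would fix a class $[D] \in \Pic^g(G)$ represented by a divisor $D$ of degree $g$ supported on $V(G)$, and show that the unique break divisor equivalent to $D$ is in fact \emph{integral}, i.e.\ supported on $V(G)$. The natural route is through Lemma~\ref{lem:break_vs_qorient}: fix a vertex $q \in V(G)$, so that $D - (q)$ has degree $g-1$ and is supported on $V(G)$. By Theorem~\ref{thm:qconn_unique} (equivalently, its finite-graph counterpart Theorem~\ref{thm:eqiv_q_orient}), the divisor $D - (q)$ is linearly equivalent to a unique $q$-orientable divisor $D_{\O}$. The crucial point is that the constructive algorithm from Remark~\ref{rmk:algorithm}, when applied to a divisor supported on $V(G)$ with \emph{all edge lengths equal to $1$}, fires entire vertex sets by integer distances and therefore produces a $q$-orientable divisor still supported on the vertices. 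Such a $D_{\O}$ has the form $\sum_{p \in V(G)}(\indeg_{\O}(p) - 1)(p)$, which is integral by construction. Then $D_{\O} + (q)$ is an integral break divisor by Lemma~\ref{lem:break_vs_qorient}, and it is equivalent to $D$, establishing surjectivity.

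The \textbf{main obstacle} is verifying that the firing operations in the algorithm preserve integrality, i.e.\ that the distinguished representative found by the procedure stays supported on $V(G)$ rather than drifting to interior points of edges. Here the unit-length hypothesis is essential: when we fire a vertex subset $\bar{S}$ (equivalently $\bar{A}$ or $\bar{B}$ in Remark~\ref{rmk:algorithm}), each chip moves along an edge by the full edge length $\ell = 1$, landing exactly on an adjacent vertex. This is precisely the discrete chip-firing move on the combinatorial graph $G$, so the entire computation takes place within $\Div^g_+(G)$ and never leaves it. Concretely, one checks that the minimal distance $\ell$ defined in the proof of Theorem~\ref{thm:metric equiv} equals $1$ at every step (since the support stays on vertices and all edges have length $1$), so each firing is an integral move and $D^{k+1}$ remains vertex-supported whenever $D^k$ is.

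Finally, the \textbf{counting statement} follows formally: the bijection $\pi : \Sigma(G) \to \Pic^g(G)$ gives $|\Sigma(G)| = |\Pic^g(G)|$, and since $\Pic^g(G)$ is a torsor for $\Pic^0(G)$ whose cardinality equals the number of spanning trees of $G$ (as recalled just before the theorem statement), we conclude that the number of integral break divisors equals the number of spanning trees. Alternatively, this recovers the combinatorial fact that $\Sigma(G)$ and $\Pic^g(G)$ have the same size via Theorem~\ref{thm:mainthm1bis}, which is itself the specialization of Theorem~\ref{thm:mainthm1} to vertex-supported divisors.
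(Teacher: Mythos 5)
Your proof is correct and follows essentially the same route as the paper's (much terser) one, which likewise fixes $q \in V(G)$ and combines Theorem~\ref{thm:eqiv_q_orient} --- whose finite-graph chip-firing argument makes integrality automatic --- with the break-divisor/$q$-orientable correspondence of Lemma~\ref{lem:break_vs_qorient} and the uniqueness in Theorem~\ref{thm:mainthm1}. Your additional verification that the metric algorithm's firing distance $\ell$ equals $1$ at every step is sound but unnecessary, since running the procedure directly on the finite graph $G$, as Theorem~\ref{thm:eqiv_q_orient} does, keeps the computation in $\Div(G)$ by construction.
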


\begin{proof}
Choose $q \in V(G)$. Then the result follows from Theorem~\ref{thm:eqiv_q_orient}, which says that every element of $\Div^{g-1}(G)$ is linearly equivalent to an \emph{integral} divisor of the form $D_{\mathcal O}$ with ${\mathcal O}$
a $q$-connected orientation, together with the equivalence of Theorem~\ref{thm:mainthm1} and Theorem~\ref{thm:mainthm2}.
\end{proof}

\begin{remark}
One can interpret Theorem~\ref{thm:integralbreakdivisorthm} as follows:
although the set of spanning trees of $G$ is not canonically a torsor for $\Pic^0(G)$, the set $\Sigma(G)$ of integral break divisors is.
Fixing a vertex $q$ of $G$ gives a bijection between $\Pic^0(G)$ and $\Pic^g(G)$, and for a generic choice of $\lambda \in \Jac(\Gamma)$ there will be exactly one element of
$\Pic^g(G)+\lambda$ in each open cell $\C^\circ_T$.  Thus the pair $(q, \lambda)$ provides a bijection between elements of $\Pic^0(G)$ and spanning trees.
\end{remark}



\section{The dual Matrix-Tree Theorem}

\subsection{Weights of spanning trees and volumes}

Let $\Gamma$ be a metric graph and let the weighted graph $G$ be a model for $\Gamma$. Given any spanning tree $T$ of $G$ we define the \emph{weight} of $T$ to be the product of the lengths of all edges of $G$ {\bf not in} $T$:
\[
w(T) := \prod_{e \not \in E(T)}{\ell(e)} \ .
\]

We also define
\begin{equation} \label{wG}
w(G) := \sum_{T}{w(T)} \ ,
\end{equation}
the sum being over all spanning trees of $G$.
It is easy to check that $w(G)$ is invariant under refinement, and therefore depends only on the underlying metric graph $\Gamma$.
We will therefore also denote $w(G)$ by $w(\Gamma)$.

\medskip

Let $\Lambda$ be a \emph{lattice}, i.e., a free $\ZZ$-module of some rank $g$ equipped with a symmetric integer-valued bilinear form $\langle \cdot, \cdot \rangle$
whose corresponding quadratic form is positive definite on $\Lambda_{\RR} := \Lambda \otimes \RR$.
We denote by ${\rm Vol}(\Lambda)$ the volume of any fundamental domain for $\Lambda$, or equivalently, the volume of the the real torus $\Lambda_{\RR}/\Lambda$.
If $M$ is any {\rm Gram matrix} for $\Lambda$, i.e., the matrix $(\langle \lambda_i, \lambda_j \rangle)$, where $\lambda_1,\ldots,\lambda_g$ is a basis for $\Lambda$,
then it is well known that ${\rm Vol}(\Lambda)=\sqrt{\det(M)}$.

\medskip

Our goal in this section is to give a geometric proof, via the decomposition of $\Pic^g(\Gamma)$ into the cells $C_T$, of the following ``dual version'' of
Kirchhoff's Matrix-Tree Theorem.  

\begin{theorem}[Matrix-Tree Theorem, Dual Version] \label{thm:DualKirchhoff}
The volume of the real torus $\Jac(\Gamma)$ is ${\rm Vol}(\Jac(\Gamma)) = \sqrt{w(\Gamma)}$.
Equivalently, if $M$ is any Gram matrix for the cycle lattice $H_1(\Gamma,\ZZ)$ then $\det(M) = w(\Gamma)$.
\end{theorem}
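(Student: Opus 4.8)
The plan is to prove Theorem~\ref{thm:DualKirchhoff} by computing the volume of $\Pic^g(\Gamma)$ in two different ways and equating them. On the one hand, $\Pic^g(\Gamma)$ is a torsor for $\Jac(\Gamma)$ and carries the translation-invariant metric induced from the intersection form on $H_1(\Gamma,\ZZ)$, so $\Vol(\Pic^g(\Gamma)) = \Vol(\Jac(\Gamma))$. On the other hand, by Theorem~\ref{thm:mainthm1} the closed cells $\C_T = \pi(\Sigma_T)$ cover $\Pic^g(\Gamma)$, and the discussion following Lemma~\ref{lem:inj} shows their relative interiors $\C^\circ_T$ are pairwise disjoint (indeed $\C^\circ_T \cap \C^\circ_{T'} = \emptyset$ for $T \neq T'$). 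Since distinct cells meet only in positive codimension, volume is additive: $\Vol(\Jac(\Gamma)) = \sum_T \Vol(\C_T)$. Thus the crux is to compute $\Vol(\C_T)$ for a single spanning tree $T$.

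The second step is to identify $\C_T$ as a genuine parallelotope and compute its volume. Fixing the basepoint $q$, recall that $\Omega^\ast(\Gamma) \cong H_1(\Gamma,\RR)$ is the universal cover of $\Pic^g(\Gamma)$, and the Abel--Jacobi map $\pi$ restricted to a contractible piece factors through a piecewise-linear lift to this cover. By Lemma~\ref{lem:inj}, $\pi$ is injective on $\Sigma^\circ_T$, and the map is \emph{linear} on this cube (not merely piecewise-linear), so $\C^\circ_T$ is the image of the open cube $\prod_{e \notin E(T)} e^\circ$ under a single affine-linear map into $H_1(\Gamma,\RR)$. The edge $e \notin T$ of length $\ell(e)$ contributes an edge vector of the parallelotope; I would identify this vector with $\ell(e) \cdot [\gamma_e]$, where $\gamma_e \in H_1(\Gamma,\ZZ)$ is the fundamental cycle of $e$ with respect to $T$ (the unique cycle using $e$ and edges of $T$). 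The key point is that $\{[\gamma_e]\}_{e \notin E(T)}$ is precisely a $\ZZ$-basis for $H_1(\Gamma,\ZZ)$ — this is the standard fact that the non-tree edges index a basis of the cycle space. Therefore $\C_T$ is the parallelotope spanned by the vectors $\ell(e)[\gamma_e]$, and its Euclidean volume (with respect to the metric from the intersection form) is the absolute value of the determinant of the matrix of these vectors, which factors as $\left( \prod_{e \notin E(T)} \ell(e) \right) \cdot \Vol(\Lambda_T)$, where $\Lambda_T$ is the sublattice (in fact all of $H_1(\Gamma,\ZZ)$) spanned by the $[\gamma_e]$. Since $\{[\gamma_e]\}$ is a full $\ZZ$-basis, $\Vol(\Lambda_T) = \Vol(\Jac(\Gamma))$, giving
\[
\Vol(\C_T) = \frac{w(T)}{\Vol(\Jac(\Gamma))} \cdot \Vol(\Jac(\Gamma))^2 / \Vol(\Jac(\Gamma)),
\]
so more directly $\Vol(\C_T) = w(T) \cdot \Vol(\Jac(\Gamma))$ after normalizing the fundamental-cycle basis appropriately; I will track the normalization carefully to match the statement of Theorem~\ref{thm:mainthm3}.

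Combining the two steps gives $\Vol(\Jac(\Gamma)) = \sum_T \Vol(\C_T) = \Vol(\Jac(\Gamma)) \cdot \sum_T w(T)$ up to the volume normalization, which rearranges to $\Vol(\Jac(\Gamma))^2 = \sum_T w(T) = w(\Gamma)$, i.e.\ $\Vol(\Jac(\Gamma)) = \sqrt{w(\Gamma)}$. The equivalent Gram-matrix formulation $\det(M) = w(\Gamma)$ then follows immediately from the standard identity $\Vol(\Lambda) = \sqrt{\det(M)}$ applied to $\Lambda = H_1(\Gamma,\ZZ)$. I expect the main obstacle to be the bookkeeping in the second step: verifying that the restriction of $\pi$ to $\Sigma^\circ_T$ is affine-linear (so that $\C_T$ is truly a parallelotope and not merely a curvilinear image), and correctly identifying the edge vectors with the scaled fundamental cycles under the identification $\Omega^\ast(\Gamma) \cong H_1(\Gamma,\RR)$. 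Once the edge vectors are correctly pinned down, the determinant computation and the fact that fundamental cycles form a lattice basis are routine, and the positive-codimension overlap of distinct cells — already established via the homeomorphism $\Sigma \xrightarrow{\sim} \Pic^g(\Gamma)$ and the dimension count on intersections — ensures the volumes add without correction terms.
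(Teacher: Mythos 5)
Your global strategy is the paper's: cover $\Pic^g(\Gamma)$ by the cells $\C_T$, note that distinct cells meet only in positive codimension so volumes add, and reduce everything to computing $\Vol(\C_T)$ for a single spanning tree using the affine-linearity of $\pi$ on the cube $\Sigma^{o}_T$. The genuine gap sits exactly at the step you yourself flagged as ``bookkeeping'': the identification of the edge vectors of the parallelotope. They are \emph{not} $\ell(e)\cdot[\gamma_e]$. Moving a chip across a non-tree edge $e$ displaces the divisor class, under the identification $\Omega^{\ast}(\Gamma)\cong H_1(\Gamma,\RR)$, by the \emph{orthogonal projection} $\tilde{e}$ of $e\in C_1(\Gamma,\RR)$ onto $H_1(\Gamma,\RR)$; this is how the paper's proof of Proposition~\ref{volume} sets up the matrix $\Lf_T$. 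Since each fundamental cycle contains exactly one non-tree edge, one has $\langle \tilde{e}_i, \cb(T,e_j)\rangle = \langle e_i, \cb(T,e_j)\rangle = \ell(e_i)\,\delta_{ij}$ for non-tree edges $e_i,e_j$ (orientations chosen so that $+e_j$ appears in $\cb(T,e_j)$), so the vectors $\tilde{e}_i/\ell(e_i)$ form the basis of $H_1(\Gamma,\RR)$ \emph{dual} to the fundamental-cycle basis --- not the fundamental-cycle basis itself. This dual-lattice normalization is the whole content of the computation: it yields $\Vol(\C_T) = w(T)/\Vol(\Jac(\Gamma))$ (equivalently the paper's matrix identity $\Lf_T\Lf_T^{t} = \Df_T^2(\Cf_T\Cf_T^{t})^{-1}\Df_T^2$), and then additivity gives $\Vol(\Jac(\Gamma))^2 = w(\Gamma)$ exactly, with no loose ends.

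With your vectors the argument fails numerically. Take $\Gamma$ a circle with two vertices and edges $e_1,e_2$ of lengths $\ell_1,\ell_2$, and $T=\{e_2\}$, $\gamma = \cb(T,e_1)$: then $\tilde{e}_1 = \frac{\ell_1}{\ell_1+\ell_2}\gamma$ has length $\ell_1/\sqrt{\ell_1+\ell_2} = w(T)/\Vol(\Jac(\Gamma))$, whereas your vector $\ell_1\gamma$ has length $\ell_1\sqrt{\ell_1+\ell_2}$ --- too large by a factor of $\Vol(\Jac(\Gamma))^2$. Correspondingly, your claimed $\Vol(\C_T) = w(T)\cdot\Vol(\Jac(\Gamma))$, fed into the additivity step, gives $\Vol(\Jac(\Gamma)) = \Vol(\Jac(\Gamma))\sum_T w(T)$, i.e.\ $\sum_T w(T) = 1$, which is false; note also that this identity does not ``rearrange'' to $\Vol(\Jac(\Gamma))^2 = \sum_T w(T)$ as you assert, and the hedge ``up to the volume normalization'' is hiding precisely the theorem to be proved. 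Everything else in your write-up is sound: the fundamental cycles do form a $\ZZ$-basis of $H_1(\Gamma,\ZZ)$ (whence $\Vol(\Jac(\Gamma)) = \sqrt{\det(\Cf_T\Cf_T^{t})}$, which the paper also uses), the restriction of $\pi$ to $\Sigma^{o}_T$ is indeed affine, and the disjointness of the open cells is correctly quoted. So the repair is local: replace $\ell(e)[\gamma_e]$ by the projections $\tilde{e}$ and run your determinant computation through the duality relation above.
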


One can explicitly calculate a Gram matrix for $H_1(\Gamma,\ZZ)$ as follows.
Fix an arbitrary orientation of the model $G$ and a spanning tree $T$ of $G$.
For each $e \not \in T$, the \emph{fundamental cycle} $\cb(T,e)$ is the unique element of $H_1(G,\ZZ)$ contained in $T \cup e$.
It is well known that the set
\[
 \Cb (T):= \{\cb(T,e) : \, e \in E(G) \backslash E(T)\}
\]
of fundamental cycles associated to $T$ forms a basis for $H_1(\Gamma,\ZZ)$.

\medskip


\medskip

Let $m= |E(G)|$ and let $g=|E(G)|-|V(G)|+1$ be the rank of $H_1(\Gamma,\ZZ)$.  Let $\Cf_T$ denote the $g \times m$ matrix whose rows correspond to basis elements $\cb(T,e) \in  \Cb (T)$. For this, first fix a labeling $\{e_1, e_2, \cdots , e_m\}$ of $E(G)$. The $(i,j)$-entry of $\Cf_T$ is
\[
\begin{cases}
+\sqrt{\ell(e_j)} &\text{if $+e_j$ appears in $\cb(T,e_i)$} ;\\
-\sqrt{\ell(e_j)} &\text{if $-e_j$ appears in $\cb(T,e_i)$} ;\\
0 &\text{otherwise} .
\end{cases}
\]

Then $\Cf_T \Cf_T^t$ is a Gram matrix for $H_1(\Gamma,\ZZ)$ and consequently for any spanning tree $T$ we have
\begin{equation} \label{volJacDet}
{\rm Vol}(\Jac(\Gamma))=\sqrt{\det(\Cf_T\Cf_T^t)} \ .
\end{equation}

Alternatively let $\Df$ denote the $m \times m$ diagonal matrix whose $(i,i)$-entry is $\sqrt{\ell(e_i)}$. Then $\Cf_T=\Cf'_T \Df$, where $\Cf'_T$ is the matrix whose $(i,j)$-entry is
\[
\begin{cases}
+1 &\text{if $+e_j$ appears in $\cb(T,e_i)$} ;\\
-1 &\text{if $-e_j$ appears in $\cb(T,e_i)$} ;\\
0 &\text{otherwise} .
\end{cases}
\]

Fix an identification of $\Jac(\Gamma)$ with $\Pic^g(\Gamma)$ and let $\D_T$ be the cell in $\Jac(\Gamma)$ corresponding to the cell $C_T$ in $\Pic^g(\Gamma)$.
In order to prove Theorem~\ref{thm:DualKirchhoff}, it suffices to prove the following result:

\begin{proposition} \label{volume}
 ${\rm Vol}(\D_T ) = w(T) /\sqrt{\det(\Cf_T\Cf_T^t)} = w(T) / {\rm Vol}(\Jac(\Gamma))$ .
\end{proposition}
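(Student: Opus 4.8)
The plan is to compute the volume of the parallelotope $\D_T$ directly by identifying it as the image in $\Jac(\Gamma)$ of the cube $\Sigma_T = \prod_{e \not\in E(T)} \bar e$ under the Abel-Jacobi map, and then applying the standard formula relating the volume of a parallelotope to the Gram determinant of the vectors spanning it. The key observation is that $\Sigma_T$ is a product of intervals, one of length $\ell(e)$ for each $e \not\in E(T)$, so it is a cube whose edge in the $e$-direction has length $\ell(e)$. Since $\pi$ restricted to $\Sigma^o_T$ is an injective piecewise-linear map (by Lemma~\ref{lem:inj}), the cell $\D_T$ is the parallelotope spanned by the images of these edge-vectors in the universal cover $\Omega^\ast(\Gamma) = H_1(\Gamma,\RR)$.

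First I would make the affine structure explicit: fixing the basepoint $q$, the Abel-Jacobi map sends a path in $\Gamma$ along the edge $e$ (traversed by distance $t$) to a corresponding displacement in $H_1(\Gamma,\RR)$. The crucial point is to identify precisely which vector in $H_1(\Gamma,\RR)$ arises when we move a single chip a unit distance along the edge $e \not\in E(T)$: I claim this displacement vector is exactly the fundamental cycle $\cb(T,e)$ (up to sign/orientation), because moving a chip along $e$ while keeping the tree $T$ as the ``reference frame'' traces out precisely the loop $T \cup e$. Thus the parallelotope $\D_T$ is spanned by the vectors $\{\ell(e)\,\cb(T,e) : e \not\in E(T)\}$, or more precisely by the images of the edge-vectors, each of which is $\cb(T,e)$ scaled so that traversing the full edge of length $\ell(e)$ contributes the vector $\ell(e)$ times a unit-speed generator.

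Next I would compute the volume. The volume of the parallelotope spanned by vectors $v_e = \ell(e)\,\cb(T,e)$ is $\sqrt{\det(M)}$, where $M$ is the Gram matrix $\big(\langle v_e, v_{e'}\rangle\big)$ with respect to the intersection form on $H_1(\Gamma,\RR)$. This volume must then be measured relative to the fixed volume form on $\Jac(\Gamma)$ that normalizes the fundamental domain of the lattice $H_1(\Gamma,\ZZ)$; since the lattice covolume is $\sqrt{\det(\Cf_T\Cf_T^t)} = {\rm Vol}(\Jac(\Gamma))$ by~\eqref{volJacDet}, the normalized volume of $\D_T$ is the ratio of these two determinants. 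I expect the scaling factors $\ell(e)$ to factor out cleanly: pulling the diagonal matrix $\Df$ (restricted to the edges not in $T$) out of the Gram computation contributes exactly $\prod_{e \not\in E(T)} \ell(e) = w(T)$ to the numerator, matching the claimed formula, while the remaining combinatorial factor cancels against the lattice covolume through the factorization $\Cf_T = \Cf'_T \Df$.

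The main obstacle will be pinning down the precise relationship between the edge-displacement vectors and the fundamental cycles $\cb(T,e)$, together with the correct normalization of the volume form. One must verify that the piecewise-linear structure of $\pi$ genuinely sends the coordinate edge of $\Sigma_T$ in the $e$-direction to a segment parallel to $\cb(T,e)$ in $H_1(\Gamma,\RR)$; this requires unwinding the definition of the Abel-Jacobi map in terms of integration of harmonic $1$-forms and checking that the lattice used to normalize volumes is exactly $H_1(\Gamma,\ZZ)$ spanned by the $\cb(T,e)$. Once this identification is established, the volume computation reduces to the observation that the edge lengths appear as a diagonal scaling via $\Df$, so that $\det$ of the scaled Gram matrix is $w(T)^2 \cdot \det(\Cf'_T (\Cf'_T)^t)$ relative to the integral basis, and the stated formula ${\rm Vol}(\D_T) = w(T)/\sqrt{\det(\Cf_T\Cf_T^t)}$ follows directly.
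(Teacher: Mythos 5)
Your proposal contains a genuine error at its central step: the displacement in $H_1(\Gamma,\RR)$ produced by moving a chip along a non-tree edge $e$ is \emph{not} parallel to the fundamental cycle $\cb(T,e)$. The Abel--Jacobi displacement of a moving chip is obtained by integrating harmonic $1$-forms along the traversed segment, and under the identification $\Omega^{\ast}(\Gamma) \cong H_1(\Gamma,\RR)$ this functional corresponds to the \emph{orthogonal projection} $\tilde{e}$ of the chain $e \in C_1(\Gamma,\RR)$ onto $H_1(\Gamma,\RR)$. Your ``reference frame'' heuristic computes the difference of the endpoint classes \emph{modulo} $H_1(\Gamma,\ZZ)$ (where the tree path may be used, and the discrepancy $\cb(T,e)$ is a lattice vector), but the actual linear path traced in the universal cover has direction $\tilde{e}$, which only equals a cycle class when the chip traverses a closed loop. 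Concretely, on the theta graph with three unit-length parallel edges and $T=\{e_3\}$, one computes $\tilde{e}_1 = \tfrac{1}{3}(2e_1 - e_2 - e_3)$, while $\cb(T,e_1) = e_1 - e_3$; these are not proportional. The error is detected by a volume sanity check: your parallelotope spanned by the $\cb(T,e)$ (with unit lengths) has volume $\sqrt{\det(\Cf_T\Cf_T^t)} = \Vol(\Jac(\Gamma))$ \emph{per cell}, so the three cells of the theta graph would have total volume $3\sqrt{3}$ inside a torus of volume $\sqrt{3}$. The subsequent division by the lattice covolume that you introduce to rescue the formula is not a legitimate step: the volume form on $\Jac(\Gamma)$ is the one induced by the inner product on $H_1(\Gamma,\RR)$, and the Gram determinant already measures volume in that metric; no further normalization is available, and even with it your computation yields $w(T)$ rather than $w(T)/\Vol(\Jac(\Gamma))$.

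The correct mechanism, which is the paper's actual proof, is a \emph{duality} between the projected edges and the fundamental cycles rather than an identification: since a non-tree edge $e_i$ occurs in the fundamental cycle $\cb(T,e_j)$ if and only if $i=j$ (with coefficient $+1$ under suitable orientations), one has $\langle \tilde{e}_i, \cb(T,e_j)\rangle = \langle e_i, \cb(T,e_j)\rangle = \ell(e_i)\,\delta_{ij}$. In matrix form this gives $\Lf_T\Cf_T^t = \Df_T^2$, hence $\Lf_T = \Df_T^2(\Cf_T\Cf_T^t)^{-1}\Cf_T$ and $\Lf_T\Lf_T^t = \Df_T^2(\Cf_T\Cf_T^t)^{-1}\Df_T^2$, so that
\[
\Vol(\D_T) = \sqrt{\det(\Lf_T\Lf_T^t)} = \det(\Df_T)^2\big/\sqrt{\det(\Cf_T\Cf_T^t)} = w(T)/\Vol(\Jac(\Gamma)) \ .
\]
In other words, up to the scaling $\ell(e_i)$, the vectors $\tilde{e}_i$ form the basis \emph{dual} to the fundamental-cycle basis; it is the appearance of the inverse Gram matrix $(\Cf_T\Cf_T^t)^{-1}$ that places $\Vol(\Jac(\Gamma))$ in the denominator, whereas your approach, using the cycle basis itself, places it in the numerator. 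Your outline is otherwise structurally sound (reduce to a Gram determinant for the affine image of the cube $\Sigma_T$, factor out the edge lengths diagonally), and it becomes the paper's proof once $\cb(T,e)$ is replaced by $\tilde{e}$ and the spurious normalization is dropped.
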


\begin{proof}
Let $\tilde{e}$ denote the orthogonal projection of an oriented edge $e$ in $C_1(\Gamma, \RR)$ onto $H_1(\Gamma, \RR)$.
Then the volume of $\D_T$ is equal to the $\sqrt{\det(\Lf_T \Lf_T^{t})}$, where $\Lf_T$ is the $g \times m$ matrix whose rows correspond to the basis elements $\tilde{e}$ for $e \not \in T$.

Let $\Df_T$ denote the $g \times g$ diagonal matrix whose $(i,i)$-entry is $\sqrt{\ell(e_i)}$ for $e_i \not \in T$. Then $\Lf_T = \Df_T^2 (\Cf_T \Cf_T^{t})^{-1} \Cf_T$ since
\[
\Lf_T \Cf_T^{t} = \Df_T^2 (\Cf_T \Cf_T^{t})^{-1} (\Cf_T \Cf_T^{t}) = \Df_T^2 \ .
\]

Now we also have
\[
\Lf_T \Lf_T^{t}= \Df_T^2 (\Cf_T \Cf_T^{t})^{-1} (\Cf_T \Cf_T^{t}) (\Cf_T \Cf_T^{t})^{-1} \Df_T^2 =\Df_T^2  (\Cf_T \Cf_T^{t})^{-1} \Df_T^2
\]
and therefore
\[
{\rm Vol}(\D_T ) =
\sqrt{\det(\Df_T^2  (\Cf_T \Cf_T^{t})^{-1} \Df_T^2)}=\det(\Df_T)^2 / \sqrt{\det(\Cf_T\Cf_T^t)}= w(T) / \sqrt{\det(\Cf_T\Cf_T^t)} \ .
\]
\end{proof}

\subsection{Matroid duality}

We now explain the precise sense in which Theorem~\ref{thm:DualKirchhoff} is dual to the usual version of
Kirchhoff's Matrix-Tree Theorem.  In order to do this, we first give a slightly more canonical formulation of the latter.

\medskip

Given any spanning tree $T$ of $G$, we define the \emph{coweight} of $T$ to be the product of the lengths all edges of $G$ {\bf in} $T$:
\[
w'(T) := \prod_{e \in E(T)}{\ell^{-1}(e)}.
\]

We also define
\[
w'(G) := \sum_{T}{w'(T)}
\]
to be the sum of $w'(T)$ over all spanning trees of $G$.

Note that, unlike $w(G)$, the quantity $w'(G)$ is not invariant under refinement and is therefore \emph{not} an invariant of the metric graph $\Gamma$.

\begin{theorem}[Kirchhoff's Matrix-Tree Theorem, Canonical Version]
\label{thm:Kirch}
Let $B$ be the \emph{cocycle lattice} (or \emph{lattice of integer cuts}) of $G$ .
Then ${\rm Vol}(B)^2 = w'(G)$.
Equivalently, if $M'$ is any Gram matrix for $B$ then $\det(M')=w'(G)$.
\end{theorem}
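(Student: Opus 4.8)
The plan is to prove Theorem~\ref{thm:Kirch} by the same geometric strategy used for the dual version (Theorem~\ref{thm:DualKirchhoff}), but applied to the \emph{cocycle lattice} $B$ rather than the cycle lattice $H_1(\Gamma,\ZZ)$. Since the Gram matrix computation in Proposition~\ref{volume} is purely linear-algebraic, it adapts once we set up the right matrices. First I would fix an arbitrary orientation of $G$ and a spanning tree $T$, and recall that just as the fundamental cycles $\cb(T,e)$ for $e \not\in T$ form a basis for $H_1(G,\ZZ)$, the \emph{fundamental cuts} (or cocycles) $\cb^{*}(T,e)$ for $e \in T$ form a basis for the cocycle lattice $B$. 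Each fundamental cut is the signed set of edges crossing the partition of $V(G)$ determined by deleting $e$ from $T$; removing $e$ from $T$ splits the tree into two components, and the cut consists of all edges of $G$ joining them, with $e$ itself always appearing.

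Next I would introduce a matrix $\Bf_T$ analogous to $\Cf_T$, but now weighting each edge $e_j$ by $\sqrt{\ell^{-1}(e_j)}$ (matching the definition of $w'(T)$ as a product of inverse lengths), with rows indexed by the fundamental cuts $\cb^{*}(T,e)$ for $e \in T$. The key structural input is the standard orthogonality between cycles and cuts: with the correct length-dependent normalization, the cut space and the cycle space are orthogonal complements inside $C_1(G,\RR)$, and the cocycle lattice is exactly the intersection of $C_1(G,\ZZ)$ with the orthogonal complement of $H_1(G,\RR)$. This means $\Bf_T \Bf_T^{t}$ is a Gram matrix for $B$, so $\Vol(B) = \sqrt{\det(\Bf_T \Bf_T^{t})}$. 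The remaining step is then a determinant computation that exactly mirrors Proposition~\ref{volume}: projecting the standard basis vectors $e \in T$ orthogonally onto the cut space and computing the volume of the resulting fundamental parallelotope yields $\Vol(B)^2 = w'(T) \cdot \det(\Bf_T\Bf_T^t)^{-1}$-type identities, which sum over spanning trees to give $w'(G)$.

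Alternatively, and perhaps more cleanly, I would derive Theorem~\ref{thm:Kirch} from Theorem~\ref{thm:DualKirchhoff} by a formal matroid-duality argument. The cycle lattice and cocycle lattice are dual in the sense that reversing the roles of $T$ and its complement, together with inverting all edge lengths $\ell(e) \mapsto \ell^{-1}(e)$, interchanges the two situations: a spanning tree $T$ of $G$ has complement $E(G)\setminus E(T)$ which is the spanning tree of the dual matroid, and $w(T)$ (a product over $e \not\in T$) becomes $w'(T)$ (a product over $e \in T$) under length inversion. Applying Theorem~\ref{thm:DualKirchhoff} to the dual matroid with inverted lengths should then yield the stated formula directly. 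The main obstacle, and the step I would be most careful about, is verifying the precise orthogonality and normalization: one must check that with the $\sqrt{\ell^{-1}(e)}$ weighting the cut space is genuinely the orthogonal complement of $H_1(\Gamma,\RR)$ under the chosen inner product on $C_1(\Gamma,\RR)$, and that the lattice $B$ defined intrinsically (as $C_1(G,\ZZ)$ intersected with $H_1(\Gamma,\RR)^{\perp}$) coincides with the $\ZZ$-span of the fundamental cuts. Once the duality of the two linear-algebraic setups is established, the determinant manipulation is routine and identical in form to the proof of Proposition~\ref{volume}.
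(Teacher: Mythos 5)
You should first be aware that the paper contains no proof of Theorem~\ref{thm:Kirch} at all: it is presented as the classical Matrix--Tree Theorem in a ``canonical'' formulation, the only argument in its vicinity being the derivation of the usual version (Corollary~\ref{cor:Kirch}) \emph{from} it, via the observation that the reduced weighted Laplacian is a Gram matrix for $B$; Remark~\ref{CBRemark} then points to Cauchy--Binet for the classical proof. So you are supplying a proof where the paper supplies a citation, and the question is whether your proof is complete. Your first approach has the right skeleton --- the fundamental cuts $\cb^{*}(T,e)$, $e \in E(T)$, do form a $\ZZ$-basis of the integer cut lattice, and the matrix $\Bf_T$ with entries $\pm\sqrt{\ell^{-1}(e_j)}$ does give a Gram matrix for $B$ with its natural ($\ell^{-1}$-weighted) form --- but it breaks down at the crucial final step. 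Mirroring Proposition~\ref{volume} only computes the volume of a \emph{single} cell, giving an identity of the shape $\Vol(\text{cell}_T) = w'(T)/\Vol(B)$; the summation over spanning trees is not free. In the paper's proof of Theorem~\ref{thm:DualKirchhoff}, the statement that the cells $C_T$ tile all of $\Pic^g(\Gamma)$ is exactly Theorem~\ref{thm:mainthm1}, the hard combinatorial heart of the paper, and on the cocycle side you would need the analogous tiling theorem for the torus (cut space)$/B$, which neither the paper nor your proposal provides. Your displayed identity ``$\Vol(B)^2 = w'(T)\cdot\det(\Bf_T\Bf_T^t)^{-1}$'' cannot be right as stated, since the left side does not depend on $T$. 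The gap closes immediately, with no tiling needed, if you compute $\det(\Bf_T\Bf_T^t)$ directly by Cauchy--Binet: the maximal minors of the unweighted fundamental cut matrix are $\pm 1$ precisely on column sets that are spanning trees (total unimodularity), so $\det(\Bf_T\Bf_T^t) = \sum_{T'}\prod_{e\in T'}\ell^{-1}(e) = w'(G)$ --- which is the classical proof the paper alludes to. Your worry about the orthogonality normalization is also well founded and not merely a formality: cuts and cycles are orthogonal under the \emph{standard} inner product, not the $\ell$-weighted one; to place $B$ orthogonally to $H_1(\Gamma,\RR)$ inside the $\ell$-weighted $C_1(\Gamma,\RR)$ one must rescale an integer cut $u$ to $u/\ell$ (equivalently, use $\partial^{*}$), and the $\ell^{-1}$-weighted form on integer cuts is precisely what this rescaling induces --- this is why $w'$ involves inverse lengths. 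Asserting that ``with the correct normalization they are orthogonal complements'' is not yet a verification of it.

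Your second, ``cleaner'' route has a more serious defect: by Whitney's theorem the dual of a graphic matroid is graphic if and only if $G$ is planar, and Theorem~\ref{thm:DualKirchhoff} is a statement about metric graphs and their Jacobian tori, so it simply cannot be ``applied to the dual matroid'' of a non-planar $G$ --- there is no metric graph whose cycle lattice is the cocycle lattice of $G$. For planar $G$ the argument does work (the cocycle lattice of $G$ is the cycle lattice of the planar dual with lengths $\ell^{-1}(e)$, and the weight/coweight exchange is exactly as you describe), but in general one would have to redo the paper's geometric argument at the level of regular matroids, a genuinely new undertaking rather than a formal corollary. In summary: your setup and your identification of the delicate points are sound, but as written the proof is incomplete --- the summation step in the first approach needs either a cocycle-side tiling theorem or the direct Cauchy--Binet evaluation, and the duality shortcut is valid only for planar graphs.
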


If we fix a vertex $q$ of $G$, then the \emph{reduced Laplacian matrix} $Q' = Q_q$ obtained by deleting the row and column corresponding to $q$ in the usual
weighted Laplacian matrix for $G$ is the Gram matrix of the basis for $B$ consisting of $\partial^*(p)$ for vertices $p \neq q$, where $\partial^* : C^0(G,\ZZ) \to C^1(G,\ZZ)$
is adjoint to the usual boundary map $\partial : C_1(G,\ZZ) \to C_0(G,\ZZ)$.
We therefore obtain:

\begin{corollary}[Kirchhoff's Matrix-Tree Theorem, Usual Version]
\label{cor:Kirch}
Fix $q \in V(G)$ and let $Q'$ be the corresponding reduced Laplacian matrix.  Then $\det(Q')=w'(G)$.
\end{corollary}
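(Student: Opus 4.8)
The plan is to prove Corollary~\ref{cor:Kirch} by showing that the reduced Laplacian $Q' = Q_q$ is precisely a Gram matrix for the cocycle lattice $B$, and then invoking Theorem~\ref{thm:Kirch}, which gives $\det(M') = w'(G)$ for \emph{any} Gram matrix $M'$ of $B$. Thus the entire content of the corollary reduces to identifying the correct basis of $B$ whose Gram matrix is $Q'$.

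First I would recall the relevant maps. The boundary map $\partial : C_1(G,\ZZ) \to C_0(G,\ZZ)$ sends an oriented edge to (head) minus (tail), and its adjoint $\partial^* : C^0(G,\ZZ) \to C^1(G,\ZZ)$ is computed with respect to the inner products on $C_1$ and $C_0$ in which edges (resp.\ vertices) form orthogonal bases with $\langle e,e\rangle = \ell(e)$ appropriately built in. The cocycle lattice $B$ is by definition the image of $\partial^*$ (the lattice of integer cuts). I would verify that the vectors $\{\partial^*(p) : p \in V(G),\ p \neq q\}$ form a $\ZZ$-basis for $B$: the single linear relation $\sum_{p \in V(G)} \partial^*(p) = \partial^*(\mathbf{1}) = 0$ (since $\mathbf{1}$ is in the kernel of $\partial^*$, as $\partial$ annihilates no nonzero vertex function modulo this, and $G$ is connected) means that deleting exactly one vertex $q$ yields an independent spanning set over $\ZZ$.

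Next I would carry out the Gram matrix computation: the $(p,p')$-entry $\langle \partial^*(p), \partial^*(p')\rangle = \langle p, \partial\partial^*(p')\rangle$ equals the $(p,p')$-entry of the weighted Laplacian $L = \partial\partial^*$. Here the diagonal entry is the weighted degree (sum of edge-weights at $p$) and the off-diagonal entry is minus the weight of the edge joining $p$ and $p'$. Restricting $p,p'$ to $V(G)\setminus\{q\}$ gives exactly the reduced Laplacian $Q' = Q_q$. The weights must be chosen so that the Gram form matches; since the coweight uses $\ell^{-1}(e)$, I would take the inner product on $C_1$ to assign $\langle e, e\rangle = \ell^{-1}(e)$ (conductances), making $\partial^*(p) = \sum_{e \ni p} \pm \ell^{-1}(e)\, e$ and the diagonal of $L$ the total conductance at $p$; this reproduces the standard weighted Laplacian whose reduced version is $Q'$.

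The main obstacle will be pinning down the normalizations consistently: the precise inner products on $C_0$ and $C_1$, the sign/orientation conventions in $\partial$ and $\partial^*$, and confirming that with these choices the Gram matrix of the chosen basis is literally $Q'$ (rather than $Q'$ up to a diagonal rescaling). Once the normalization is fixed so that $\langle \partial^*(p), \partial^*(p')\rangle$ equals the reduced Laplacian entries, the corollary is immediate: $\det(Q') = \det(M') = \Vol(B)^2 = w'(G)$ by Theorem~\ref{thm:Kirch}. I would therefore spend the bulk of the argument on this bookkeeping and treat the final determinant identity as a direct substitution.
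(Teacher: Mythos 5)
Your proposal follows exactly the paper's route: the paper, too, proves the corollary in a single stroke by observing that $Q'$ is the Gram matrix of the basis $\{\partial^*(p) : p \neq q\}$ of the cocycle lattice $B$ and then citing Theorem~\ref{thm:Kirch}, so the content of your argument (basis verification via the relation $\sum_{p}\partial^*(p)=0$ on a connected graph, plus the computation $\langle \partial^*(p),\partial^*(p')\rangle = (\partial\partial^*)_{p,p'}$) is precisely the bookkeeping the paper leaves implicit. One correction to the bookkeeping you rightly flagged as the delicate point: your two stated conventions are inconsistent with each other. If you put the \emph{conductance} inner product $\langle e,e\rangle = \ell^{-1}(e)$ on $C_1$, then the adjoint is $\partial^*(p)=\sum_{e\ni p}\pm\,\ell(e)\,e$, and with your written formula $\partial^*(p)=\sum_{e\ni p}\pm\,\ell^{-1}(e)\,e$ the Gram entries would come out as sums of $\ell^{-3}(e)$, not the reduced Laplacian. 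The two consistent fixes are: (a) keep the \emph{length} inner product $\langle e,e\rangle=\ell(e)$ (the same one used for the cycle lattice in the dual theorem), under which the adjoint is indeed $\partial^*(p)=\sum_{e\ni p}\pm\,\ell^{-1}(e)\,e$ and the Gram matrix is the conductance-weighted Laplacian; or (b) work in cochains $C^1(G,\ZZ)$ as the paper does, where $\partial^*(p)=\sum_{e\ni p}\pm\,e^*$ is integral and the dual inner product $\langle e^*,e^*\rangle=\ell^{-1}(e)$ again yields $Q'$. With either normalization fixed, your final substitution $\det(Q')=\Vol(B)^2=w'(G)$ is exactly the paper's conclusion.
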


Theorem~\ref{thm:Kirch} is dual to Theorem~\ref{thm:DualKirchhoff} in the precise sense that it is obtained by interchanging the cycle lattice with the cocycle lattice and weights
with coweights.  As is well known, interchanging the cycles and cocycles in a graph is a special case of matroid duality.

\begin{remark} \label{CBRemark}
The classical linear-algebraic proof of Kirchhoff's Matrix-Tree Theorem is an application of the Cauchy-Binet formula.
One can also prove Theorem~\ref{thm:DualKirchhoff} via the Cauchy-Binet formula; we omit the details.
We note in addition that a generalization of the computations in \cite[Lemma 3.4]{KotaniSunada} \cite[Lemma 2]{ChenYe} to the setting of weighted graphs can
be used to prove that Kirchhoff's Matrix-Tree Theorem and its dual version are equivalent.
\end{remark}

\def\soft#1{\leavevmode\setbox0=\hbox{h}\dimen7=\ht0\advance \dimen7
  by-1ex\relax\if t#1\relax\rlap{\raise.6\dimen7
  \hbox{\kern.3ex\char'47}}#1\relax\else\if T#1\relax
  \rlap{\raise.5\dimen7\hbox{\kern1.3ex\char'47}}#1\relax \else\if
  d#1\relax\rlap{\raise.5\dimen7\hbox{\kern.9ex \char'47}}#1\relax\else\if
  D#1\relax\rlap{\raise.5\dimen7 \hbox{\kern1.4ex\char'47}}#1\relax\else\if
  l#1\relax \rlap{\raise.5\dimen7\hbox{\kern.4ex\char'47}}#1\relax \else\if
  L#1\relax\rlap{\raise.5\dimen7\hbox{\kern.7ex
  \char'47}}#1\relax\else\message{accent \string\soft \space #1 not
  defined!}#1\relax\fi\fi\fi\fi\fi\fi}
\begin{bibdiv}
\begin{biblist}

\bib{Amini}{article}{
      author={Amini, Omid},
       title={Reduced divisors and embeddings of tropical curves},
        date={2013},
        ISSN={0002-9947},
     journal={Trans. Amer. Math. Soc.},
      volume={365},
      number={9},
       pages={4851\ndash 4880},
      eprint={arXiv:1007.5364},
         url={http://dx.doi.org/10.1090/S0002-9947-2013-05789-3},
      review={\MR{3066772}},
}

\bib{Backman}{unpublished}{
      author={Backman, Spencer},
       title={Riemann-{R}och theory for graph orientations},
        date={2014},
        note={available at
  \href{http://arxiv.org/abs/1401.3309}{ar{X}iv:1401.3309}, 27 pages},
}

\bib{BakerFaber11}{article}{
      author={Baker, Matthew.},
      author={Faber, Xander},
       title={Metric properties of the tropical {A}bel-{J}acobi map},
        date={2011},
        ISSN={0925-9899},
     journal={J. Algebraic Combin.},
      volume={33},
      number={3},
       pages={349\ndash 381},
      eprint={arXiv:0905.1679},
         url={http://dx.doi.org/10.1007/s10801-010-0247-3},
      review={\MR{2772537 (2012c:14124)}},
}

\bib{FarbodMatt12}{article}{
      author={Baker, Matthew},
      author={Shokrieh, Farbod},
       title={Chip-firing games, potential theory on graphs, and spanning
  trees},
        date={2013},
        ISSN={0097-3165},
     journal={J. Combin. Theory Ser. A},
      volume={120},
      number={1},
       pages={164\ndash 182},
      eprint={arXiv:1107.1313},
         url={http://dx.doi.org/10.1016/j.jcta.2012.07.011},
      review={\MR{2971705}},
}

\bib{ChenYe}{article}{
      author={Chen, Sheng},
      author={Ye, Sheng~Kui},
       title={Critical groups for homeomorphism classes of graphs},
        date={2009},
        ISSN={0012-365X},
     journal={Discrete Math.},
      volume={309},
      number={1},
       pages={255\ndash 258},
         url={http://dx.doi.org/10.1016/j.disc.2007.12.048},
      review={\MR{2479445 (2010a:05095)}},
}

\bib{GathmannKerber}{article}{
      author={Gathmann, Andreas},
      author={Kerber, Michael},
       title={A {R}iemann-{R}och theorem in tropical geometry},
        date={2008},
        ISSN={0025-5874},
     journal={Math. Z.},
      volume={259},
      number={1},
       pages={217\ndash 230},
      eprint={arXiv:math/0612129},
         url={http://dx.doi.org/10.1007/s00209-007-0222-4},
      review={\MR{2377750 (2009a:14014)}},
}

\bib{Hakimi}{article}{
      author={Hakimi, Seifollah~Louis},
       title={On the degrees of the vertices of a directed graph},
        date={1965},
        ISSN={0016-0032},
     journal={J. Franklin Inst.},
      volume={279},
       pages={290\ndash 308},
      review={\MR{0180501 (31 \#4736)}},
}

\bib{Hladkyetal}{article}{
      author={Hladk{\'y}, Jan},
      author={Kr{\'a}\soft{l}, Daniel},
      author={Norine, Serguei},
       title={Rank of divisors on tropical curves},
        date={2013},
        ISSN={0097-3165},
     journal={J. Combin. Theory Ser. A},
      volume={120},
      number={7},
       pages={1521\ndash 1538},
      eprint={arXiv:0709.4485},
         url={http://dx.doi.org/10.1016/j.jcta.2013.05.002},
      review={\MR{3092681}},
}

\bib{KotaniSunada}{article}{
      author={Kotani, Motoko},
      author={Sunada, Toshikazu},
       title={Jacobian tori associated with a finite graph and its abelian
  covering graphs},
        date={2000},
        ISSN={0196-8858},
     journal={Adv. in Appl. Math.},
      volume={24},
      number={2},
       pages={89\ndash 110},
         url={http://dx.doi.org/10.1006/aama.1999.0672},
      review={\MR{1748964 (2002d:14068)}},
}

\bib{Luoye}{article}{
      author={Luo, Ye},
       title={Rank-determining sets of metric graphs},
        date={2011},
        ISSN={0097-3165},
     journal={J. Combin. Theory Ser. A},
      volume={118},
      number={6},
       pages={1775\ndash 1793},
      eprint={arXiv:0906.2807},
         url={http://dx.doi.org/10.1016/j.jcta.2011.03.002},
      review={\MR{2793609 (2012d:05122)}},
}

\bib{MK08}{incollection}{
      author={Mikhalkin, Grigory},
      author={Zharkov, Ilia},
       title={Tropical curves, their {J}acobians and theta functions},
        date={2008},
   booktitle={Curves and abelian varieties},
      series={Contemp. Math.},
      volume={465},
   publisher={Amer. Math. Soc.},
     address={Providence, RI},
       pages={203\ndash 230},
         url={http://dx.doi.org/10.1090/conm/465/09104},
        note={available at
  \href{http://arxiv.org/abs/math/0612267}{ar{X}iv:math/0612267}},
      review={\MR{2457739 (2011c:14163)}},
}

\bib{Schrijver}{book}{
      author={Schrijver, Alexander},
       title={Combinatorial optimization. {P}olyhedra and efficiency. {V}ol.
  {B}},
      series={Algorithms and Combinatorics},
   publisher={Springer-Verlag},
     address={Berlin},
        date={2003},
      volume={24},
        ISBN={3-540-44389-4},
        note={Matroids, trees, stable sets, Chapters 39--69},
      review={\MR{1956925 (2004b:90004b)}},
}

\end{biblist}
\end{bibdiv}

\end{document}